\documentclass[Threeside,11pt]{article}
\usepackage{etoolbox}
\makeatletter
\patchcmd{\thebibliography}
{\list}
{\small          
	\setstretch{1.2}
	\list}
{}{}
\makeatother
\usepackage{setspace}
\usepackage{tikz}
\usepackage{float}
\definecolor{myPurple}{RGB}{109,90,207}
\definecolor{myGreen}{RGB}{4,130,67}
\definecolor{myGold}{RGB}{253,177,71}
\definecolor{myBurgundy}{RGB}{63,1,44}
\definecolor{myTeal}{RGB}{8,119,127}
\definecolor{myCream}{RGB}{255,216,177}
\definecolor{myOrange}{RGB}{225,119,1}
\usetikzlibrary{arrows.meta,bending,calc}
\tikzset{
	ball3Donly/.style={
		circle, minimum size=1.4cm,
		shading=ball,
		ball color=myPurple,
		draw=myPurple
	}
}
\pgfmathsetmacro{\wNear}{2.0pt}
\pgfmathsetmacro{\wMid}{1.3pt}
\pgfmathsetmacro{\wFar}{0.7pt}
\pgfmathsetmacro{\bend}{6mm}
\usepackage[latin1]{inputenc}
\usepackage[english]{babel}
\usepackage{amsmath}
\usepackage{amsthm}
\usepackage{algorithm}
\usepackage{cite}
\usepackage{amsfonts}
\usepackage{amssymb}
\usepackage{mathrsfs}
\usepackage{graphicx}
\usepackage{colortbl,dcolumn}
\usepackage{marvosym}
\usepackage{ifsym}
\usepackage{paralist}
\usepackage{xcolor}
\usepackage{subcaption}
\usepackage{adjustbox}
\usepackage{enumitem}
\usepackage{booktabs,tabularx}
\usepackage[colorlinks,citecolor=blue,
linkcolor=blue,hypertexnames=false]{hyperref} 
\allowdisplaybreaks
\DeclareFontFamily{U}{mathx}{\hyphenchar\font45}
\DeclareFontShape{U}{mathx}{m}{n}{
	<5> <6> <7> <8> <9> <10>
	<10.95> <12> <14.4> <17.28> <20.74> <24.88>
	mathx10
}{}
\DeclareSymbolFont{mathx}{U}{mathx}{m}{n}
\DeclareMathAccent{\widecheck}{0}{mathx}{"71}
\usepackage{graphicx}
\usepackage{caption}
\usepackage{amsthm}

\newtheorem{lemma}{\bf Lemma}[section]
\newtheorem{mainthm}{Theorem}

\newtheorem{remark}{\bf Remark}[section]

\numberwithin{equation}{section}

\newtheorem{innercustomrem}{Remark}
\newenvironment{customrem}[1]
{\renewcommand\theinnercustomrem{#1}\innercustomrem}
{\endinnercustomrem}

\definecolor{PlanarRevisionGreen}{RGB}{0,100,50}

\begin{document}

\title{{\sl Complete characterization of planar $C^1$ linearizability and optimal conjugacy regularity}}
	
	\author{Zhicheng Tong\thanks{School of Mathematics, Jilin University, Changchun 130012, P. R. China. Email: \url{tongzc25@jlu.edu.cn}} 
		\and 
		Yong Li\thanks{Corresponding author. School of Mathematics, Jilin University, Changchun 130012, P. R. China; Center for Mathematics and Interdisciplinary Sciences, Northeast Normal University, Changchun 130024, P. R. China. Email: \url{liyong@jlu.edu.cn}}
	}
	
	\renewcommand{\thefootnote}{}
	
	\date{}
	\maketitle

	\begin{abstract}
We establish a complete characterization of local $C^1$ linearizability for planar diffeomorphisms. For every prescribed hyperbolic real Jordan form, we obtain necessary and sufficient integrability conditions on the modulus of continuity of the derivative for local $C^1$ linearizability. These criteria reduce to three canonical thresholds: the classical, polynomially weighted, and squared-logarithmically weighted Dini conditions. Each threshold is optimal: whenever the corresponding condition fails, we construct a diffeomorphism with the identical linear part that is not locally $C^1$ linearizable. For every non-hyperbolic linear part, explicit polynomial counterexamples show that smoothness alone cannot guarantee even local topological linearization. Beyond $C^1$ existence, we determine the optimal regularity for the derivative of the linearizing conjugacy in every hyperbolic case, with the optimality established by counterexamples.
		\\
		\\
		{\bf Keywords:} $C^1$ linearizability, planar diffeomorphism, modulus of continuity, complete characterization, weighted Dini conditions, optimal regularity beyond $C^1$\vspace{2mm}
		\\
		\noindent{\bf 2020 Mathematics Subject Classification:} 37C15, 37D05, 37E30, 37C05
	\end{abstract}
	
	\tableofcontents
	
	\section{Introduction}\label{SEC1}
	\setcounter{footnote}{0}
	\renewcommand{\thefootnote}{{\arabic{footnote}}}

	\subsection{Background}
	Preserving finer dynamical properties under linearization is a longstanding problem dating back to the work of Poincar\'e. This naturally motivates the central question of lowering the required regularity of the system while achieving higher regularity for the linearizing conjugacy, which has repeatedly been raised explicitly along this line; see, for instance, Hartman \cite{Har64}, Stowe \cite{Sto86}, Guysinsky--Hasselblatt--Rayskin~\cite{GHR03}, Newhouse \cite{New17}, and  Lu--Zhang--Zhang \cite{LZZ20}. The decisive problem to be settled in this direction is:
	\begin{center}
		\textit{\textbf{What minimal regularity of the system guarantees $C^1$ linearizability?}}
	\end{center}		
	
	Linearization is a foundational problem in the local theory of dynamical systems: near a stationary state, the principal aim is to replace a nonlinear mapping by its derivative, thereby reducing the local orbit geometry and phase portrait to spectral data. In the hyperbolic regime, the celebrated Hartman--Grobman theorem achieves this reduction at the topological level, asserting that any $C^1$ diffeomorphism is locally conjugate to its linear part near a hyperbolic fixed point via a homeomorphism \cite{Gro59,Har64,Pug69}.

	In many applications, however, topological equivalence is far from adequate \cite{GHR03}. A $C^1$ conjugacy is substantially more informative, preserving tangent directions, differentiable invariant manifolds, and first-order geometric structures that are lost under homeomorphisms. Such $C^1$ linearization plays an essential role across a broad spectrum of problems, ranging from homoclinic bifurcations and tangencies \cite{Den89,AAIS99,HW99}, mixing of hyperbolic flows \cite{FMT07}, and the dynamics of Lorenz attractors \cite{HM07}, to semilinear elliptic equations \cite{Bud89,DFG10}, entropy rigidity for Anosov flows \cite{DLVY20}, and length-spectrum rigidity in billiards \cite{HKS18,CKZ23}.

	The passage from $C^0$ to $C^1$ conjugacy is nevertheless delicate: even very simple hyperbolic mappings may be $C^0$, but not $C^1$, conjugate to their linear parts. The analytic theory was initiated by Poincar\'e \cite{Poi90}, while $C^r$ ($r>1$) linearization was developed systematically by Sternberg \cite{Ste57b,Ste58}. Important $C^r$ linearization and spectral-gap results were obtained by Belitskii \cite{Bel73} and Sell \cite{Sel85}, and extensions to Banach spaces were developed independently by ElBialy \cite{ElB01} and Rodrigues--Sol\`a-Morales \cite{RS04}. Van Strien \cite{van90} pioneered the investigation of $C^0$ linearizations that are differentiable at the fixed point, while sharp regularity problems for hyperbolic diffeomorphisms were addressed by Stowe \cite{Sto86} and Zhang--Zhang--Jarczyk \cite{ZZJ14}. In general, this broad theory is governed by an intricate interplay among nonresonance conditions, spectral separation, and regularity assumptions. Over the past decades, smooth linearization\footnote{Not necessarily $C^1$, but at least differentiable at the fixed point.} has been extended to nonautonomous and random dynamics by Dragi\v{c}evi\'c--Zhang--Zhang \cite{DZZ19,DZZ20}, Lu--Zhang--Zhang \cite{LZZ20}, and Zhang--Lu--Zhang \cite{ZLZ23}. In the deterministic setting, this line of inquiry has also witnessed continuous advances; see, for instance, Newhouse \cite{New17}, Casta\~neda--Jara \cite{CJ24},  Eynard-Bontemps--Navas \cite{EN25}, Lu--Xia \cite{LX25}, Lu--Xia--Zhang \cite{LXZ25}, Casta\~neda--Huerta--Robledo \cite{CHR26}, Lu--Xia--Zhang--Zhang \cite{LXZZ26}, and references therein.

	However, the optimal regularity condition ensuring $C^1$ linearizability remains elusive and continues to draw considerable interest. For instance, Guysinsky--Hasselblatt--Rayskin~\cite{GHR03} underscored the limitations of purely topological conjugacies, motivating the question of whether the smoothness of the linearizing conjugacy can be improved. Along this line, Newhouse~\cite{New17} raised the fundamental problem of whether one can ``\textit{reduce the regularity assumption on $T$ and still get a $C^1$ linearization.}''  More recently, Lu--Zhang--Zhang~\cite{LZZ20} explicitly emphasized that a basic problem in this theory is to find optimal regularity conditions under which a diffeomorphism admits a $C^1$ linearization. While questions of this nature have a long history, their explicit formulation in terms of sharp regularity is a more recent development.

	Nevertheless, a definitive picture of  $C^1$ linearizability has remained out of reach, even in dimension two. In dimension one, at a hyperbolic fixed point, $C^1$ regularity alone
	does not guarantee local $C^1$ linearizability \cite{Ste57a}, whereas
	$C^{1,\alpha}$ regularity suffices for every $\alpha\in(0,1]$
	\cite{Kuc68}. The \textit{plane} is therefore the minimal setting where distinct spectral rates and nontrivial Jordan blocks give rise to new regularity thresholds.
	\subsection{Complete Characterization of Planar $C^1$ Linearizability}
	
	\subsubsection{Classification of Fixed Points in the Plane and Motivations}
	Let $O=(0,0)^\top$ denote the origin in $\mathbb{R}^2$, and endow $\mathbb{R}^2$ (and more generally $\mathbb{R}^n$) with the standard supremum norm $|\cdot|$ unless explicitly stated otherwise. Following Arnold's terminology for mappings \cite[Section~25.A, p.~192]{Arn88}, for a planar diffeomorphism $F:(\mathbb{R}^2,O)\to(\mathbb{R}^2,O)$ the eigenvalues of the derivative $\Lambda:=DF(O)\in\mathrm{GL}(2,\mathbb{R})$ fall into one of the following cases. Moreover, upon a suitable linear change of coordinates, $\Lambda$ can be brought into the corresponding real Jordan canonical form:
	\begin{enumerate}[label=(\Alph*), ref=(\Alph*), leftmargin=*]
		\setcounter{enumi}{15}
		\item\label{case-P} The eigenvalues belong to the
		\textit{Poincar\'e domain}, meaning that they both lie strictly
		inside or strictly outside the unit circle $S^1$ (the fixed point being a hyperbolic attractor or a hyperbolic repeller). Since a repeller becomes an
		attractor upon passing to the inverse mapping, it suffices to
		consider the contracting case. According to the real Jordan
		canonical form, this case splits into the following three mutually
		exclusive subcases (all parameters are understood to be real-valued):
		\begin{enumerate}[
			label=\textup{($\mathrm{P}_{\arabic*}$)},
			ref=\textup{($\mathrm{P}_{\arabic*}$)},
			leftmargin=*
			]
			\item\label{case-P-attractor}
			$\Lambda$ is diagonalizable over $\mathbb{R}$:
			\[
			\Lambda
			=
			\begin{pmatrix}
				\lambda_1 & 0\\
				0 & \lambda_2
			\end{pmatrix},
			\quad
			0<|\lambda_1|\leqslant|\lambda_2|<1,
			\]
			which corresponds to a hyperbolic attractor
			with a real diagonalizable linear part; see Figure~\ref{fig:fixed-point-classification}\subref{fig:phase-P1};
			
			\item\label{case-P-focus}
			$\Lambda$ has a pair of nonreal conjugate eigenvalues:
			\[
			\Lambda
			=
			\begin{pmatrix}
				\lambda\cos\theta & -\lambda\sin\theta\\
				\lambda\sin\theta & \lambda\cos\theta
			\end{pmatrix},
			\quad
			0<\lambda<1,\quad
			\theta\notin\pi\mathbb{Z},
			\]
			which corresponds to a hyperbolic attracting focus;  see Figure~\ref{fig:fixed-point-classification}\subref{fig:phase-P2};
			
			\item\label{case-P-degenerate-node}
			$\Lambda$ admits a repeated real eigenvalue and fails to be diagonalizable:
			\[
			\Lambda
			=
			\begin{pmatrix}
				\lambda & 1\\
				0 & \lambda
			\end{pmatrix},
			\quad
			0<|\lambda|<1,
			\]
			which corresponds to a hyperbolic degenerate node;  see Figure~\ref{fig:fixed-point-classification}\subref{fig:phase-P3}.
		\end{enumerate}
		
		\setcounter{enumi}{18}
		\item\label{case-S}
		The eigenvalues belong to the
		\textit{Siegel domain}, understood here as the complement of the
		Poincar\'e domain among all pairs of nonzero eigenvalues.
		Thus, either one eigenvalue lies strictly inside the unit circle
		$S^1$ and the other strictly outside $S^1$, or at least one
		eigenvalue lies on $S^1$.
		According to the real Jordan canonical form, this case splits
		into the following five mutually exclusive subcases
		(all parameters are understood to be real-valued):
		\begin{enumerate}[
			label=\textup{($\mathrm{S}_{\arabic*}$)},
			ref=\textup{($\mathrm{S}_{\arabic*}$)},
			leftmargin=*
			]
			\item\label{case-S-saddle}
			$\Lambda$ has one eigenvalue strictly inside $S^1$
			and the other strictly outside $S^1$:
			\[
			\Lambda
			=
			\begin{pmatrix}
				\lambda_1 & 0\\
				0 & \lambda_2
			\end{pmatrix},
			\quad
			0<|\lambda_1|<1<|\lambda_2|,
			\]
			which corresponds to a hyperbolic saddle point;  see  Figure~\ref{fig:fixed-point-classification}\subref{fig:phase-S1};
			
			\item\label{case-S-mixed}
			$\Lambda$ has exactly one eigenvalue on $S^1$:
			\[
			\Lambda
			=
			\begin{pmatrix}
				\lambda_1 & 0\\
				0 & \lambda_2
			\end{pmatrix},\quad 
			\lambda_1\in\{-1,1\},
			\quad
			\lambda_2\neq 0,
			\quad
			|\lambda_2|\neq 1,
			\]
			which corresponds to a non-hyperbolic fixed point; see  Figure~\ref{fig:fixed-point-classification}\subref{fig:phase-S2};
			
			\item\label{case-S-real-unit}
			$\Lambda$ is diagonalizable over $\mathbb{R}$
			and both eigenvalues lie on $S^1$:
			\[
			\Lambda
			=
			\begin{pmatrix}
				\lambda_1 & 0\\
				0 & \lambda_2
			\end{pmatrix},
			\quad
			(\lambda_1,\lambda_2)
			\in\{(1,1),(1,-1),(-1,-1)\},
			\]
			which corresponds to a non-hyperbolic fixed point; see  Figure~\ref{fig:fixed-point-classification}\subref{fig:phase-S3};
			
			\item\label{case-S-elliptic}
			$\Lambda$ has a pair of nonreal conjugate eigenvalues
			on $S^1$:
			\[
			\Lambda
			=
			\begin{pmatrix}
				\cos\theta & -\sin\theta\\
				\sin\theta & \cos\theta
			\end{pmatrix},
			\quad
			0<\theta<\pi,
			\]
			which corresponds to a non-hyperbolic fixed point; see  Figure~\ref{fig:fixed-point-classification}\subref{fig:phase-S4};
			
			\item\label{case-S-unit-jordan}
			$\Lambda$ admits a repeated real eigenvalue on $S^1$
			and fails to be diagonalizable:
			\[
			\Lambda
			=
			\begin{pmatrix}
				\lambda & 1\\
				0 & \lambda
			\end{pmatrix},
			\quad
			\lambda\in\{-1,1\},
			\]
			which corresponds to a non-hyperbolic fixed point; see  Figure~\ref{fig:fixed-point-classification}\subref{fig:phase-S5}.
		\end{enumerate}
	\end{enumerate}

	\textit{The main purpose of the present paper is to establish a complete
		characterization of the regularity assumptions guaranteeing local $C^1$
		linearizability for all these cases, thereby providing a definitive
		resolution of this longstanding problem in dimension two.}
	In dimensions one and two, our results settle the questions raised
	in~\cite{GHR03,New17,LZZ20}. Under the corresponding $C^1$ linearizability criteria,
\textit{we further determine the optimal uniform moduli of continuity of the conjugacy derivatives in every hyperbolic case.}
See Tables~\ref{tab:planar-linearization-summary},
\ref{tab:one-dimensional-linearization-summary},
and~\ref{tab:linearization-sharp-regularity} in Section~\ref{seccon}
for an overview of the complete linearizability criteria and the optimal conjugacy regularity.

	Here and throughout, this characterization is understood in a uniform sense with respect to a prescribed linear part. For each hyperbolic real Jordan form, the corresponding integrability condition guarantees local $C^1$ linearizability for every mapping in the regularity class under consideration; conversely, if the condition fails, there exists a counterexample with that same linear part. In other words, the linear part---including the eigenvalues (their signs or rotation angles) and the Jordan block structure---is fixed prior to the construction of the counterexample. In the non-hyperbolic case, a real-analytic counterexample exists for every prescribed linear part.

	\begin{figure}[p]
		\centering
	
		\begin{adjustbox}{max totalsize={\textwidth}{0.95\textheight}}
			\begin{minipage}{\textwidth}
				\centering
				\setlength{\parindent}{0pt}
				\setlength{\parskip}{0pt}
				\captionsetup{font=small,skip=5pt}
				\captionsetup[subfigure]{
					font=footnotesize,
					labelfont=bf,
					justification=centering,
					singlelinecheck=false,
					skip=2pt
				}

				\newcommand{\FixedPointPanel}[4]{
					\begin{subfigure}[t]{0.315\linewidth}
						\centering
						\includegraphics[
						width=\linewidth,
						height=0.25\textheight,
						keepaspectratio
						]{#1}
						\caption{Case~\ref{#3}: #2}
						\label{#4}
					\end{subfigure}
				}

				\IfFileExists{P1_phase_portrait.png}
				{\def\PonePortraitFile{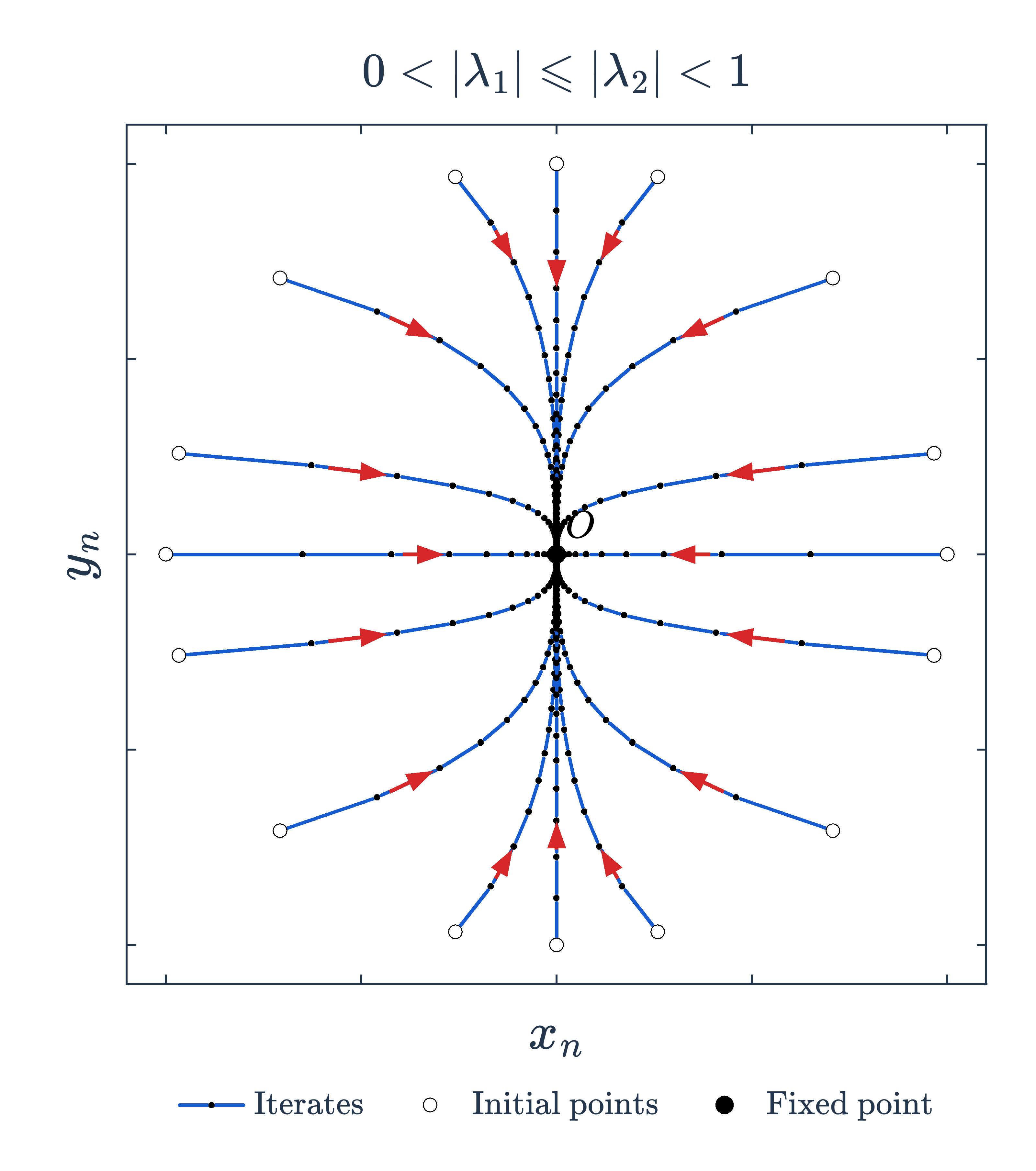}}
				{\def\PonePortraitFile{P1_attracting_node_v2.png}}
				
				\FixedPointPanel{\PonePortraitFile}
				{Hyperbolic attractor with a real diagonalizable linear part}
				{case-P-attractor}{fig:phase-P1}
				\hfill
				\FixedPointPanel{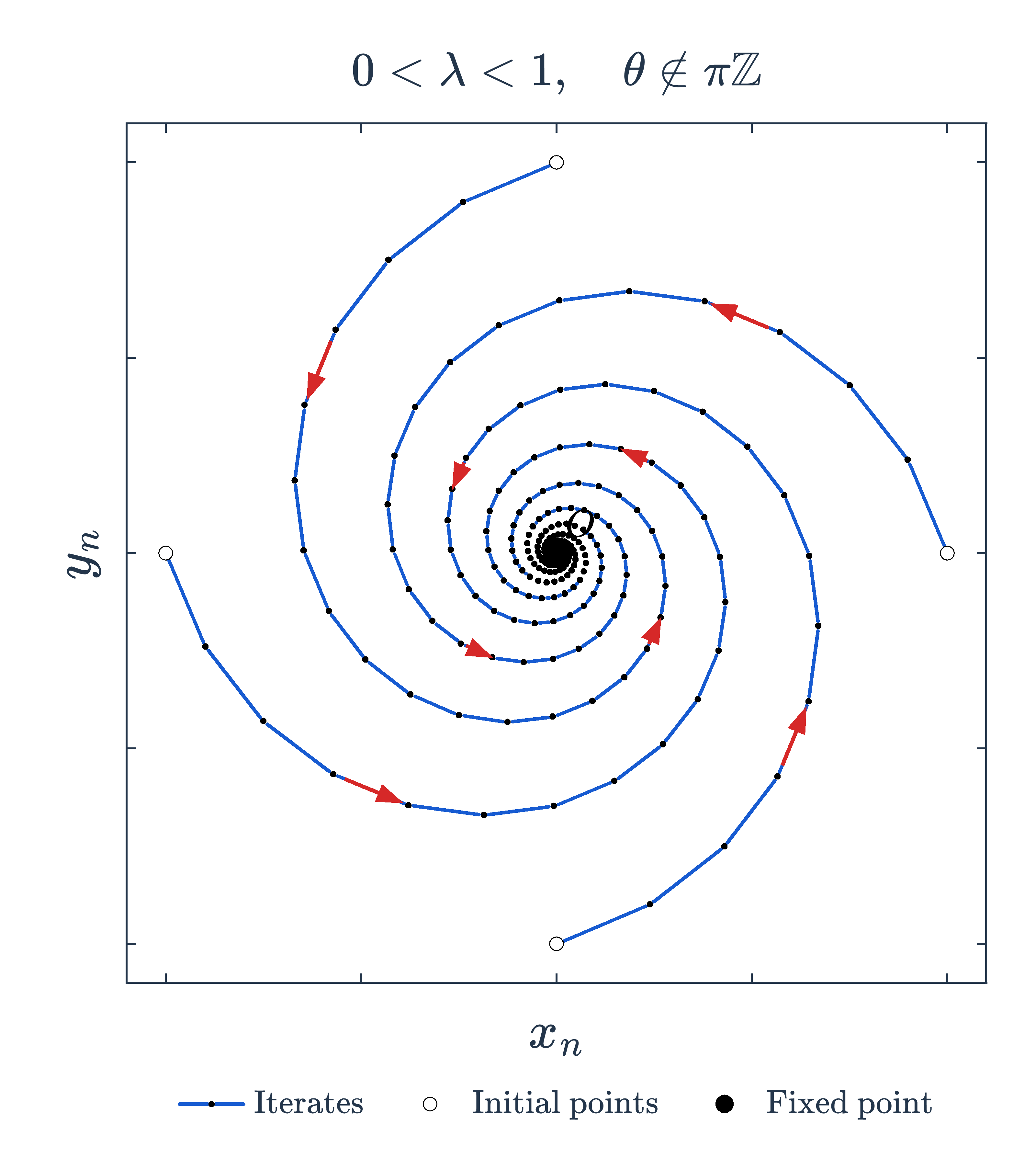}
				{Hyperbolic attracting focus}
				{case-P-focus}{fig:phase-P2}
				\hfill
				\FixedPointPanel{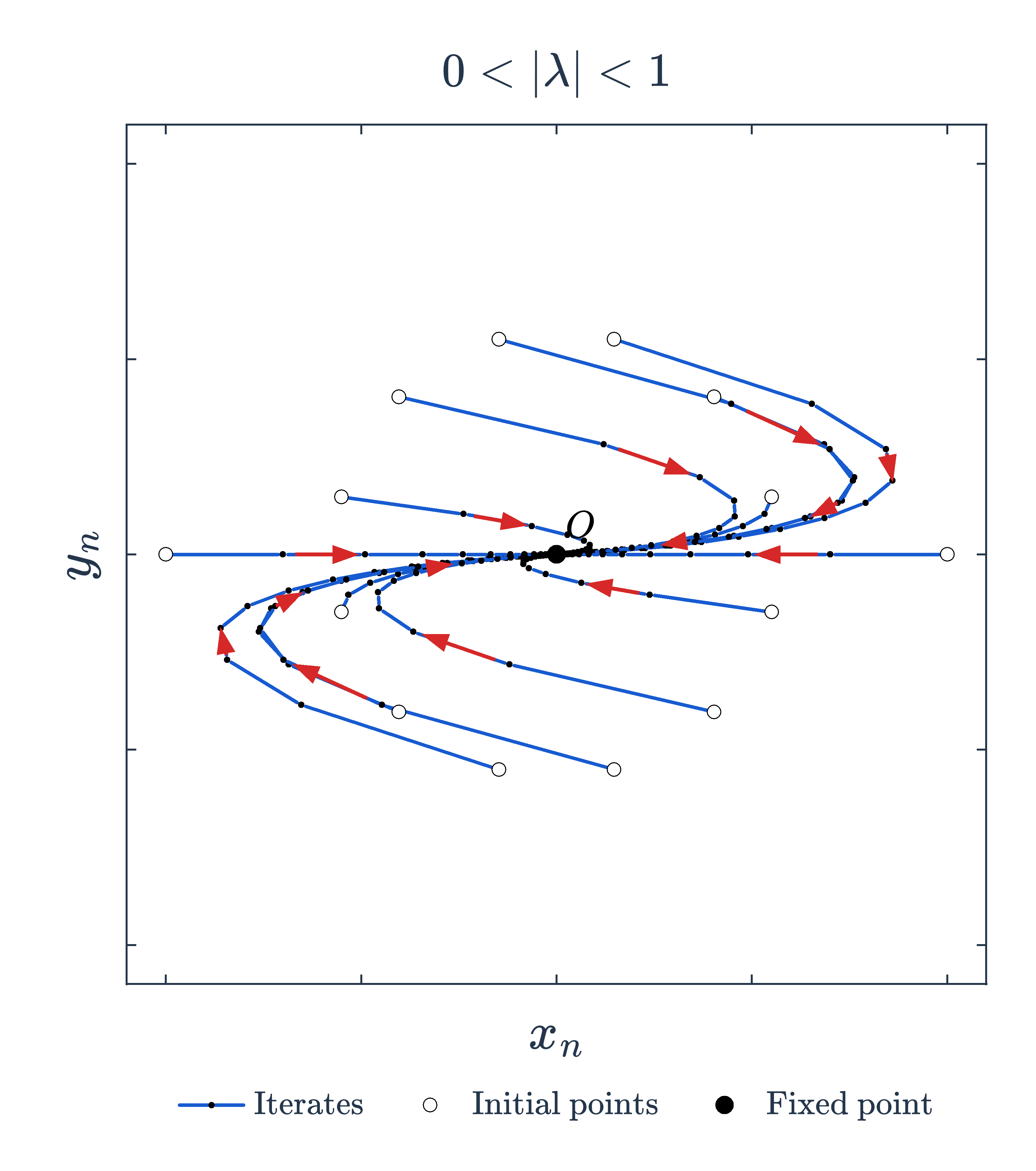}
				{Hyperbolic degenerate node}
				{case-P-degenerate-node}{fig:phase-P3}
				
				\par\vspace{4pt}
				
				\FixedPointPanel{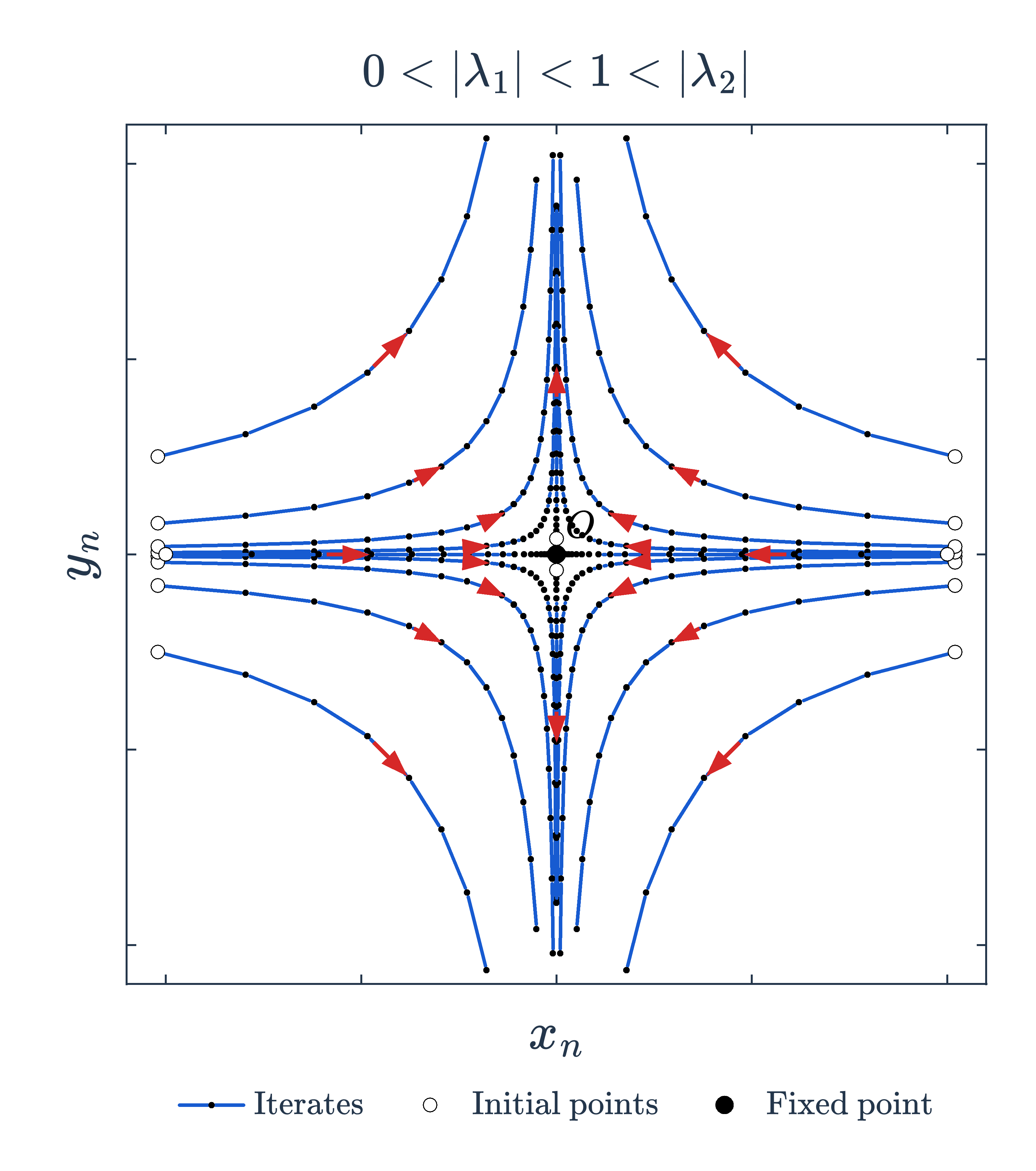}
				{Hyperbolic saddle point}
				{case-S-saddle}{fig:phase-S1}
				\hfill
				\FixedPointPanel{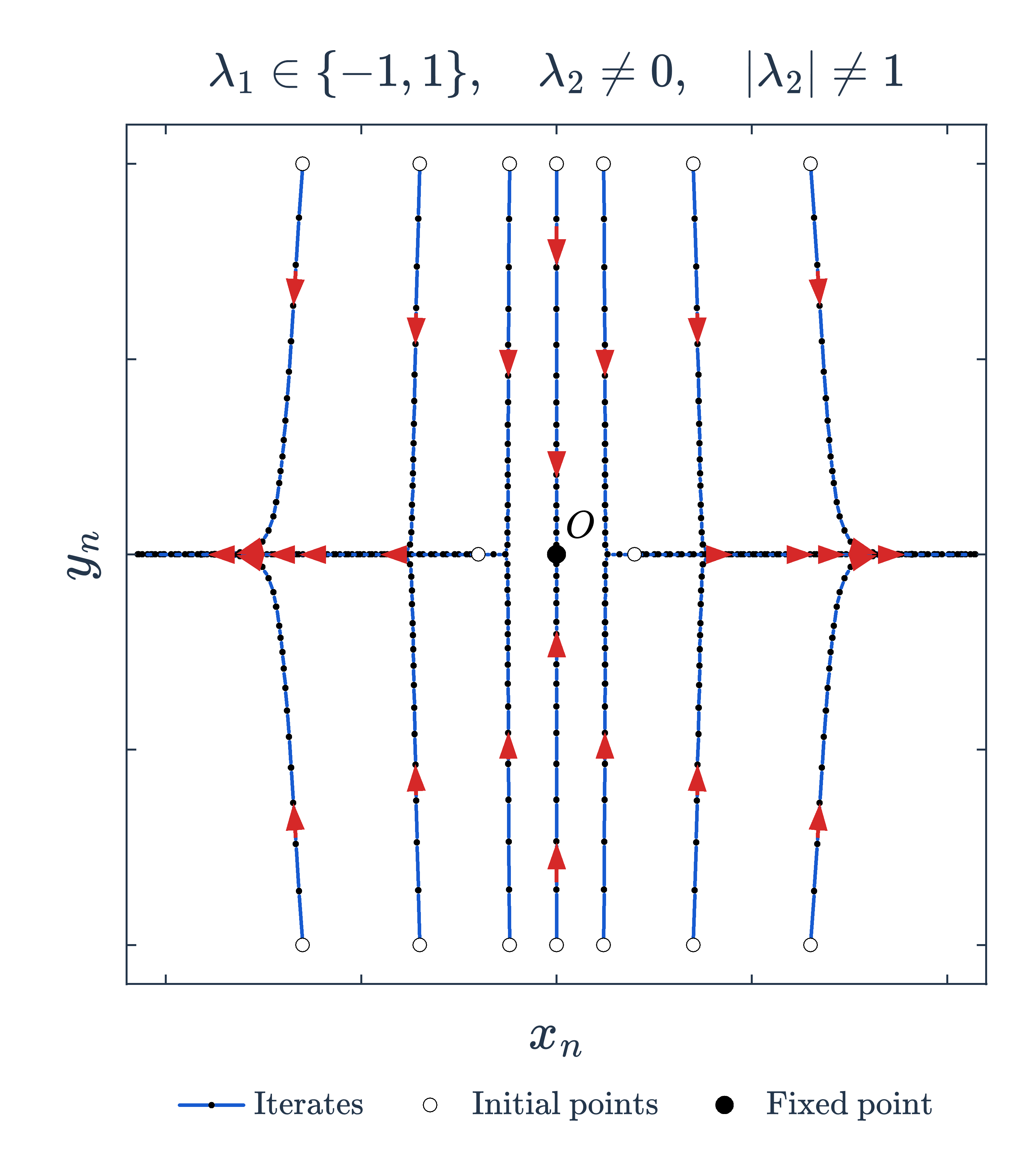}
				{Non-hyperbolic fixed point}
				{case-S-mixed}{fig:phase-S2}
				\hfill
				\FixedPointPanel{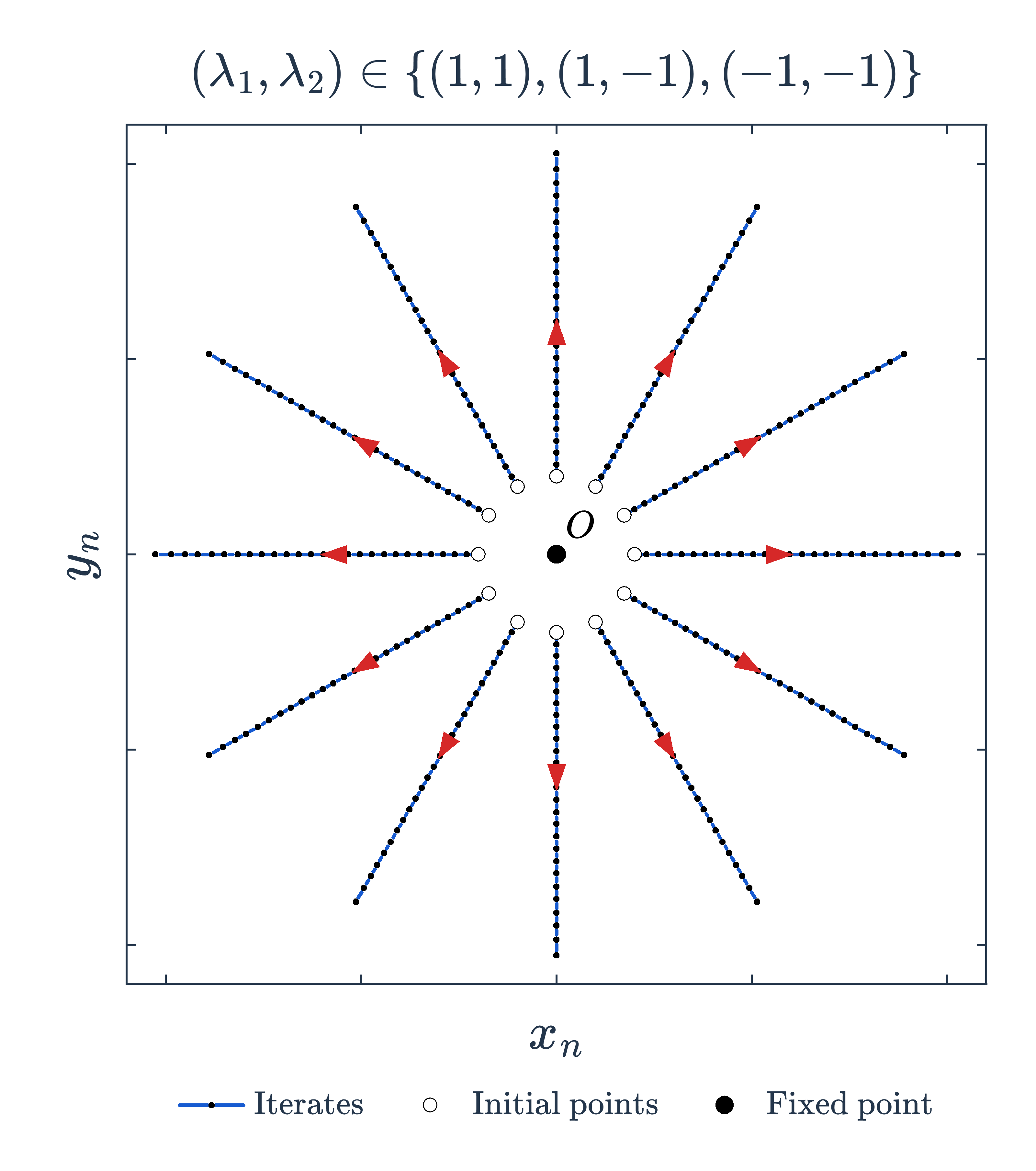}
				{Non-hyperbolic fixed point}
				{case-S-real-unit}{fig:phase-S3}
				
				\par\vspace{4pt}
				
				\FixedPointPanel{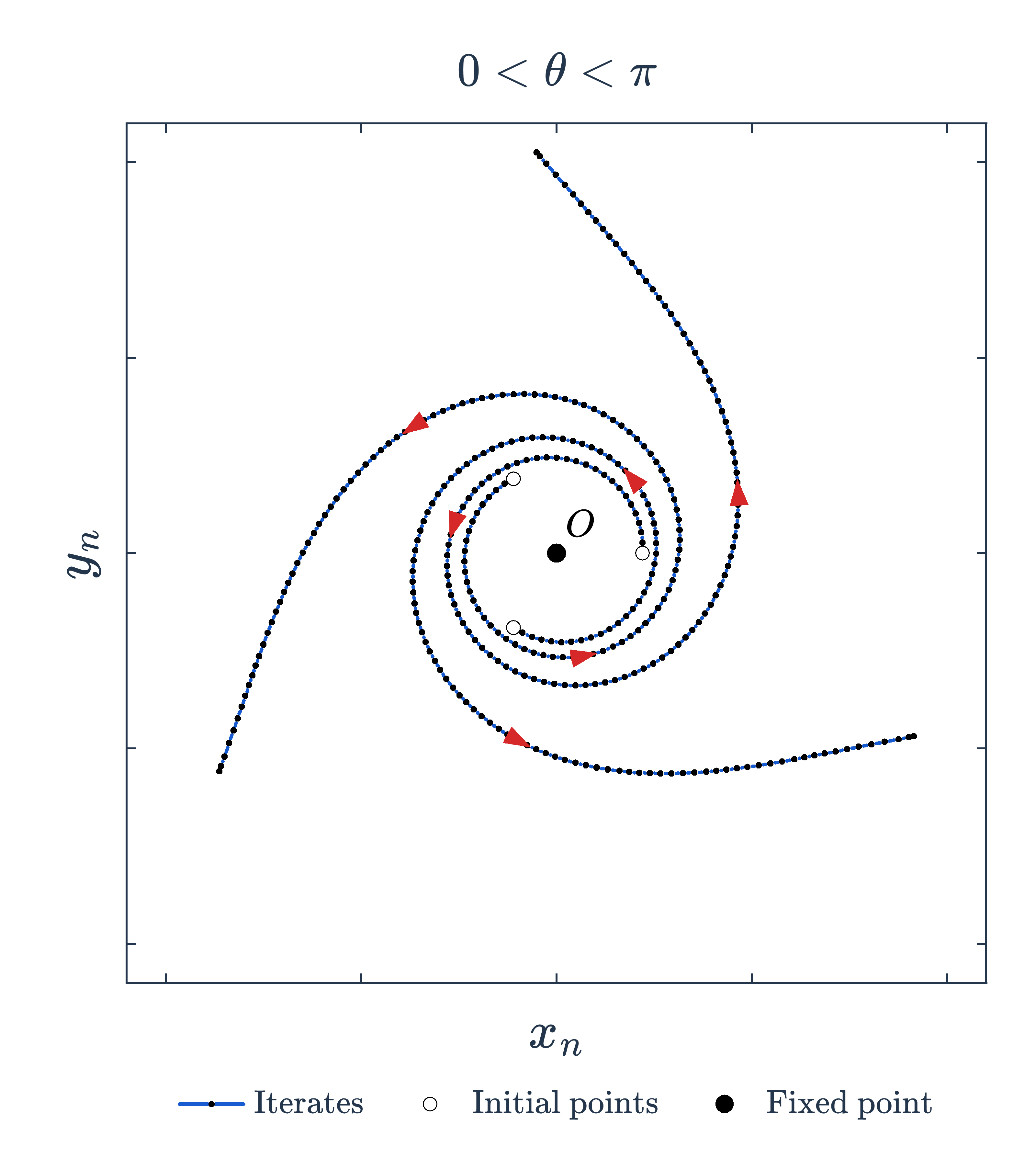}
				{Non-hyperbolic fixed point}
				{case-S-elliptic}{fig:phase-S4}
				\hspace{0.0275\linewidth}
				\FixedPointPanel{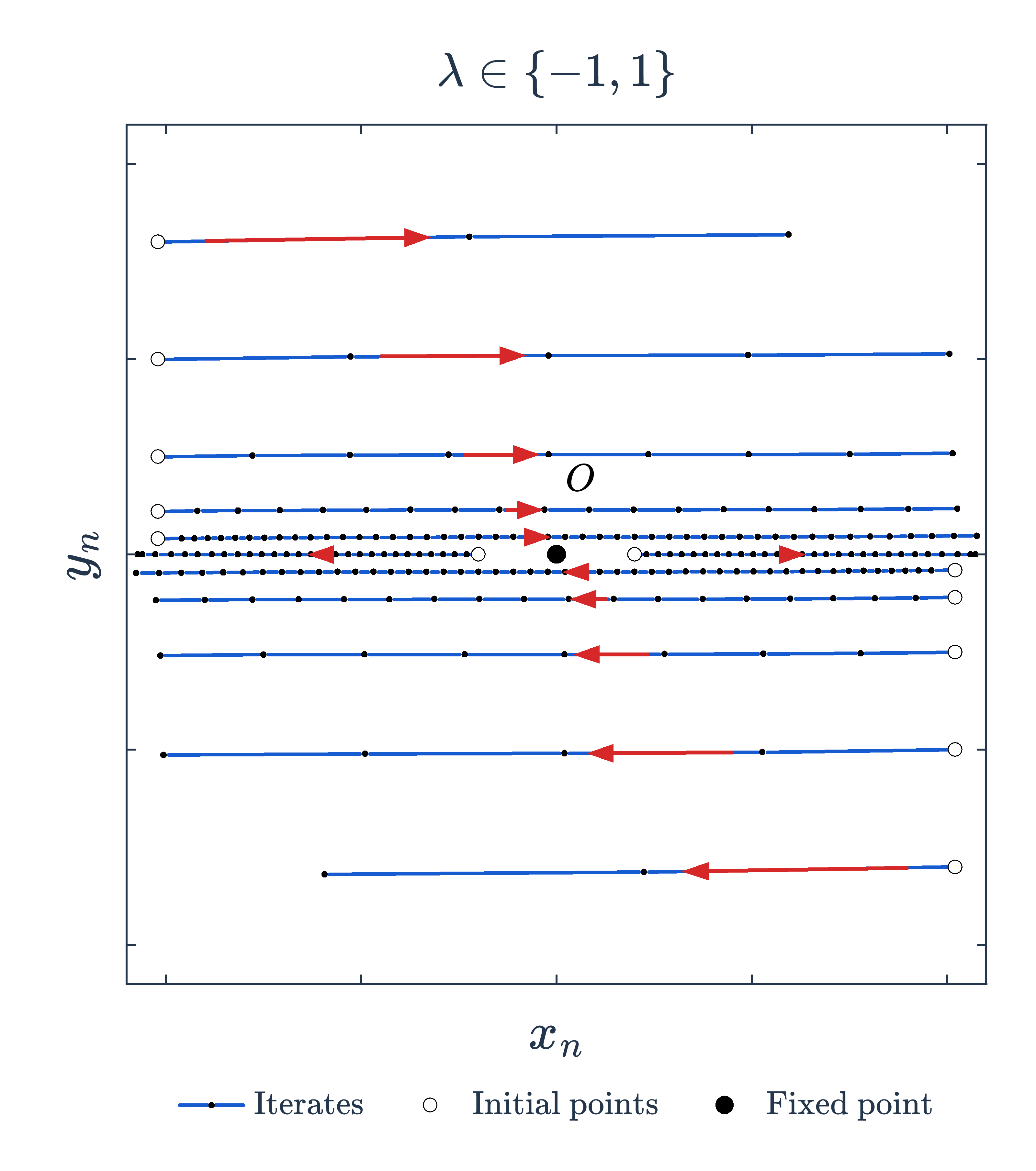}
				{Non-hyperbolic fixed point}
				{case-S-unit-jordan}{fig:phase-S5}
				
				\caption{Local phase portraits for the eight canonical cases of planar diffeomorphisms. Each panel displays discrete orbits of a specific mapping; in the non-hyperbolic setting, only a single configuration is shown, though other geometric profiles may also occur.}
				\label{fig:fixed-point-classification}
			\end{minipage}
		\end{adjustbox}
	\end{figure}

	\subsubsection{Case~\ref{case-P-attractor}: Hyperbolic Attractors with Real Diagonalizable Linear Parts}
	
	We first consider Case~\ref{case-P-attractor}. 
	The local $C^1$ linearization problem asks whether there exists a local $C^1$ diffeomorphism $\Phi:(\mathbb{R}^2,O)\to(\mathbb{R}^2,O)$ tangent to the identity such that
	\[
	\Phi\circ F=\Lambda\circ\Phi,
	\quad
	\Phi(O)=O,
	\quad
	D\Phi(O)=\operatorname{Id}.
	\]
	The normalization $D\Phi(O)=\operatorname{Id}$ entails no loss of generality. If $H\circ F=\Lambda\circ H$ and $A=DH(O)$, differentiation at $O$ gives $A\Lambda=\Lambda A$; hence $\Phi=A^{-1}H$ is a conjugacy to the same linear part and satisfies $D\Phi(O)=\operatorname{Id}$.
	The critical spectral exponent governing this problem is given by
	\[
	\alpha_0
	:=1-\frac{\log|\lambda_2|}{\log|\lambda_1|}
	\in[0,1).
	\]
	
	Hartman proved that $C^{1,1}$ contractions are locally $C^1$ linearizable \cite{Har60}. An earlier spectral-gap criterion, when specialized to the planar setting above, yields a $C^1$ linearization for $C^{1,\alpha}$ mappings under the stronger condition
	\[
	\alpha>\alpha_1,
	\quad
	\alpha_1:=\frac{\log|\lambda_1|}{\log|\lambda_2|}-1;
	\]
	see Chaperon \cite{Cha02} for details. Zhang--Zhang \cite{ZZ11} lowered this requirement to the critical exponent $\alpha_0$: every planar $C^{1,\alpha}$ contraction in Case \ref{case-P-attractor} is locally $C^1$ linearizable whenever $\alpha>\alpha_0$. They also obtained sharp H\"older estimates for the derivatives of the conjugacy and its inverse. Subsequently, Zhang--Zhang \cite{ZZ14} established that this regularity threshold is optimal: whenever $|\lambda_1|<|\lambda_2|$ (so that $\alpha_0>0$), for every $0<\alpha\leqslant\alpha_0$ there exists a planar $C^{1,\alpha}$ contraction with the prescribed eigenvalues that fails to be locally $C^1$ linearizable.

	The H\"older scale, however, does \textit{not} exhaust the regularity of $C^1$ mappings. On a compact neighborhood of the origin, the derivative of any $C^1$ mapping admits a modulus of continuity, yet it need not satisfy a H\"older condition of any positive exponent; see Section~\ref{sec:beyond-holder-comparison} for a detailed discussion. Consequently, a framework restricted to H\"older exponents cannot yield an optimal criterion for general $C^1$ systems. This motivates formulating the problem in terms of general moduli of continuity. Let $\omega:[0,\infty)\to[0,\infty)$ be a continuous, strictly increasing, concave function satisfying $\omega(0)=0$. Since our analysis is local, it suffices that $\omega$ is defined on a right-neighborhood of $0$, as it can be extended to $[0,\infty)$ preserving monotonicity and concavity. We write $F\in C_{1,\omega}$ if, on some neighborhood of the origin,
	\[
	|DF(x)-DF(y)|\leqslant L\omega(|x-y|)
	\]
	for some constant $L>0$\footnote{For our local purposes, this estimate suffices, and there is no need to introduce a full Banach norm on $C_{1,\omega}$.}. Here, the subscript in $C_{1,\omega}$ emphasizes the general modulus of continuity, distinguishing it from the standard H\"older class $C^{1,\alpha}$. This concavity convention is natural for local estimates: on a compact convex neighborhood, the intrinsic modulus $\omega_{DF}(t):=\sup_{|x-y|\leqslant t}|DF(x)-DF(y)|$ is subadditive and, unless identically zero, has an equivalent concave majorant by Stechkin's lemma (see \cite[p.~78]{Efi61} and \cite[p.~177]{DS08}). Throughout the paper, the prescribed modulus $\omega$ is understood to satisfy this convention. Because general moduli of continuity lack the scaling symmetry of the H\"older functions $t^\alpha$, smooth linearization in this setting presents substantial analytical challenges and has remained largely unexplored.
	
	Following the work of Zhang--Lu--Zhang \cite{ZLZ17} on the Hartman--Grobman theorem with differentiability at the fixed point\footnote{Here, differentiability is required only at the fixed point.} under sharp H\"older conditions in Banach spaces, Tong--Li \cite{TL24} established an analogue for general moduli of continuity. Returning to planar contractions, Tong--Xu--Li \cite{TXL24} proved local $C^1$ linearizability under the \textbf{polynomially weighted Dini condition}
	\begin{equation}\tag{$\mathrm{WD}_{\rm P}$}\label{kjh}
		\int_0^1 \frac{\omega(t)}{t^{\alpha_0+1}} dt <+ \infty.
	\end{equation}
	In the isotropic case $0<|\lambda_1|=|\lambda_2|<1$, where $\alpha_0=0$, the weighted Dini condition \eqref{kjh} reduces to the classical \textbf{Dini condition} (see \cite{FJ01a,FJ01b,Ye13,JY22,Bou24,TL25,DGT26} for applications in dynamical systems)
	\begin{equation}\tag{D}\label{dinicd}
		\int_0^1 \frac{\omega(t)}{t} dt <+ \infty.
	\end{equation}
	Moreover, the Dini condition \eqref{dinicd} is known to be optimal in this context\footnote{Although the counterexample in \cite{TXL24} is formulated for the contracting case \(0<\lambda_1=\lambda_2<1\), the same invariant-axis construction applies when \(0<|\lambda_1|=|\lambda_2|<1\) and \(\lambda_1\lambda_2<0\), after reversing the sign of the appropriate component and applying the argument to the absolute values of the iterates on the invariant axis.}. The obstruction in that counterexample originates from the divergence of an infinite product of normalized derivatives along an invariant coordinate axis, closely mirroring the one-dimensional accumulation mechanism constructed via a logarithmic modulus of continuity by Sternberg \cite{Ste57a}; see also Kuczma \cite[p.~139]{Kuc68}. When $\omega(t)=t^\alpha$, the weighted Dini condition \eqref{kjh} coincides with $\alpha>\alpha_0$, thereby recovering the optimal H\"older threshold. For general moduli of continuity, however, \eqref{kjh} provides a substantially finer criterion that goes beyond the H\"older setting.

	Nevertheless, despite the sufficiency established in \cite{TXL24}, the decisive question of whether the weighted Dini condition \eqref{kjh} is \textit{necessary} in the presence of a strict spectral gap 
	$ 0 < |\lambda_1| < |\lambda_2| < 1 $
	has remained completely open. In this anisotropic regime, the disparity between the two contracting rates generates an intrinsic cross-coupling between the distinct directions, so the invariant-axis accumulation mechanism alone does not establish the weighted threshold.
	
	Our first main result overcomes this difficulty by developing a genuinely two-dimensional counterexample, completely settling the optimality of the weighted Dini condition \eqref{kjh}.

	
	
	\begin{mainthm}[Anisotropic counterexample]\label{TH1}
		Let $0<|\lambda_1|<|\lambda_2|<1$, and let $\omega$ be a modulus of continuity for which the weighted Dini condition \eqref{kjh} fails. Then there exists a planar $C_{1,\omega}$ diffeomorphism $F$ in Case \ref{case-P-attractor} that is not locally $C^1$ linearizable.
	\end{mainthm}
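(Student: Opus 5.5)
The plan is to obstruct $C^1$ linearization through the asymptotics of normalized derivatives along a single orbit, rather than through invariant-axis accumulation, which sees only the classical Dini condition. Suppose $\Phi$ is a local $C^1$ conjugacy with $\Phi\circ F=\Lambda\circ\Phi$ and $D\Phi(O)=\mathrm{Id}$. Differentiating $\Phi\circ F^n=\Lambda^n\circ\Phi$ at a point $p$ near $O$ gives
\[
\Lambda^{-n}DF^n(p)=\Lambda^{-n}\,D\Phi(F^n p)^{-1}\,\Lambda^n\,D\Phi(p).
\]
Since $D\Phi(F^np)\to \mathrm{Id}$, the entries of $\Lambda^{-n}DF^n(p)$ must stay bounded as $n\to\infty$, possibly after controlling the conjugation of the small error $D\Phi(F^np)^{-1}-\mathrm{Id}$ by $\Lambda^{\pm n}$. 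The construction should therefore produce one orbit along which an entry of $\Lambda^{-n}DF^n$ diverges, or diverges faster than that error can compensate.

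I would take $F$ triangular, $F(x,y)=(\lambda_1x+g(x,y),\lambda_2y)$, with $g=O(|(x,y)|^2)$-small and $DF\in C_{1,\omega}$. For an orbit $p_k=(x_k,y_k)$ with $g(p_k)=0$ and $\partial_xg(p_k)=0$, we have $x_k=\lambda_1^kx_0$ and $y_k=\lambda_2^ky_0$. The $(1,2)$ entry of $\Lambda^{-n}DF^n(p_0)$ is then essentially
\[
\sum_{k<n}\lambda_1^{-k-1}\lambda_2^{k}\,\partial_yg(p_k).
\]
Using $|\lambda_1|=|\lambda_2|^{1/(1-\alpha_0)}$ and setting $r_k:=|x_k|\sim|\lambda_1|^k$, one finds $(|\lambda_2|/|\lambda_1|)^k\sim r_k^{-\alpha_0}$. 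Hence if $\partial_yg(p_k)\approx\omega(r_k)$ with a fixed sign (adjusting for the signs of $\lambda_i$), the sum is comparable to $\sum_k r_k^{-\alpha_0}\omega(r_k)$. Because $r_k$ is geometric, this sum is comparable to $\int_0^1\omega(t)t^{-\alpha_0-1}\,dt=+\infty$ by the failure of \eqref{kjh}.

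A naive choice such as $g=y\,\omega(|x|)$ fails. The orbit has $|y_k|\gg|x_k|$, since $y$ contracts more slowly, and then $\partial_xg\sim y\,\omega'(x)\sim (y/x)\,\omega(x)$ is not even bounded. I would instead define $g=\sum_k g_k$ as a sum of disjointly supported bumps. Each $g_k$ is supported in a ball of radius $\sim r_k$ around $p_k$ and has the local form $g_k(x,y)=(y-y_k)\,\omega(r_k)\,\psi\big((x-x_k,y-y_k)/r_k\big)$, where $\psi$ is a fixed cutoff equal to $1$ near $0$. This bump vanishes together with $\partial_xg_k$ at $p_k$, so the orbit is unchanged and the formula above for the off-diagonal entry is exact, with $\partial_yg(p_k)=\omega(r_k)$. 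The supports are disjoint because consecutive points are at distance $\sim|y_k|\gg r_k$, and they shrink to $O$, so $F$ is a local $C^1$ diffeomorphism with $DF(O)=\Lambda$.

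The expected main obstacle is verifying $DF\in C_{1,\omega}$ uniformly. Within one bump, $|D g_k|\lesssim\omega(r_k)$ and $|D^2g_k|\lesssim\omega(r_k)/r_k$. So for $|u-v|\le r_k$, concavity gives $|Dg_k(u)-Dg_k(v)|\lesssim\omega(r_k)|u-v|/r_k\le\omega(|u-v|)$, and for $|u-v|\ge r_k$ the bound is $2\omega(r_k)\le2\omega(|u-v|)$. Points in different bumps are separated by more than either radius, which yields the same bound. Because $\omega(r_k)\to0$, the map is $C^1$ with $Dg(O)=0$.

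The remaining delicate point is the conjugation of the error term. I would first try to show directly that $\Lambda^{-n}D\Phi(F^np)^{-1}\Lambda^n$ stays bounded for this triangular structure. Failing that, I would derive the contradiction from the $(1,2)$ entry via the relation $D\Phi(p_n)=\Lambda^nD\Phi(p_0)\,(DF^n(p_0))^{-1}$, which forces $|\partial_y\Phi_1(p_n)|\gtrsim |(\lambda_1/\lambda_2)^n|\cdot|\lambda_1^{-n}\lambda_2^n\textstyle\sum_k\cdots|$ to stay away from $0$, contradicting continuity of $D\Phi$ at $O$.
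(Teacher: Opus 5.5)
\textbf{The contradiction step fails.} Your perturbation is fine on the regularity side. You should take bump radius $c\,r_k$ with $c<1$ so the bumps miss the $y$-axis, and choose the sign of each bump so that $\lambda_1^{-k-1}\lambda_2^k\,\partial_yg(p_k)>0$. The problem is the final step: divergence of the $(1,2)$ entry of $\Lambda^{-n}DF^n(p_0)$ along a single orbit is compatible with $C^1$ linearizability.

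Write $D\Phi(p_n)^{-1}=\mathrm{Id}+E_n$ with $E_n\to0$. The $(1,2)$ entry of $\Lambda^{-n}(\mathrm{Id}+E_n)\Lambda^n$ is $(\lambda_2/\lambda_1)^n(E_n)_{12}$. This is exactly the entry your identity leaves uncontrolled. It absorbs your divergent sum $S_n=\sum_{k<n}\lambda_1^{-k-1}\lambda_2^k\partial_yg(p_k)$ as soon as $(\lambda_1/\lambda_2)^nS_n\to0$. That holds automatically, because $(\lambda_1/\lambda_2)^nS_n=\lambda_1^{-1}\sum_{k<n}(\lambda_1/\lambda_2)^{n-k}\partial_yg(p_k)$ is a geometrically weighted average of terms tending to $0$.

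Your fallback fails for the same reason. Solving $D\Phi(p_n)=\Lambda^nD\Phi(p_0)DF^n(p_0)^{-1}$ exactly gives
\[
\partial_y\Phi_1(p_n)=\Bigl(\frac{\lambda_1}{\lambda_2}\Bigr)^n\bigl(\varphi_{12}(p_0)-\varphi_{11}(p_0)S_n\bigr)\longrightarrow0,
\]
so nothing stays away from zero. The factor multiplying $S_n$ is $(\lambda_1/\lambda_2)^n$, not $1$.

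Here is a concrete witness. Let $\phi(y)=y/\log(e/|y|)$ with $\phi(0)=0$, and $\Phi(x,y)=(x+\phi(y),y)$. The triangular map $F=\Phi^{-1}\Lambda\Phi=(\lambda_1x+\lambda_1\phi(y)-\phi(\lambda_2y),\lambda_2y)$ is $C^1$ linearizable by construction. Yet the $(1,2)$ entry of $\Lambda^{-n}DF^n(p_0)$ equals $\phi'(y_0)-(\lambda_2/\lambda_1)^n\phi'(\lambda_2^ny_0)$, which diverges since $\phi'(t)\asymp1/\log(e/|t|)$. So neither of your options can be completed from the triangular structure alone.

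\textbf{The missing idea.} You need to replace differentiation along one orbit, which would require interchanging $\lim_n$ with $D$, by a difference quotient across a family of orbits whose increments all have one sign.

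Since your bumps miss the $y$-axis, $F_1(0,y)=0$. The $(1,1)$ entry of $D\Phi(F^nx)DF^n(x)=\Lambda^nD\Phi(x)$ gives $\lambda_1^{-n}\partial_x\pi_1F^n\to\partial_x\Phi_1$ locally uniformly. Integrating from the $y$-axis then shows that $\Psi_1=\lim_n\lambda_1^{-n}\pi_1F^n=\Phi_1(x,y)-\Phi_1(0,y)$ exists and is $C^1$.

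Now take $q_b=p_0+(0,b)$ with $b>0$. The second coordinate of $F^kq_b$ is $y_k+\lambda_2^kb$. So the orbit stays in the core of the $k$-th bump for $k\leqslant K(b)$, where $|\lambda_2/\lambda_1|^{K(b)}b\approx c_0|x_0|$. A short bootstrap keeps the first-coordinate drift below $c_0r_k$. With your sign choice, every increment $\lambda_1^{-k-1}g(F^kq_b)$ is nonnegative, and $\Psi_1(p_0)=x_0$. Hence
\[
\frac{\Psi_1(q_b)-\Psi_1(p_0)}{b}\geqslant\frac1{|\lambda_1|}\sum_{k\leqslant K(b)}\Bigl|\frac{\lambda_2}{\lambda_1}\Bigr|^k\omega(r_k)\longrightarrow+\infty\quad(b\to0^+),
\]
which contradicts differentiability of $\Psi_1$ at $p_0$.

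This is the paper's mechanism. The paper instead uses the cone perturbation $\Omega(|x_1|)\operatorname{sgn}(x_1)K(x_2/x_1)$, which equals $\widehat\omega(|x_1|)x_2$ on $|x_2|\leqslant|x_1|$, with base point $(\xi,0)$ on the strong-stable axis. So essentially the naive choice you discarded works once it is confined to a cone and tested as $y\to0$. With this repair your orbit-localized bumps also give a valid counterexample. Their per-bump sign choice handles $\lambda_1\lambda_2<0$ more simply than the paper's cosine factor.
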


	\begin{customrem}{A}\label{rem:A}
		Unlike the isotropic setting where the obstruction to linearization concentrates along an invariant coordinate axis, the counterexample in Theorem~\ref{TH1} requires a genuinely two-dimensional mechanism that couples the two distinct contracting rates; see Section~\ref{SEC3} for the detailed construction.
	\end{customrem}

	Together with \cite[Main Theorem 1 and Example 3.1]{TXL24}, Theorem~\ref{TH1} shows that the weighted Dini condition \eqref{kjh} is optimal as a uniform regularity criterion for planar contractions. This yields our second main result, which provides a complete characterization of $C^1$ linearizability in Case~\ref{case-P-attractor}\footnote{Recall that the derivative of any $C^1$ contraction admits a modulus of continuity near the origin.}.
	
	\begin{mainthm}[Hyperbolic attractors with real diagonalizable linear parts]\label{TH2}
		Let $\omega$ be a modulus of continuity. Then the following statements hold for any prescribed $0<|\lambda_1|\leqslant|\lambda_2|<1$:
		\begin{enumerate}[label=(\Roman*)]
			\item \label{TH2-I} If the weighted Dini condition \eqref{kjh} holds, then every planar $C_{1,\omega}$ diffeomorphism $F$ in Case \ref{case-P-attractor} is locally $C^1$ linearizable.
			\item If the weighted Dini condition \eqref{kjh} fails, then there exists a planar $C_{1,\omega}$ diffeomorphism $F$ in Case \ref{case-P-attractor} that is not locally $C^1$ linearizable.
		\end{enumerate}
		Equivalently, the class $C_{1,\omega}$ of planar hyperbolic diffeomorphisms in Case \ref{case-P-attractor} is locally $C^1$ linearizable if and only if the weighted Dini condition \eqref{kjh} holds.
	\end{mainthm}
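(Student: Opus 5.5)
Theorem~\ref{TH2} splits into the sufficiency statement~\ref{TH2-I} and the optimality statement (II); the plan assembles it from results already stated, case by case on the spectrum.

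For~\ref{TH2-I}, the weighted Dini condition \eqref{kjh} holds for the given $0<|\lambda_1|\leqslant|\lambda_2|<1$. We simply invoke Tong--Xu--Li \cite[Main Theorem~1]{TXL24}. It asserts local $C^1$ linearizability of every planar $C_{1,\omega}$ contraction whose linear part is diagonal with these eigenvalues under \eqref{kjh}, including the isotropic case, where $\alpha_0=0$ and \eqref{kjh} reduces to \eqref{dinicd}. The only things to check are bookkeeping. First, the convention on $\omega$ (concave, strictly increasing, $\omega(0)=0$, defined near $0$) matches that of \cite{TXL24}. Second, a local $C_{1,\omega}$ bound near $O$ suffices, after the usual cutoff/extension of $F-\Lambda$ to a globally small perturbation if \cite{TXL24} requires it; the modulus is preserved up to a constant because $\omega$ is concave, so $\omega(ct)\leqslant \max\{1,c\}\omega(t)$. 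Third, the normalization $D\Phi(O)=\mathrm{Id}$ is harmless, by the remark preceding $\alpha_0$.

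For (II), suppose \eqref{kjh} fails. There are two subcases.
\begin{itemize}
\item If $|\lambda_1|<|\lambda_2|$, then $\alpha_0>0$, and Theorem~\ref{TH1} directly supplies a $C_{1,\omega}$ diffeomorphism with linear part $\mathrm{diag}(\lambda_1,\lambda_2)$ that is not locally $C^1$ linearizable.
\item If $|\lambda_1|=|\lambda_2|$, then $\alpha_0=0$, so the failure of \eqref{kjh} is exactly the failure of the Dini condition \eqref{dinicd}. Here we use \cite[Example~3.1]{TXL24}: an invariant-axis construction $F(x,y)=(\lambda_1 x+g(x),\lambda_2 y)$, where $g'$ has modulus $\omega$ and is chosen so that $\prod_n F'_1(x_n)/\lambda_1$ diverges along an orbit on the $x$-axis. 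That divergence forces any conjugacy to have non-continuous derivative at $O$, since $C^1$ linearization of $F$ would restrict to a $C^1$ linearization of the one-dimensional map on the invariant axis.
\end{itemize}

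The example in \cite{TXL24} is written for $\lambda_1=\lambda_2>0$, and this is the only step needing an argument beyond citation. For the sign cases $\lambda_1\lambda_2<0$ and $\lambda_1=\lambda_2<0$, we keep the same construction with $\lambda_1$ in place of a positive eigenvalue on the axis. The perturbation is taken odd (e.g.\ $g(x)=\operatorname{sgn}(x)\,h(|x|)$), so the axis map commutes with $x\mapsto -x$. The divergence argument is then run on $|x_n|$, whose dynamics is the positive-eigenvalue model with $|\lambda_1|$. One checks that $g\in C_{1,\omega}$ still holds: oddness keeps $g'$ even and continuous at $0$ because $h'(0)=0$.

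The main obstacle is therefore only this sign adaptation in the isotropic case; all the substantive difficulty lies in Theorem~\ref{TH1}, which we take as given. The ``equivalently'' statement follows at once by combining (I) and (II).
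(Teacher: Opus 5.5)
Your proposal is correct and is essentially the paper's own argument. Item~\ref{TH2-I} is quoted from \cite[Main Theorem~1]{TXL24}, and the necessity part uses Theorem~\ref{TH1} in the anisotropic case and \cite[Example~3.1]{TXL24} with a sign adaptation in the isotropic case, exactly as in the paper's footnote. Two points to tidy: with $g(x)=\operatorname{sgn}(x)h(|x|)$ and $\lambda_1<0$ the moduli actually follow $t\mapsto|\lambda_1|t-h(t)$, so either flip the sign of $g$ (as the paper's footnote does) or note that the divergence persists since $|x_n|\geqslant\mu^n|x_0|$ for some $\mu\in(0,1)$; and in the isotropic case the passage to the axis is cleanest via the $(1,1)$-entry identity of Lemma~\ref{jde14yl}, because the $x$-axis is no longer a strong stable manifold.
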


Beyond the existence of a $C^1$ conjugacy guaranteed by Theorem~\ref{TH2}--\ref{TH2-I}, we determine the optimal modulus of continuity of its derivative. For the anisotropic case, the estimates in \cite[Main Theorem 2]{TXL24} admit a simpler form with two branches, distinguished by an \textit{unexpected} second weighted integral. 

\begin{mainthm}[Optimal higher regularity]\label{TH3}
Let $\omega$ be a modulus of continuity satisfying the weighted Dini condition~\eqref{kjh}.
\begin{enumerate}[label=(\Roman*)]
\item\label{TH3-1}
Assume that $0<|\lambda_1|<|\lambda_2|<1$, and put
\[
\alpha_1:=\frac{\alpha_0}{1-\alpha_0}
=\frac{\log|\lambda_1|}{\log|\lambda_2|}-1,
\quad
I_{\alpha_1}(\omega):=\int_0^1\frac{\omega(s)}{s^{1+\alpha_1}}ds.
\]
For sufficiently small $t>0$, define
\begin{equation}\label{eq:anisotropic-optimal-modulus}
	\omega_{\mathrm{A}}(t):=
	\begin{cases}
		\displaystyle t^{\alpha_1}\int_0^t\frac{\omega(s)}{s^{1+\alpha_1}}ds,
		&  I_{\alpha_1}(\omega)<+\infty,\\[3mm]
		\displaystyle\int_0^{t^{\frac{1}{\alpha_0}}}\frac{\omega(s)}{s^{1+\alpha_0}}ds
		+t^{\alpha_1}\int_{t^{\frac{1}{\alpha_0}}}^1\frac{\omega(s)}{s^{1+\alpha_1}}ds,
		&  I_{\alpha_1}(\omega)=+\infty.
	\end{cases}
\end{equation}
Then every planar $C_{1,\omega}$ diffeomorphism $F$ in Case~\ref{case-P-attractor} admits a local linearizing diffeomorphism $\Phi$ with $\Phi,\Phi^{-1}\in C_{1,\omega_{\rm A}}$.
This estimate is optimal as a uniform guarantee for each prescribed pair $(\lambda_1,\lambda_2)$: if $\eta$ is a modulus of continuity such that every
such $F$ admits a local $C_{1,\eta}$ linearizing diffeomorphism, then
\[
\omega_{\mathrm A}(t)=\mathcal O(\eta(t))
\quad\text{as }t\to0^+.
\]

\item\label{TH3-2}
Assume that $0<|\lambda_1|=|\lambda_2|<1$, and define
\begin{equation}\label{omegaA}
\omega_{\rm B}(t):=\int_0^t\frac{\omega(s)}s ds.
\end{equation}
Then every planar $C_{1,\omega}$ diffeomorphism $F$ in Case~\ref{case-P-attractor} admits a local linearizing diffeomorphism $\Phi$ with $\Phi,\Phi^{-1}\in C_{1,\omega_{\rm B}}$. This estimate is optimal for each prescribed pair $(\lambda_1,\lambda_2)$: there exists such a diffeomorphism $F$ for which every normalized local linearizing diffeomorphism $\Phi\in C_{1,\eta}$ satisfies
\[
\omega_{\rm B}(t)=\mathcal O(\eta(t))\quad\text{as }t\to0^+.
\]
\end{enumerate}
\end{mainthm}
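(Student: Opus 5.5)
The proof will have two independent halves for each part: an upper bound, which sharpens the estimates of \cite[Main Theorem 2]{TXL24}, and a lower bound, which comes from explicit counterexamples with the prescribed linear part.

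For the upper bound in part~\ref{TH3-1}, I would first reduce to the $C^1$ conjugacy supplied by Theorem~\ref{TH2}--\ref{TH2-I}, after normalizing $F=\Lambda+f$ with $f(O)=0$ and $Df(O)=0$. The conjugacy is written as $\Phi=\mathrm{Id}+\phi$, where $\phi$ solves the homological equation componentwise. This gives
\[
D\phi_i(x)=\sum_{n\geqslant0}\lambda_i^{-n-1}\bigl(\text{terms involving }Df(F^n x)\cdot DF^n(x)\bigr).
\]
The difference $D\Phi(x)-D\Phi(y)$ is then split at a cutoff index $N(t)$, where $t=|x-y|$.

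Since orbits contract at rates between $|\lambda_1|^n$ and $|\lambda_2|^n$, the terms with $n\leqslant N$ are controlled by $\omega(|\lambda_2|^n t)$, weighted by the ratio $|\lambda_1|^n/|\lambda_2|^n$ coming from the cross-coupled entries. The tail terms are controlled by $\omega(|\lambda_2|^n)$ with the same weights. Converting the geometric sums into integrals via $s=|\lambda_2|^n$ produces the weight $s^{-1-\alpha_0}$ in one regime and $s^{-1-\alpha_1}$ in the other; the exponent $\alpha_1=\alpha_0/(1-\alpha_0)$ arises when the substitution uses $|\lambda_1|^n$ rather than $|\lambda_2|^n$.

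The choice of $N$ then depends on the two branches:
\begin{itemize}
\item If $I_{\alpha_1}(\omega)<\infty$, the natural cutoff $|\lambda_2|^N\sim t$ yields $t^{\alpha_1}\int_0^t \omega(s)s^{-1-\alpha_1}\,ds$.
\item If $I_{\alpha_1}(\omega)=\infty$, the optimal balance is at the scale $s=t^{1/\alpha_0}$, which produces the two-term expression $\omega_{\mathrm A}$.
\end{itemize}
The inverse $\Phi^{-1}$ inherits the same modulus, because inversion of a $C^1$ map near the identity preserves a modulus of continuity of the derivative up to constants.

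Part~\ref{TH3-2} is simpler. Equal rates make the weight equal to $1$, and the same splitting gives $\omega_{\rm B}(t)=\int_0^t\omega(s)s^{-1}\,ds$.

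For optimality in part~\ref{TH3-2}, I would use a decoupled example on an invariant axis, $F(x_1,x_2)=(\lambda_1x_1+g(x_1),\lambda_2x_2)$ with $g'(u)\approx\omega(|u|)$. Along that axis the conjugacy derivative is the infinite product $\prod_n F'(F^n x)/\lambda_1$, so $\log\Phi'(x_1)\approx\sum_n\omega(|\lambda_1|^n|x_1|)\approx\omega_{\rm B}(|x_1|)$. Comparing with $\Phi'(0)=1$ forces $\eta\gtrsim\omega_{\rm B}$, and uniqueness of the normalized conjugacy makes this apply to every $\Phi$.

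For optimality in part~\ref{TH3-1}, I would reuse the two-dimensional coupling mechanism of Theorem~\ref{TH1}. A model such as $F(x)=(\lambda_1x_1+h(x_2),\lambda_2x_2)$ should work, with $h$ built from $\omega$ so that $h'$ has modulus $\omega$ but is tuned to saturate each branch. The conjugacy component $\phi_1$ then solves $\phi_1(\Lambda x)-\lambda_1\phi_1(x)=-h(x_2)$ explicitly as a series. Evaluating $\partial_{x_2}\phi_1$ at $(0,t)$ and at $(0,0)$ shows that its increment is comparable to $\omega_{\mathrm A}(t)$. I would use one profile of $h$ for the convergent branch and a profile concentrated near the scale $t^{1/\alpha_0}$ for the divergent branch. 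Because the linear part is fixed and $D\Phi(O)=\mathrm{Id}$, any other linearizing diffeomorphism differs from this one by a diffeomorphism commuting with $\Lambda$, which cannot improve the modulus; so the bound $\omega_{\mathrm A}=\mathcal O(\eta)$ follows.

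I expect the main obstacle to be this lower-bound construction in the divergent branch $I_{\alpha_1}=\infty$. There the cancellations in the series must be controlled so that both integrals in $\omega_{\mathrm A}$ are genuinely attained while $h\in C_{1,\omega}$ is preserved. My first attempt would be a lacunary sum of bumps placed at dyadic scales, with amplitudes $\omega(s_k)s_k$, and I would check that the sum has no sign cancellations.
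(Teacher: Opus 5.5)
Part~(II) and the branch $I_{\alpha_1}(\omega)<+\infty$ of Part~(I) essentially follow the paper's argument. The forward product $\Lambda^{-n}DF^n$ converges because $\sup|E_n|\lesssim|\lambda_2/\lambda_1|^n\omega(|\lambda_2|^n)$ is summable exactly when $I_{\alpha_1}(\omega)<+\infty$. Your triangular examples are the paper's $F_R$ and $(\lambda_1f_0(x_1),\lambda_2x_2)$. Two small repairs are needed there.

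First, do not bound the tail $n>N$ by $\sup|E_n|$: with $|\lambda_2|^N\sim t$ this loses a factor $t^{-\alpha_1}$. Instead, sum the difference bound $|\lambda_2/\lambda_1|^n\omega(|\lambda_2|^nt)$ over all $n$. Second, normalized linearizations are unique only up to maps $(x_1+\theta(x_2),x_2)$ with $\theta(\lambda_2y)=\lambda_1\theta(y)$. So you need the dichotomy on the limiting constant in the recursion for $\phi'$, using $\omega_{\rm A}=o(t^{\alpha_1})$ in this branch.

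The genuine gap is the branch $I_{\alpha_1}(\omega)=+\infty$. There $\omega_{\rm A}=B_\omega+H_\omega+R_\omega$, with $B_\omega(t)=\int_0^{t^{1/\alpha_0}}\omega(s)s^{-1-\alpha_0}ds$, $H_\omega(t)=t^{\alpha_1}\int_{t^{1/\alpha_0}}^{t}\omega(s)s^{-1-\alpha_1}ds$ and $R_\omega(t)=t^{\alpha_1}\int_t^1\omega(s)s^{-1-\alpha_1}ds$.

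\emph{Upper bound.} Your formula $D\phi_1=\sum_n\lambda_1^{-n-1}Df_1(F^nx)DF^n(x)$ is false in this branch. Its $x_2$-entry has terms of size $|\lambda_2/\lambda_1|^n\omega(|\lambda_2|^n)$. For $F=(\lambda_1x_1+h(x_2),\lambda_2x_2)$ with sign-coherent $h'\asymp\omega$, the series diverges, and so does $\lambda_1^{-n}\pi_1F^n$ itself. No choice of cutoff extracts $\omega_{\rm A}$ from it. The missing idea is the paper's change of coordinates, in three steps:
\begin{enumerate}
\item Linearize the weak coordinate by $\psi_2=\lim\lambda_2^{-n}\pi_2F^n$. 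Dini suffices here, and the new perturbation has modulus $w=\omega_{\rm B}$, which still satisfies \eqref{kjh}.
\item Construct the invariant weak curve $x_1=g(x_2)$ by the outward recursion $g(\lambda_2y)=\lambda_1g(y)+h(g(y),y)$.
\item Straighten this curve.
\end{enumerate}
Afterwards the off-diagonal entry of $DG$ vanishes on $\{x_1=0\}$. Hence it is $\mathcal O(w(|\lambda_1|^n))$ along orbits, and $\sum_n|\lambda_2/\lambda_1|^nw(|\lambda_1|^n)<+\infty$. The scale $t^{1/\alpha_0}$ is the crossover $|\lambda_2|^nt=|\lambda_1|^n$ in $\min\{w(|\lambda_2|^nt),w(|\lambda_1|^n)\}$, which produces $B_\omega+H_\omega$. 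The term $R_\omega$ comes from the regularity of $g$.

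\emph{Lower bound.} This also fails in the divergent branch, for a structural reason rather than cancellations. For any $F=(\lambda_1x_1+h(x_2),\lambda_2x_2)$, every normalized linearization has the form $(x_1+\phi(x_2),x_2)$ with $\lambda_2\phi'(\lambda_2y)=\lambda_1\phi'(y)-h'(y)$. The outward solution $\phi'(y)=-\lambda_2^{-1}\sum_{k\geqslant0}(\lambda_1/\lambda_2)^kh'(\lambda_2^{-k-1}y)$, with $h$ cut off away from $O$, gives such a linearization. Its derivative has modulus $\lesssim R_\omega(t)+t^{\alpha_1}\lesssim R_\omega(t)$, whatever profile you choose.

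But $B_\omega+H_\omega$ can dominate $R_\omega$. For $\omega(t)=t^\alpha$ with $\alpha_0<\alpha<\alpha_1$, one has $R_\omega\asymp t^\alpha$ while $\omega_{\rm A}\asymp t^{\alpha/\alpha_0-1}$. So lacunary bumps in $x_2$ recover only $R_\omega$, which is exactly what $F_R$ contributes in the paper.

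The scale $t^{1/\alpha_0}$ is dynamical, not a feature of a profile. A point $(\xi,y)$ near the strong axis stays in the cone $|x_2|\leqslant|x_1|$ for $M(y)$ iterates, where $|\lambda_2/\lambda_1|^{M}y\asymp\xi$, i.e.\ $|\lambda_1|^{M}\asymp y^{1/\alpha_0}$. To detect it you need perturbations of both variables localized in that cone, as in Theorem~\ref{TH1}. You then compare $(\xi,y)$ with $(\xi,0)$ through $\lim_n\lambda_1^{-n}\pi_1F^n(x,y)=\Phi_1(x,y)-\Phi_1(0,y)$, not on the weak axis.

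The paper uses two further maps:
\begin{itemize}
\item $F_B$, with first component $\lambda_1[x+\varepsilon y\vartheta(x)\widehat\omega(|x|)\chi(y/x)]$, where $\vartheta\equiv1$ if $\lambda_1\lambda_2>0$. The part of $\partial_2\Psi_1(\xi,0)$ lost after the escape time is $\gtrsim B_\omega(y)$.
\item $F_H$, with first component $\lambda_1[x+\varepsilon\operatorname{sgn}(x)\Omega(|y|)\chi(y/x)]$. Its contribution up to the escape time is $\gtrsim H_\omega$.
\end{itemize}
Combined with $F_R$, these give $\omega_{\rm A}=\mathcal O(\eta)$. Using three different maps is legitimate, because optimality in Part~(I) is a uniform statement over the class with the prescribed linear part.
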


For $\omega(t)=t^\alpha$ with $\alpha_0<\alpha\leqslant1$, formula~\eqref{eq:anisotropic-optimal-modulus} yields the exponents summarized in Table~\ref{tab:holder-exponents}. The resulting exponents coincide with the sharp H\"older estimates in \cite[Theorem~2 and Section~4]{ZZ11}; see also \cite[Remark~5]{ZZ14} and \cite[Example~3.2]{TXL24}. At the critical value $\alpha=\alpha_1\leqslant1$, the optimal modulus of continuity of the derivative is of order $t^\alpha\log(e/t)$: every exponent below $\alpha$ is admissible, whereas the endpoint strictly requires the logarithmic factor.

\begin{table}[htbp]
\centering
\begingroup
\small
\setlength{\tabcolsep}{5pt}
\setlength{\extrarowheight}{0pt}
\renewcommand{\arraystretch}{1.2}
\renewcommand{\tabularxcolumn}[1]{m{#1}}
\begin{tabularx}{\linewidth}{|
>{\centering\arraybackslash}m{0.30\linewidth}|
>{\centering\arraybackslash}m{0.30\linewidth}|
>{\centering\arraybackslash}X|}
\hline
Input H\"older exponent & Optimal derivative modulus & Guaranteed H\"older regularity \\
\hline
$\alpha_1<\alpha\leqslant1$ & $t^\alpha$ & $C^{1,\alpha}$ \\
\hline
$\alpha=\alpha_1\leqslant1$ & $t^\alpha\log(e/t)$ & $C^{1,\beta}$ for every $0<\beta<\alpha$ \\
\hline
$\alpha_0<\alpha<\alpha_1$, $\alpha\leqslant1$
& $t^{\alpha/\alpha_0-1}$ & $C^{1,\alpha/\alpha_0-1}$ \\
\hline
\end{tabularx}
\caption{Optimal output regularity for anisotropic contractions with input H\"older regularity.}
\label{tab:holder-exponents}
\endgroup
\end{table}

	\subsubsection{Case~\ref{case-P-focus}: Hyperbolic Attracting Foci}

	We next turn to Case~\ref{case-P-focus}. Since the eigenvalues have equal modulus, the classical spectral criterion of Chaperon \cite[Corollary~1.3.3]{Cha02}  yields local $C^{1,\alpha}$ linearizability for all $\alpha\in(0,1]$ (see also Newhouse \cite[Theorem~3.1 and Remark~3.2]{New17}
	for $\alpha\in(0,1)$), leaving no positive lower threshold on the H\"older exponent. However, the H\"older framework is too coarse to detect the critical regularity in terms of general moduli of continuity. Since there is no separation between the eigenvalue moduli, the critical criterion is naturally expected to be free of spectral weights, pointing directly to the classical Dini condition \eqref{dinicd}.
	
	Our fourth main result confirms that \eqref{dinicd} is indeed the definitive threshold:

	\begin{mainthm}[Hyperbolic attracting foci]\label{thm:focus}
		Let $\omega$ be a modulus of continuity. Then the following statements hold for any prescribed $0<\lambda<1$ and $\theta\notin\pi\mathbb{Z}$:
		\begin{enumerate}[label=(\Roman*)]
			\item \label{THfocus-1} If the Dini condition~\eqref{dinicd} holds, then every
			planar $C_{1,\omega}$ diffeomorphism $F$ in Case~\ref{case-P-focus} is locally $C^1$ linearizable.
			\item   \label{THfocus-2}  If the Dini condition~\eqref{dinicd} fails, then there
			exists a planar $C_{1,\omega}$ diffeomorphism $F$ in Case~\ref{case-P-focus} that is not locally
			$C^1$ linearizable.
		\end{enumerate}
		Equivalently, the class $C_{1,\omega}$ of planar hyperbolic diffeomorphisms in Case~\ref{case-P-focus} is locally $C^1$ linearizable if and only if the Dini
		condition~\eqref{dinicd} holds.
		
		Moreover, in Item \ref{THfocus-1}, the linearizing diffeomorphism can be chosen of class $C_{1,\omega_{\rm B}}$, where $\omega_{\rm B}(t)=\int_0^t\frac{\omega(s)}s ds$ is the modulus in \eqref{omegaA}. This estimate is optimal for each prescribed $\lambda$ and $\theta$: there exists such a diffeomorphism $F$ for which every normalized local linearizing diffeomorphism $\Phi\in C_{1,\eta}$ satisfies 
		\[
		\omega_{\rm B}(t)=\mathcal{O}(\eta(t))\quad\text{as } t\to0^+.
		\]
	\end{mainthm}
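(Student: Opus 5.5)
The plan is to work in complex notation. Identify $\mathbb{R}^2$ with $\mathbb{C}$ and write $\mu:=\lambda e^{i\theta}$, so that $\Lambda z=\mu z$ and $F(z)=\mu z+f(z)$ with $f(0)=0$, $Df(0)=0$, $Df\in C_{\omega}$.

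\textbf{Sufficiency (Item \ref{THfocus-1}).} We look for $\Phi=\mathrm{Id}+\phi$ solving $\Phi\circ F=\Lambda\Phi$. Equivalently, $\phi=\Lambda^{-1}\phi\circ F+\Lambda^{-1}f$, which we iterate to get the candidate
\[
\phi=\sum_{k\ge0}\Lambda^{-k-1} f\circ F^k .
\]
At the derivative level this amounts to the convergence of the product $\Lambda^{-n}DF^n(x)=\prod_{k}\Lambda^{-1}DF(F^kx)$, i.e.\ $D\Phi(x)=\lim_n \Lambda^{-n}DF^n(x)$.

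The key point is that $\Lambda$ is a conformal similarity with $\|\Lambda^{-1}\|=\lambda^{-1}$ exactly. Hence each factor satisfies $|\Lambda^{-1}DF(F^kx)-\mathrm{Id}|\le L\lambda^{-1}\omega(|F^kx|)$, with no spectral weight appearing. Since $|F^kx|\le C(\lambda+\epsilon)^k|x|$ after localizing, the relevant sum obeys
\[
\sum_k\omega\bigl(c\rho^k|x|\bigr)\lesssim\int_0^{|x|}\frac{\omega(s)}{s}\,ds=\omega_{\rm B}(|x|)
\]
by comparison with the Dini integral. This gives uniform convergence of the product and $D\Phi(0)=\mathrm{Id}$.

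Alternatively, I would transcribe the isotropic proof of Theorem~\ref{TH3}\ref{TH3-2}. Only the bound $|\Lambda^{-k}|=\lambda^{-k}$ (rather than the diagonal structure) is used there, so the argument goes through verbatim. This route would also give $\Phi,\Phi^{-1}\in C_{1,\omega_{\rm B}}$.

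For the modulus of $D\Phi$, fix $x,y$ with $|x-y|=t$. Split the telescoping sum at the first index $N$ where $\rho^N\sim t$:
\begin{itemize}
\item for $k<N$, use $|DF(F^kx)-DF(F^ky)|\le L\omega(C|F^kx-F^ky|)$ together with the uniform boundedness of the partial products;
\item for $k\ge N$, use the tail bound.
\end{itemize}
Both pieces are $\lesssim\omega_{\rm B}(t)$. The inverse $\Phi^{-1}$ is handled by the inverse function theorem, since composition and inversion preserve the modulus $\omega_{\rm B}$ up to constants.

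\textbf{Necessity (Item \ref{THfocus-2}) and optimality.} The isotropic counterexamples have an invariant axis, which rotation destroys, so I would instead use a radial construction. Take
\[
F(z)=\mu z\,\bigl(1+g(|z|)\bigr),
\]
where $g$ is real, $g(0)=0$, and $g$ is built from $\omega$ (e.g.\ $g(r)=\omega(r)$ suitably smoothed, so that $F\in C_{1,\omega}$). The concavity of $\omega$ gives $|Dg(r)\,r|\lesssim\omega(r)$, which controls the $C_{1,\omega}$ norm.

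Then $F$ maps circles to circles and commutes with rotations. Its radial part $r\mapsto\lambda r(1+g(r))$ is a one-dimensional contraction with modulus $\omega$, for which non-Dini yields
\[
\prod_k\bigl(1+g(r_k)\bigr)\to0 \quad\text{or}\quad \prod_k\bigl(1+g(r_k)\bigr)\to\infty,
\]
i.e.\ $\lambda^{-n}|F^n z|/|z|$ diverges.

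Suppose a $C^1$ conjugacy $\Phi$ with $D\Phi(0)=\mathrm{Id}$ existed. Then $\Lambda^{-n}F^n(z)=\Lambda^{-n}\Phi^{-1}(\Lambda^n\Phi(z))$, and this tends to $\Phi(z)$ because $D\Phi^{-1}(0)=\mathrm{Id}$. Taking moduli, $\lambda^{-n}|F^nz|\to|\Phi(z)|\in(0,\infty)$ for $z\ne0$, which contradicts the divergence of the product. This is Sternberg's one-dimensional mechanism applied to the radial component, and it works for every $\theta$.

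For optimality of $\omega_{\rm B}$, take the same rotationally symmetric $F$ under the Dini condition. The conjugacy is forced to be $\Phi(z)=z\,h(|z|)$ with
\[
h(r)=\prod_{k\ge0}\bigl(1+g(r_k)\bigr),\qquad 1-h(r)\asymp\int_0^r\frac{\omega(s)}{s}\,ds .
\]
Normalized conjugacies differ from $\Phi$ only by centralizer elements, which are linear in this radial setting, or which I would show are negligible. Then $|D\Phi(r)-D\Phi(0)|\gtrsim\omega_{\rm B}(r)$, which forces $\omega_{\rm B}=\mathcal O(\eta)$.

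\textbf{Main obstacle.} I expect the hardest step to be the optimality claim: showing that \emph{every} normalized conjugacy inherits the lower bound. This requires controlling the $C^1$ centralizer of $\Lambda$ (the complex-linear maps, plus nonlinear ones that must be excluded using tangency to the identity together with the contraction). A secondary difficulty is making the radial $g$ have exactly modulus $\omega$ for $DF$, including at the origin, where $z\mapsto z\,g(|z|)$ must be $C_{1,\omega}$.
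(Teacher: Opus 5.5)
Your proposal is sound and follows the paper's proof essentially step for step. Sufficiency and the $\omega_{\rm B}$ estimate come from the conformal product/telescoping argument, using $|\Lambda^{\pm n}|=\lambda^{\pm n}$ in the Euclidean norm. The counterexample and optimality both use the radial map $F(x)=(1+\widehat\omega(|x|))\Lambda x$, where the averaged modulus $\widehat\omega$ of Lemma~\ref{prop21} plays the role of your ``suitably smoothed'' $g$.

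The step you flag as the main obstacle is a one-liner in the paper, and no centralizer analysis is needed. For any normalized $C^1$ linearization, $\Phi-\Lambda^{-n}F^n=\Lambda^{-n}(\Phi\circ F^n-F^n)\to0$, because $\Phi(z)-z=o(|z|)$ and $|F^nz|\leqslant C\lambda^n|z|$. Hence $\Phi$ is the radial conjugacy $p(|x|)x$. Testing $D\Phi(x)$ on a unit vector orthogonal to $x$ then gives $|D\Phi(x)-\mathrm{Id}|\geqslant p(|x|)-1\gtrsim\omega_{\rm B}(|x|)$.
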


	This provides a complete characterization of local $C^1$ linearizability in Case~\ref{case-P-focus}.

	\subsubsection{Case~\ref{case-P-degenerate-node}: Hyperbolic Attracting Degenerate Nodes}
	In Case~\ref{case-P-degenerate-node}, the eigenvalues still have equal modulus. Since Chaperon's spectral criterion \cite{Cha02} depends solely on the spectrum and ignores the Jordan block structure, it again yields local $C^1$ linearizability under $C^{1,\alpha}$ regularity for all $0<\alpha\leqslant 1$ (see also Hartman \cite{Har60} for the $C^{1,1}$ case, and Newhouse \cite{New17}). On the H\"older scale, this degenerate node is therefore completely indistinguishable from the hyperbolic focus treated above.
	
	The modulus-of-continuity framework reveals, however, that this coincidence is an artifact of the H\"older scale: \textit{a nontrivial Jordan block requires an additional squared-logarithmic weight.}
	
	Accordingly, we introduce the following \textbf{squared-logarithmically weighted Dini condition}:
	\begin{equation}\label{eq:log2dini}\tag{$\mathrm{WD}_{\rm L}$}
		\int_0^1\frac{\omega(t)}t
		\left(\log\frac et\right)^2dt<+\infty.
	\end{equation}
	This condition is strictly stronger than the classical Dini condition, although both are satisfied by every H\"older modulus $\omega(t)=t^\alpha$ with $0<\alpha\leqslant1$. For comparison, if $\omega(t)=\left(\log(e/t)\right)^{-p}$ near $0$ with $p>0$, the classical Dini condition holds if and only if $p>1$, whereas~\eqref{eq:log2dini} holds if and only if $p>3$.
	
	In terms of condition~\eqref{eq:log2dini}, our fifth main result provides a complete characterization of local $C^1$ linearizability in Case~\ref{case-P-degenerate-node}:
	
	\begin{mainthm}[Hyperbolic attracting degenerate nodes]\label{thm:jordan}
		Let $\omega$ be a modulus of continuity. Then the following statements hold for any prescribed $0<|\lambda|<1$:
		\begin{enumerate}[label=\textup{(\Roman*)}]
			\item\label{THjordan-1}
			If the weighted Dini condition~\eqref{eq:log2dini} holds, then every planar $C_{1,\omega}$ diffeomorphism $F$ in Case~\ref{case-P-degenerate-node} is locally $C^1$ linearizable.
			
			\item\label{THjordan-2}
			If the weighted Dini condition~\eqref{eq:log2dini} fails, then there exists a planar $C_{1,\omega}$ diffeomorphism $F$ in Case~\ref{case-P-degenerate-node} that is not locally $C^1$ linearizable.
		\end{enumerate}
		Equivalently, the class $C_{1,\omega}$ of planar hyperbolic diffeomorphisms in Case~\ref{case-P-degenerate-node} is locally $C^1$ linearizable if and only if the weighted Dini condition~\eqref{eq:log2dini} holds.

		Moreover, in Item~\ref{THjordan-1}, the linearizing diffeomorphism $\Phi$ and its inverse can be chosen of class $C_{1,\omega_{\mathrm C}}$, where, for sufficiently small $t>0$,
		\begin{equation}\label{eq:jordan-regularity-modulus}
			\omega_{\mathrm C}(t):=
			\int_0^t\frac{\omega(s)}s\left(\log\frac{et}s\right)^2ds.
		\end{equation}
		This estimate is optimal for each prescribed $\lambda$: there exists such a diffeomorphism $F$ for which every normalized local linearizing diffeomorphism $\Phi\in C_{1,\eta}$ satisfies
		\[
		\omega_{\mathrm C}(t)=\mathcal O(\eta(t))
		\quad\text{as }t\to0^+.
		\]
	\end{mainthm}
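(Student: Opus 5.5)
We treat the two directions separately and work throughout in Jordan coordinates $F(x,y)=\Lambda(x,y)^\top+f(x,y)$, with $f(O)=0$, $Df(O)=0$ and $Df\in C_{\omega}$. Without loss of generality $\lambda>0$; the case $\lambda<0$ follows by the same argument with signs tracked, or by passing to $F^2$ and then averaging. The sufficiency part \ref{THjordan-1} is obtained by the standard derivative-equation method. First take $\Phi=\mathrm{Id}+\phi$ with $\phi$ the $C^0$ Hartman--Grobman correction, and seek $D\Phi$ as the fixed point of
\[
D\Phi(z)=\Lambda^{-1}D\Phi(F(z))\,DF(z),
\]
normalized by $D\Phi(O)=\mathrm{Id}$. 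Write $P(z)=D\Phi(z)-\mathrm{Id}$. Iterating gives the telescoping series
\[
P(z)=\sum_{k\geqslant0}\Lambda^{-(k+1)}\bigl(DF(F^k z)-\Lambda\bigr)\,DF^{k}(z).
\]
Convergence of this series to a continuous matrix field, uniformly near $O$, is the heart of the matter. We have $|\Lambda^{\pm k}|\asymp k\lambda^{\pm k}$, $|DF^k(z)|\lesssim k\lambda^k\prod_{j<k}(1+C\omega(|F^jz|))$, and $|F^kz|\lesssim k\lambda^k|z|$, while $|DF(F^kz)-\Lambda|\leqslant L\omega(|F^kz|)$. The $k$-th term is therefore bounded by $Ck^2\,\omega(k\lambda^k|z|)$, up to the product factor, which is bounded once $\sum_k\omega(\lambda^k)<\infty$, i.e. under \eqref{dinicd}.

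Substituting $t=\lambda^k|z|$, so that $k\approx\log(|z|/t)/\log\lambda^{-1}\lesssim\log(e/t)$, and using concavity of $\omega$ to absorb the factor $k$ inside $\omega$, the tail sum compares with
\[
\int_0^{|z|}\frac{\omega(s)}{s}\Bigl(\log\frac{e|z|}{s}\Bigr)^2ds=\omega_{\mathrm C}(|z|),
\]
which is finite and tends to $0$ exactly under \eqref{eq:log2dini}. This yields $C^1$ linearizability, and $\Phi$ solves the conjugacy since $D\Phi$ is integrable. A Lipschitz-type comparison of two orbits, estimating $|DF^k(z)-DF^k(w)|$ and splitting the sum at $k_*\approx\log|z-w|/\log\lambda$ as in \cite{TXL24}, then gives $P\in C_{\omega_{\mathrm C}}$. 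The inverse is handled by the identical argument applied to $\Phi^{-1}\circ\Lambda=F\circ\Phi^{-1}$.

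For the necessity part \ref{THjordan-2} and the optimality, I would construct an explicit triangular counterexample
\[
F(x,y)=\bigl(\lambda x+y,\ \lambda y+g(y)\bigr)\quad\text{or}\quad F(x,y)=\bigl(\lambda x+y+h(y),\ \lambda y\bigr),
\]
with $h'(y)$ built from $\omega$ (for instance $h'(y)=\omega(|y|)$ symmetrized), so that $F\in C_{1,\omega}$. Along the orbit $(x_k,y_k)$ with $y_k=\lambda^ky_0$, the $x$-component of the formal series for $P$ picks up $\Lambda^{-(k+1)}$, whose off-diagonal entry is $\asymp k\lambda^{-k}$, times the $(1,2)$ entry of $DF^k\asymp k\lambda^k$, times $h'(y_k)$. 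Hence $P_{12}(0,y_0)\approx\sum_k k^2\,\omega(\lambda^k|y_0|)$, which diverges exactly when \eqref{eq:log2dini} fails, by the same integral comparison now taken as a lower bound, with all terms of one sign by construction. To turn this divergence into a contradiction, suppose a normalized $C^1$ conjugacy $\Phi$ exists. Differentiating $\Phi\circ F^n=\Lambda^n\circ\Phi$ shows that $D\Phi(O)=\mathrm{Id}$ forces the partial sums to converge, which is impossible. The non-normalized case reduces to the normalized one via the commutant remark made before Theorem~\ref{TH1}.

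For the optimality of $\omega_{\mathrm C}$ when \eqref{eq:log2dini} holds, the same example gives $P_{12}(0,y)\gtrsim\omega_{\mathrm C}(|y|)$ while $P(O)=0$, so any modulus $\eta$ of $D\Phi$ must satisfy $\omega_{\mathrm C}=\mathcal O(\eta)$. I expect the main obstacle to be the sign control and lower bound in the counterexample: the entries of $DF^k$ mix the $x$- and $y$-contributions, so I must check that the $k^2$ weight genuinely survives and is not cancelled by lower-order cross terms, and that the product factor stays bounded below, given that the Dini condition holds but \eqref{eq:log2dini} fails. The second delicate point is the sharp Hölder-type upper estimate, specifically the splitting argument that produces exactly $\log(et/s)^2$ rather than a cruder weight.
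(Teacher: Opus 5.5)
\textbf{The main gap: your counterexamples cannot produce the squared logarithm.} Write $\Lambda=\lambda(I+N)$ with $N=\lambda^{-1}\begin{pmatrix}0&1\\0&0\end{pmatrix}$. For any perturbation $E$,
\[
\Lambda^{-(k+1)}E\Lambda^{k}=\lambda^{-1}\bigl(I-(k+1)N\bigr)E\bigl(I+kN\bigr),
\qquad NEN=\lambda^{-2}E_{21}\begin{pmatrix}0&1\\0&0\end{pmatrix}.
\]
So the only term of order $k^2$ is $-k(k+1)NEN$, and it is carried \emph{only} by the lower-left entry $E_{21}=\partial_1 f_2$. The factor $k$ from $\Lambda^{-(k+1)}$ acts on row two of $E\,DF^k$, and $(DF^k)_{12}\asymp k|\lambda|^k$ reaches row two only through $E_{21}$.

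Both of your maps are upper triangular, so $E_{21}\equiv0$. Your estimate $P_{12}(0,y_0)\approx\sum_k k^2\omega(\lambda^k|y_0|)$ is therefore wrong: the weight is $1$ for the $h$-example and $k$ for the $g$-example. Both examples are in fact linearizable below the threshold:
\begin{itemize}
\item For $F=(\lambda x+y+h(y),\lambda y)$, the map $\Phi=(x+\phi(y),y)$ with $\phi(y)=\sum_{k\geqslant0}\lambda^{-(k+1)}h(\lambda^k y)$ is an explicit linearization. It is $C^1$ under the plain Dini condition.
\item For $F=(\lambda x+y,\lambda y+g(y))$, first linearize the $y$-map by Lemma~\ref{DINILEMMA}. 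This reduces $F$ to the previous form, with $h$ replaced by a function whose derivative has modulus $\omega_{\rm B}$. Hence this $F$ is $C^1$ linearizable once $\int_0^1\frac{\omega(s)}{s}\log\frac1s\,ds<+\infty$.
\end{itemize}
For $\omega(t)=(\log(e/t))^{-p}$ with $2<p\leqslant 3$, \eqref{eq:log2dini} fails, yet both examples are $C^1$ linearizable. Your optimality claim collapses for the same reason: these maps only force $\omega_{\rm B}$ or single-log lower bounds, not $\omega_{\mathrm C}$.

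\textbf{A second, independent gap.} The step ``$D\Phi(O)=\mathrm{Id}$ forces the partial sums to converge'' fails for a Jordan block. From
\[
\Lambda^{-n}DF^n(z)=\Lambda^{-n}D\Phi(F^nz)^{-1}\Lambda^n\,D\Phi(z),
\]
conjugation by $\Lambda^n$ can amplify $D\Phi(F^nz)-\mathrm{Id}$ by a factor $n^2$, again through the $(2,1)$ entry. So $D\Phi(F^nz)\to\mathrm{Id}$ alone gives nothing. The paper handles both problems as follows.
\begin{itemize}
\item It perturbs the \emph{second} component by $x_1$-dependent bumps centered on the linear orbit $z_n=\Lambda^n(0,\xi)^\top$. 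The bumps vanish together with their $x_2$-derivative at the centers, so $F(z_n)=z_{n+1}$ and $DF(z_n)$ has lower-left entry $\lambda^2\widehat\omega(r_n)$.
\item With $P=\operatorname{diag}(1,\lambda)$, the matrices $H_n=P^{-1}D\Phi(z_n)P\to\mathrm{Id}$ satisfy the exact recurrence $H_{n+1}\begin{pmatrix}1&1\\e_n&1\end{pmatrix}=\begin{pmatrix}1&1\\0&1\end{pmatrix}H_n$.
\item Three successive tail summations (of $c_n$, then of $\tau_n=d_n-a_n$, then of $b_n$) use only $H_n\to\mathrm{Id}$ and force $\sum_n (n+1)^2e_n<+\infty$.
\item Choosing radii $r_n=c|z_n|$, the same chain gives $\omega_{\mathrm C}(|z_n|)\lesssim|b_n|\lesssim\eta(|z_n|)$, which is the optimality bound.
\end{itemize}
This construction works directly for $\lambda<0$. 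Passing to $F^2$, as you suggest, would change the prescribed linear part.

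\textbf{Sufficiency.} Your sufficiency argument is the paper's route: the series for $\lim_n\Lambda^{-n}F^n$, with the weight $(k+1)^2$ coming from $|\Lambda^{\pm k}|$. Three details need fixing.
\begin{itemize}
\item The bound $|DF^k|\lesssim k|\lambda|^k$ is not a consequence of the Dini condition. A lower-left entry of size $c/j^2$, which is compatible with a Dini modulus, makes $DF^k$ grow like $k^{\alpha}|\lambda|^k$ with $\alpha>1$. The correct discrete Gronwall bound has factors $1+C(j+1)\omega(q^jr)$, which is fine only because \eqref{eq:log2dini} is assumed.
\item Remove the $k$ inside $\omega(k\lambda^k|z|)$ via $k|\lambda|^k\leqslant Cq^k$, not via concavity; concavity would produce a $k^3$ weight.
\item Do not split the modulus estimate at $k_*$. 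Set $B_j=\Lambda^{-(j+1)}Df(F^j\cdot)\Lambda^j$; the ordered-product telescoping gives directly $|B_j(x)-B_j(y)|\lesssim(j+1)^2\omega(q^j|x-y|)$, whose sum is $\asymp\omega_{\mathrm C}(|x-y|)$. Bounding the tail by sup norms would only give $\int_0^t\frac{\omega(s)}{s}(\log\frac es)^2ds$, which is strictly worse.
\end{itemize}
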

	\begin{remark}
We remark that $ \omega_{\mathrm C}(t) $
is not equivalent to $\int_0^t \frac{\omega(s)}s\left(\log\frac{e}s\right)^2ds$ (and thus the inner $\log t$ factor cannot be absorbed), which can be verified simply by testing the H\"older case.
	\end{remark}

	The appearance of the squared logarithm in~\eqref{eq:log2dini} is perhaps \textit{unexpected}. It reflects the dynamical interaction between the superdiagonal entry $1$ of the unperturbed Jordan block and the lower-left entry of a general derivative perturbation (see Section~\ref{PTJORDAN} for details). Thus, the interplay between both off-diagonal positions dictates this optimal regularity threshold. In higher dimensions, one naturally expects the optimal criterion to feature a logarithmic factor whose exponent depends on the dimension (or the maximal size of the Jordan blocks). Determining this dependence presents an intriguing problem worthy of further investigation.

	\subsubsection{Case~\ref{case-S-saddle}: Hyperbolic Saddle Points}
	We now turn to Case~\ref{case-S-saddle}, corresponding to hyperbolic saddle points. Hartman \cite[Theorem~(IV)]{Har60} established local $C^1$ linearizability for planar $C^{1,1}$ saddle points. For planar $C^r$ diffeomorphisms with integer $r\geqslant2$, Stowe \cite[Theorems~2 and~3]{Sto86} investigated the regularity of linearizing transformations and normal forms. In the $C^{1,1}$ case, Zhang--Zhang--Jarczyk \cite[Corollary~2]{ZZJ14} obtained $C^{1,\beta}$ linearization with estimates arbitrarily close to the optimal exponent. In the H\"older category, Zhang--Zhang \cite[Theorems~3 and~4]{ZZ14} established $C^{1,\beta}$ linearization with sharp estimates for every $\alpha\in(0,1]$. Newhouse \cite[Theorem~1.5]{New17} also obtained $C^{1,\beta}$ linearization under $C^{1,\alpha}$ regularity, for $0<\alpha<1$ and some $\beta\in(0,\alpha)$, in real Banach spaces admitting $C^{1,\alpha}$ bump functions, subject to an $\alpha$-hyperbolicity condition. This condition holds for every planar hyperbolic saddle and every $\alpha\in(0,1)$, since its stable and unstable subspaces are both one-dimensional.
	
	As discussed earlier, H\"older regularity is insufficient to characterize general $C^1$ systems. While real diagonalizable contractions with distinct eigenvalue moduli require the spectral threshold $\alpha>\alpha_0$, planar saddle points impose no positive lower threshold on $\alpha$. The natural candidate in the modulus-of-continuity framework is therefore again the classical Dini condition~\eqref{dinicd}. The degenerate-node case nevertheless shows that the absence of a positive H\"older threshold does not, by itself, determine the exact Dini weight. Establishing the saddle criterion therefore requires both a sufficiency proof at the Dini level and the construction of a counterexample for each prescribed pair of eigenvalues whenever the condition fails.
	
	Our sixth main result provides a definitive answer to this question:
	
	\begin{mainthm}[Hyperbolic saddle points]\label{TH4}
		Let $\omega$ be a modulus of continuity. Then the following statements hold for any prescribed $0<|\lambda_1|<1<|\lambda_2|$:
		\begin{enumerate}[label=\textup{(\Roman*)}]
			\item \label{TH4-I}		
			If the Dini condition \eqref{dinicd} holds, then every planar $C_{1,\omega}$ diffeomorphism $F$ in Case \ref{case-S-saddle} is locally $C^1$ linearizable.
			
			\item \label{TH4-II}
			If the Dini condition \eqref{dinicd} fails, then there exists a planar $C_{1,\omega}$ diffeomorphism $F$ in Case \ref{case-S-saddle} that is not locally $C^1$ linearizable.
		\end{enumerate}
		Equivalently, the class $C_{1,\omega}$ of planar hyperbolic diffeomorphisms in Case \ref{case-S-saddle} is locally $C^1$ linearizable if and only if the Dini condition \eqref{dinicd} holds.

		Moreover, in Item~\ref{TH4-I}, the linearizing diffeomorphism $\Phi$ and its inverse can be chosen of class $C_{1,\omega_{\mathrm D}}$, where
\[
\delta:=
			\frac{\min\{-\log|\lambda_1|,\log|\lambda_2|\}}
			{\log|\lambda_2|-\log|\lambda_1|}\in(0,1/2]
\]
		and, for sufficiently small $t>0$,
		\begin{equation}\label{eq:saddle-regularity-modulus}
			\omega_{\mathrm D}(t):=\omega_{\rm B}(t^\delta)
			=\int_0^{t^\delta}\frac{\omega(s)}s ds.
		\end{equation}
		This estimate is optimal for each prescribed pair $(\lambda_1,\lambda_2)$: there exists such a diffeomorphism $F$ for which every normalized local linearizing diffeomorphism $\Phi\in C_{1,\eta}$ satisfies
		\[
		\omega_{\mathrm D}(t)=\mathcal O(\eta(t))
		\quad\text{as }t\to0^+.
		\]
	\end{mainthm}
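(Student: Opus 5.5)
The plan has three parts: sufficiency under \eqref{dinicd}, a counterexample for each prescribed pair $(\lambda_1,\lambda_2)$ when \eqref{dinicd} fails, and the $C_{1,\omega_{\rm D}}$ estimate with its optimality.

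\textbf{Sufficiency.} We straighten the invariant manifolds and then linearize in two successive steps.

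First, by the stable/unstable manifold theorem, the local invariant curves $W^s$ and $W^u$ are $C_{1,\omega}$ graphs tangent to the axes. A $C_{1,\omega}$ change of coordinates therefore makes both coordinate axes invariant, so
\[
F(x,y)=\bigl(\lambda_1x+f_1(x,y),\ \lambda_2y+f_2(x,y)\bigr),\qquad f_1(0,y)=0,\ f_2(x,0)=0.
\]

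Second, we linearize the one-dimensional restrictions to the axes. On $W^s$ the map is a contraction, and Dini regularity makes $\prod_n F'(x_n)/\lambda_1$ converge. Hence the Sternberg--Kuczma limit $h(x)=\lim\lambda_1^{-n}F^n(x)$ exists and is $C^1$, with modulus $\omega_{\rm B}$. On $W^u$ we apply the same argument to $F^{-1}$.

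Third, we flatten the dynamics transversally. We look for a conjugacy $\Phi=\mathrm{Id}+\phi$ with $\phi=O(|(x,y)|\omega(|(x,y)|))$. The natural construction is to solve the equation for $D\Phi$ as a fixed point. Along the stable direction we write the $x$-component of $D\Phi$ as a forward series $\sum_{n\ge0}\lambda_1^{-n-1}(\dots)\circ F^n$. Along the unstable direction we use the backward series for the $y$-component. The cross terms decay because $f_1$ vanishes on $\{x=0\}$, so $|\partial_y f_1(x,y)|\lesssim\omega(|x|)$, and symmetrically $|\partial_x f_2(x,y)|\lesssim\omega(|y|)$.

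The key estimate is along an orbit passing near the saddle. For a point $(x,y)$ with $|x|\sim1$ and $y$ tiny, the orbit spends $N\sim\log|y|/\log|\lambda_2|^{-1}$ steps near the origin, and the $x$-coordinates decay geometrically. The relevant sums therefore become $\sum_n\omega(|\lambda_1|^n)\approx\int\omega(s)/s\,ds$, which is finite by \eqref{dinicd}. This yields a continuous $D\Phi$, so Item \ref{TH4-I} holds.

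\textbf{Counterexample.} When \eqref{dinicd} fails, we reuse the isotropic invariant-axis mechanism along the stable axis. We take
\[
F(x,y)=\bigl(\lambda_1x(1+\omega(|x|)\chi),\ \lambda_2y\bigr),
\]
with a cutoff $\chi$. Then $F|_{W^s}$ has $\prod_n(1+\omega(|x_n|))=\infty$, so $F|_{W^s}$ is not $C^1$ linearizable in dimension one. Any $C^1$ conjugacy $\Phi$ would preserve $W^s$ and restrict to a $C^1$ conjugacy of $F|_{W^s}$ with $\lambda_1$, which is a contradiction. We must check that this $F$ is $C_{1,\omega}$. Since $x\omega(|x|)$ has derivative of modulus $\lesssim\omega$ by concavity, this holds.

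\textbf{Regularity $\omega_{\rm D}$ and its optimality.} For two points at distance $t$, we split the orbit at the exit time. The loss arises in the region $|x|\sim t^{\delta'}$ and $|y|\sim t^{1-\delta'}$, where $\delta'$ balances $\log|\lambda_1|$ against $\log|\lambda_2|$. Balancing gives the exponent $\delta$ and the bound $\omega_{\rm B}(t^\delta)$.

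For optimality, we take $F=\Lambda+(\lambda_1x\,\omega(|y|)\psi,\,0)$ or its symmetric counterpart, choosing whichever of $\lambda_1,\lambda_2^{-1}$ attains the min in $\delta$. For this map, $\partial_y\Phi$ can be computed explicitly as a series. Evaluating it at points separated by distance $t$ shows an oscillation $\gtrsim\omega_{\rm B}(t^\delta)$.

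\textbf{Main obstacle.} The hardest step is the sharp two-sided analysis near the corner. We must show that the cross-coupling contributes exactly $\omega_{\rm B}(t^\delta)$, without the spectral weights that appear in the contracting anisotropic case. For the upper bound, we would first try to bound each series by a dyadic decomposition in the exit time.
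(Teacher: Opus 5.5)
\textbf{Gap in the third step of Item~(I).} The gap is in the transverse step of Item~(I), which is the whole difficulty of the saddle case, and the mechanism you propose for it fails. Your weight $\lambda_1^{-n-1}$ means you are expanding $D\Phi_1=\lim_n\lambda_1^{-n}e_1^{\top}DF^n$. Its $y$-entry collects terms $\lambda_1^{-k-1}\,\partial_yf_1(p_k)\,\partial_y(\pi_2F^k)(p)$, with $\partial_y(\pi_2F^k)\approx\lambda_2^k$. The bound $|\partial_yf_1|\lesssim\omega(|x|)$ only gives decay $\omega(|x_k|)\approx\omega(|\lambda_1|^k)\geqslant c\,|\lambda_1|^k$, by concavity. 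So whenever $\partial_yf_1(x,0)$ has the full size $\omega(|x|)$, the $k$-th term is $\gtrsim|\lambda_2|^k$, and the series diverges even on $W^s$.

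In fact $\partial_y\Phi_1$ is governed by $\partial_y\Phi_1(Fp)=\bigl(\lambda_1\partial_y\Phi_1(p)-\partial_x\Phi_1(Fp)\,\partial_yf_1(p)\bigr)/\partial_yF_2(p)$. Its bounded solution is a series over the \emph{backward} orbit, damped by $(\lambda_1/\lambda_2)^k$. Any direct scheme therefore mixes forward and backward data on orbits that enter and leave the neighbourhood, and continuity across $W^s\cup W^u$ is exactly what must be proved. Your claim that the relevant sums reduce to $\sum_n\omega(|\lambda_1|^n)$ is the conclusion, not a consequence of the setup.

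The paper supplies the missing idea as follows. After a cutoff extension, it uses the global stable and unstable foliations and shows their tangent fields have modulus $\omega$ (projective contraction with $\vartheta L<1$). It then proves the holonomies are $C^1$ via $Dh(p)=\exp\sum_{k\geqslant0}\bigl(a^u(G^kp)-a^u(G^kh(p))\bigr)$. Since $p$ and $h(p)$ lie on one stable leaf, each term is $\mathcal O(\overline\omega(C\rho^k))$, and no factor $|\lambda_2/\lambda_1|^k$ ever appears. The grid map $\Theta$ then decouples $G$ into $f_s\times f_u$, and the one-dimensional Dini lemma finishes. Your $\omega_{\mathrm D}$ upper bound inherits the gap and is only a scaling heuristic. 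In the paper it comes from the same holonomy formula, which yields $\sum_k\min\{\omega(|\lambda_2|^kt),\omega(|\lambda_1|^k)\}$, split at $N\approx\log(1/t)/\log(|\lambda_2|/|\lambda_1|)$.

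\textbf{Item~(II) and optimality.} Your route for Item~(II) obstructs the tangential derivative on $W^s$. This is a valid alternative to the paper's, which obstructs the transverse derivative $\partial_2\Phi_2$ along the stable axis via $F_\varepsilon=(\lambda_1x_1,\lambda_2x_2+\varepsilon\lambda_2R)$. However, your regularity claim is false as stated. If $\omega$ is concave with kinks accumulating at $0$, then $x\omega(|x|)$ is not even $C^1$: its derivative jumps by $x(\omega'_-(x)-\omega'_+(x))$ at each kink. Use $\operatorname{sgn}(x)\Omega(|x|)=x\widehat\omega(|x|)$ instead, whose derivative is $\omega(|x|)$; your product argument then works.

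For optimality the same smoothing is needed. In addition, $\lambda_1x\,\widehat\omega(|y|)\psi$ is $C_{1,\omega}$ only with a cone cutoff such as $\psi=\chi(x/y)$, which gives $|x\,\widehat\omega'(|y|)|\leqslant2\omega(|y|)$. A plain cutoff near $O$ fails near $\{y=0,\ x\neq0\}$.

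More seriously, nothing can be ``computed explicitly'' for an arbitrary normalized $\Phi\in C_{1,\eta}$, because normalized linearizations of a saddle are not unique. For example, $(x,y)\mapsto(x,\,y+x^2y|y|^{b-1})$ with $\lambda_1^2|\lambda_2|^{b-1}=1$ commutes with $\Lambda$ and is tangent to the identity. The paper uses only forced data: $\Phi_1(\xi,0)=\xi$ and $\partial_1\Phi_1(0,\zeta)=Q^{-1}$. It inserts these into $\Phi_1(F_\varepsilon^np_n)=\lambda_1^n\Phi_1(p_n)$ along the entry--exit segment from $p_n=(\xi,\lambda_2^{-n}\zeta)$ to $(u_n,\zeta)$. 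This gives $C^{-1}\omega_{\mathrm D}(|\lambda_1|^n)\leqslant1-P_n/Q\leqslant C\bigl(|\lambda_2|^{-n}+\eta(C|\lambda_1|^n)\bigr)$. Note also that the oscillation sits in $\partial_x\Phi_1$ near $W^u$, not in $\partial_y\Phi$.
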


	For $\omega(t)=t^\alpha$ with $0<\alpha\leqslant1$, \eqref{eq:saddle-regularity-modulus} gives $\omega_{\mathrm D}(t)=\alpha^{-1}t^{\alpha\delta}$, recovering the optimal exponent in \cite[Theorems~3 and~4]{ZZ14}. Thus, although the Dini criterion for existence is independent of the eigenvalues, the optimal modulus of the conjugacy retains a spectral dependence. For $\omega(t)=(\log(e/t))^{-p}$ with $p>1$, one has $\omega_{\mathrm D}(t)\asymp(\log(e/t))^{1-p}$, which is equivalent to the modulus $\omega_{\rm B}$ in this particular scale.

	Theorem~\ref{TH4} provides  a complete characterization  of $C^1$ linearizability in Case \ref{case-S-saddle}. Together with Theorems~\ref{TH2}, \ref{thm:focus}, and~\ref{thm:jordan}, and the corresponding results for repellers obtained by passing to the inverse mapping, this brings the uniform local $C^1$ linearizability problem for all planar hyperbolic real Jordan forms to a \textit{definitive resolution}. Each criterion is optimal for every prescribed linear part of the corresponding type.
	
	As a byproduct of the analysis leading to Theorem~\ref{TH4}, Lemma~\ref{DINILEMMA} establishes the sufficiency of the Dini condition \eqref{dinicd} for local $C^1$ linearizability in dimension one. Furthermore, the necessity of this condition follows readily from a direct adaptation of the counterexample constructed in Theorem~\ref{TH4}--\ref{TH4-II}. We also remark that the optimal regularity of the conjugacy beyond $C^1$ in this setting readily follows from the two-dimensional case by a completely analogous argument. We therefore state the following result concerning one-dimensional $C^1$ diffeomorphisms $F\colon \mathbb{R}\to\mathbb{R}$ satisfying
	\begin{equation}\label{th5tj}
		F(0)=0,\quad F'(0)=\lambda,\quad 0<|\lambda|<1.
	\end{equation}
	This result completes the characterization for general moduli of continuity beyond the classical H\"older regime \cite{Ste57a,Kuc68} (see Table \ref{tab:one-dimensional-linearization-summary}); we do not, however, regard this as a principal contribution of the present paper, in view of its substantial technical overlap with the existing literature.

	\begin{mainthm}\label{TH5}
		Let $\omega$ be a modulus of continuity. Then the following statements hold for any prescribed $0<|\lambda|<1$:
		\begin{enumerate}[label=\textup{(\Roman*)}]
			\item \label{TH5-I}		
			If the Dini condition \eqref{dinicd} holds, then every one-dimensional $C_{1,\omega}$ diffeomorphism $F$ satisfying \eqref{th5tj} is locally $C^1$ linearizable.
			
			\item \label{TH5-II}
			If the Dini condition \eqref{dinicd} fails, then there exists a one-dimensional $C_{1,\omega}$ diffeomorphism $F$ satisfying \eqref{th5tj} that is not locally $C^1$ linearizable.
		\end{enumerate}
		Equivalently, the class $C_{1,\omega}$ of one-dimensional diffeomorphisms satisfying \eqref{th5tj} is locally $C^1$ linearizable if and only if the Dini condition \eqref{dinicd} holds.
		
			Moreover, in Item \ref{TH5-I}, the linearizing diffeomorphism can be chosen of class $C_{1,\omega_{\rm B}}$. This estimate is optimal for each prescribed $\lambda$: there exists such a diffeomorphism $F$ for which every normalized local linearizing diffeomorphism $\Phi\in C_{1,\eta}$ satisfies 
		\[
		\omega_{\rm B}(t)=\mathcal{O}(\eta(t))\quad\text{as } t\to0^+.
		\]
	\end{mainthm}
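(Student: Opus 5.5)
The proof splits into four parts: sufficiency of \eqref{dinicd}, the $C_{1,\omega_{\rm B}}$ regularity of the conjugacy, a counterexample when \eqref{dinicd} fails, and optimality of $\omega_{\rm B}$.

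\textbf{Sufficiency.} I would invoke Lemma~\ref{DINILEMMA}, but the mechanism is the classical Koenigs--Kuczma construction, so I indicate it. After composing with $x\mapsto -x$ if $\lambda<0$, set
\[
\Phi(x)=\lim_{n\to\infty}\lambda^{-n}F^n(x).
\]
The derivatives are
\[
\lambda^{-n}(F^n)'(x)=\prod_{k=0}^{n-1}\frac{F'(F^k x)}{\lambda}.
\]
Each factor is $1+\mathcal O(\omega(|F^kx|))$, and $|F^kx|\leqslant C\mu^k|x|$ for any $\mu\in(|\lambda|,1)$.
\begin{itemize}
\item The products converge uniformly because $\sum_k\omega(\mu^k|x|)\lesssim\int_0^{|x|}\omega(s)s^{-1}ds$. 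This step uses only concavity and monotonicity of $\omega$, comparing the sum with the integral on dyadic-type blocks $[\mu^{k+1}|x|,\mu^k|x|]$.
\item Hence $\Phi$ is $C^1$, $\Phi'(0)=1$, and $\Phi\circ F=\lambda\Phi$ follows directly from the limit.
\end{itemize}

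\textbf{Regularity $C_{1,\omega_{\rm B}}$.} I would estimate $\log\Phi'(x)-\log\Phi'(y)$ as a sum over $k$ of $\log(F'(F^kx)/\lambda)-\log(F'(F^ky)/\lambda)$. Take $0<y<x$ (the same-side case) and let $N$ be the first index with $|F^Nx|\lesssim|x-y|$.
\begin{itemize}
\item For $k\geqslant N$, bound each term separately by $\omega(|F^kx|)+\omega(|F^ky|)$. This tail is $\lesssim\int_0^{|x-y|}\omega(s)s^{-1}ds=\omega_{\rm B}(|x-y|)$.
\item For $k<N$, use $|F^kx-F^ky|\leqslant C\mu^k|x-y|$, which follows from bounded distortion (already established). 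This gives terms $\omega(\mu^k|x-y|)$, whose sum is again $\lesssim\omega_{\rm B}(|x-y|)$.
\end{itemize}
If $x$ and $y$ lie on opposite sides of $0$, the estimate follows from the triangle inequality through $0$, since $|x|,|y|\leqslant|x-y|$. The inverse $\Phi^{-1}$ is handled by the inverse function theorem, because $\omega_{\rm B}\geqslant c\,\omega$ by concavity and $\Phi'$ is bounded away from zero.

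\textbf{Counterexample.} I would adapt the invariant-axis construction of Theorem~\ref{TH4}--\ref{TH4-II}, following Sternberg. Take
\[
F(x)=\lambda x\bigl(1+c\,\omega(|x|)\bigr)\quad\text{near }0,
\]
smoothed suitably. A cleaner choice is to define $F'(x)=\lambda(1+c\,\omega(|x|))$ and integrate, so that $F\in C_{1,\omega}$ by subadditivity of $\omega$. Then $\lambda^{-n}(F^n)'(x)\geqslant\prod_k(1+c\,\omega(|F^kx|))$, and the orbit satisfies $|F^kx|\geqslant|\lambda|^k|x|$. Failure of \eqref{dinicd} then forces $\sum_k\omega(|\lambda|^k|x|)=\infty$, so the product diverges.

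Now suppose $\Phi$ is a $C^1$ conjugacy. Differentiating $\Phi\circ F^n=\lambda^n\Phi$ gives
\[
\Phi'(F^nx)\,(F^n)'(x)=\lambda^n\Phi'(x).
\]
Since $\Phi'(F^nx)\to\Phi'(0)\neq0$, the left side grows faster than $\lambda^n$, which is a contradiction. The main care needed is making $F$ genuinely $C_{1,\omega}$ with the correct sign behavior when $\lambda<0$; applying the argument to $|F^kx|$ handles this.

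\textbf{Optimality of $\omega_{\rm B}$.} Use the same $F$ with $c>0$ and Dini-integrable $\omega$. For a normalized $\Phi$,
\[
\Phi'(x)=\prod_{k\geqslant0}\frac{\lambda}{F'(F^kx)}\cdot\text{(normalization)},
\]
so
\[
\log\Phi'(x)-\log\Phi'(0)=-\sum_k\log\bigl(1+c\,\omega(|F^kx|)\bigr)\asymp-\omega_{\rm B}(|x|).
\]
This gives $|\Phi'(x)-\Phi'(0)|\gtrsim\omega_{\rm B}(|x|)$, hence $\omega_{\rm B}=\mathcal O(\eta)$. The uniqueness of normalized $C^1$ linearizations in one dimension follows from the same product formula, so the bound applies to every such $\Phi$.

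\textbf{Main obstacle.} I expect the hardest step to be the two-regime splitting estimate in the regularity part. It needs a bounded distortion argument for $|F^kx-F^ky|$ that remains valid under only the Dini hypothesis.
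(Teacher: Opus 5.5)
Your proposal is correct and follows essentially the paper's route: sufficiency and the $C_{1,\omega_{\rm B}}$ bound come from the Koenigs limit of Lemma~\ref{DINILEMMA}, and both the counterexample and the optimality use the map $x\mapsto\lambda\bigl(x+c\int_0^x\omega(|u|)\,du\bigr)$ with the divergent (respectively $\omega_{\rm B}$-comparable) product $\prod_k\bigl(1+c\,\omega(|F^kx|)\bigr)$, as in Lemma~\ref{Lemma28} and Theorem~\ref{TH4}--\ref{TH4-II}. Two minor slips. In the optimality step the formula should read $\Phi'(x)=\Phi'(0)\prod_k F'(F^kx)/\lambda$; you inverted it, but only the magnitude of $\log\Phi'(x)-\log\Phi'(0)$ is used, so the conclusion stands. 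Also, the two-regime split you flag as the main obstacle is unnecessary: $|F'|\leqslant\mu$ on the invariant interval gives $|F^kx-F^ky|\leqslant\mu^k|x-y|$ for every $k$, so the whole sum is directly bounded by $\sum_k\omega(\mu^k|x-y|)\lesssim\omega_{\rm B}(|x-y|)$.
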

	\begin{customrem}{G}\label{RE13}
		If $|\lambda|=1$ in \eqref{th5tj}, then $F$ is $C^0$ linearizable at the origin if and only if $F^2 = \mathrm{id}$ in a neighborhood of the origin, hence regularity alone does not guarantee local $ C^0 $ linearizability.
	\end{customrem}

	\subsubsection{Cases~\ref{case-S-mixed}, \ref{case-S-real-unit}, \ref{case-S-elliptic}, and~\ref{case-S-unit-jordan}: Non-hyperbolic Fixed Points}

	We finally consider Cases~\ref{case-S-mixed}, \ref{case-S-real-unit}, \ref{case-S-elliptic}, and~\ref{case-S-unit-jordan}, corresponding to the non-hyperbolic regime. The failure of linearization in the absence of hyperbolicity is a classical phenomenon. In the one-dimensional setting, Sternberg \cite[Remark~(a)]{Ste57a} observed that a mapping with derivative $1$ can be conjugate to its linear part only if it is the identity near the fixed point; for instance, the real-analytic diffeomorphism $x\mapsto x+x^3$ cannot be locally $C^0$ linearized at the origin. While the insufficiency of regularity without hyperbolicity is well understood, for completeness and to render our planar classification entirely self-contained, we record the following uniform statement and provide explicit polynomial constructions for every non-hyperbolic real Jordan form.
	
	\begin{mainthm}[Non-hyperbolic fixed points]\label{THNH}
		Let $\Lambda\in\mathrm{GL}(2,\mathbb{R})$ have at least one
		eigenvalue on the unit circle $S^1$.
		Then there exists a global real-analytic diffeomorphism
		$F\colon\mathbb{R}^2\to\mathbb{R}^2$ with polynomial components of
		degree at most three such that
		\[
		F(O)=O,\quad DF(O)=\Lambda,
		\]
		but $F$ is not locally $C^0$ (and hence not locally $C^1$) linearizable at $O$.
	\end{mainthm}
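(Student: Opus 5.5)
Fix the real Jordan form of $\Lambda$; conjugating by a fixed linear map does not affect local $C^0$ linearizability, so it suffices to treat the normal forms of Cases~\ref{case-S-mixed}--\ref{case-S-unit-jordan}. In every case we will add to $\Lambda$ a cubic (or quadratic) perturbation $N$ such that $F=\Lambda+N$ is a global polynomial diffeomorphism. To guarantee this, we use triangular maps of the form $F(x,y)=(\lambda_1x+p(y),\,\lambda_2 y+q(y))$ or $(\lambda_1x+p(x),\lambda_2y)$ whose components are strictly monotone in the triangular variable. The inverse of $t\mapsto at+bt^3$ with $ab>0$ is real analytic, so such an $F$ is a real-analytic diffeomorphism of $\mathbb{R}^2$; in the elliptic case we instead use a twist map written in polar form. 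The obstruction to $C^0$ linearization is always a topological invariant that the linear map possesses and $F$ lacks. The relevant invariants are: every orbit of $\Lambda^2$ on an eigenline with eigenvalue $\pm1$ is fixed, each point of the unit-circle part is (quasi)periodic or bounded, and the restriction to invariant circles is conjugate to a rotation by the same angle.

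For Cases~\ref{case-S-mixed} and \ref{case-S-real-unit}, take $F(x,y)=(\lambda_1x+\lambda_1x^3,\lambda_2 y+\lambda_2 y^3)$, replacing the second cubic by $0$ when $|\lambda_2|\ne1$. Suppose $H\circ F=\Lambda\circ H$ with $H$ a local homeomorphism. Then $H\circ F^2=\Lambda^2\circ H$ and $\Lambda^2$ is diagonal with entry $\lambda_1^2=1$. In Case~\ref{case-S-mixed}, the set of fixed points of $\Lambda^2$ near $O$ is the whole $x$-axis, an arc through $O$. On the other hand, $F^2$ has $O$ as an isolated fixed point near $O$, because on the $x$-axis $F^2(x,0)=(x+2x^3+\dots,0)$ and off it $|\lambda_2|^{\pm2}\ne1$ moves $y$. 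Since $H$ maps $\mathrm{Fix}(F^2)$ homeomorphically onto $\mathrm{Fix}(\Lambda^2)$ locally, we obtain a contradiction. In Case~\ref{case-S-real-unit}, $\Lambda^2=\mathrm{Id}$, so every point near $O$ would be fixed by $F^2$, which is false. This is the planar analogue of Remark~\ref{RE13}.

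For Case~\ref{case-S-unit-jordan}, take $F(x,y)=(\lambda x+y,\lambda y+\lambda y^3)$. Then $\Lambda^2=\begin{pmatrix}1&2\lambda\\0&1\end{pmatrix}$ has the $x$-axis as its local fixed set, while $F^2$ acts on the invariant $y$-coordinate by $y\mapsto y+2y^3+\dots$. Hence fixed points of $F^2$ must have $y=0$ and then $x$ arbitrary, so the fixed-point count does not distinguish them. We instead use orbit behaviour. Every orbit of $\Lambda^2$ with $y\ne0$ leaves every neighborhood in both time directions, since $x$ grows linearly. For $F^2$, the region $y>0$ small is mapped with $y$ increasing, and backward iterates satisfy $y_n\to0$ like $n^{-1/2}$, while $x_n$ drifts like $\sum y_k\sim\sqrt n$, which is unbounded. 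This does not separate the two maps either, so the cubic must be moved into the $x$-component: $F(x,y)=(\lambda x+y+\lambda x^3,\lambda y)$. Its fixed set for $F^2$ is $\{y=0,\ x=0\}$ locally, which is isolated, whereas $\Lambda^2$ has a fixed arc; this gives the contradiction as before.

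Case~\ref{case-S-elliptic} is the main obstacle. Here $\Lambda$ is a rotation by $\theta$, and every orbit is bounded and lies on a circle. Take, in complex notation, $F(z)=e^{i\theta}z(1+|z|^2)$, which is polynomial of degree three and a diffeomorphism since $r\mapsto r+r^3$ is increasing. All orbits with $z\ne0$ escape any small neighborhood under forward iteration, because $|F(z)|=|z|(1+|z|^2)$. By contrast, $\Lambda$ preserves each small disk $\{|z|<\varepsilon\}$, so every orbit starting in $H^{-1}$ of a small disk stays in a fixed neighborhood forever. Conjugacy transports this Lyapunov stability, which yields the contradiction. The argument is the same stability argument that also settles the $\lambda_i=1$ cases and can replace the fixed-point argument throughout.
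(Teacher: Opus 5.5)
Your arguments for Cases~\ref{case-S-mixed} and~\ref{case-S-real-unit} are correct; they compare fixed points of $F^2$ and $\Lambda^2$. Your argument for Case~\ref{case-S-elliptic} is also correct: it uses the paper's own map $e^{i\theta}z(1+|z|^2)$, with forward Lyapunov stability in place of the paper's ``no compact invariant set in a punctured disk''.

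\textbf{The gap is in Case~\ref{case-S-unit-jordan}.} For your final map $F(x,y)=(\lambda x+y+\lambda x^3,\lambda y)$, the claim that $O$ is an isolated fixed point of $F^2$ is false. Because $\lambda^2=1$, the second component of $F^2$ is $y\mapsto y$, so it places no constraint on $y$. Concretely:
\begin{itemize}
\item For $\lambda=1$ one has $F(t,-t^3)=(t,-t^3)$, so $F$ itself fixes the whole curve $y=-x^3$.
\item For $\lambda=-1$ one computes $F^2(x,y)=\bigl(x-2y+x^3+(x-y+x^3)^3,\,y\bigr)$. The equation $x^3+(x-y+x^3)^3-2y=0$ has $y$-derivative $-2\neq0$ at $O$, so by the implicit function theorem $\mathrm{Fix}(F^2)$ is a real-analytic arc $y=\psi(x)$ through $O$.
\end{itemize}
So $\mathrm{Fix}(F^2)$ and $\mathrm{Fix}(\Lambda^2)$ are both arcs through $O$, the comparison gives no contradiction, and Case~\ref{case-S-unit-jordan} is left unproved. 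Your closing sentence does not rescue it. Here $\Lambda$ is not Lyapunov stable in forward time, and neither is $\Lambda$ in Case~\ref{case-S-mixed} when $|\lambda_2|>1$. So forward stability cannot ``replace the fixed-point argument throughout''.

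Your map is in fact not linearizable, but proving it needs a different invariant. On each invariant line $\{y=c\}$, $F^2$ is a strictly increasing one-dimensional map with a unique, repelling fixed point; for $\lambda=1$ it is $x\mapsto x+c+x^3$ with fixed point $-c^{1/3}$. Backward orbits therefore converge monotonically to that fixed point, so $F^{-1}$ is Lyapunov stable at $O$. By contrast, $\Lambda^{-n}(0,c)$ escapes every neighborhood.

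The simplest repair is to put the cubic in both components: $F(x,y)=(\lambda x+y+\lambda x^3,\ \lambda y+\lambda y^3)$. This is still a triangular global real-analytic diffeomorphism with $DF(O)=\Lambda$. The second component of $F^2$ is $y+y^3+(y+y^3)^3$, which forces $y=0$; then $F^2(x,0)=\bigl(x+x^3+(x+x^3)^3,0\bigr)$ forces $x=0$. Hence $\mathrm{Fix}(F^2)=\{O\}$ while $\mathrm{Fix}(\Lambda^2)$ is the $x$-axis, and your Case~\ref{case-S-mixed} argument applies verbatim.

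For comparison, the paper avoids case analysis altogether. It uses the single map $F=\Lambda H$ with $H(x)=(1+|x|^2)x$, and the growth rate $\lim_n|\Lambda^nx|^{1/n}$ to show that $F$ has no nonempty compact invariant set in a small punctured disk. Meanwhile $\Lambda$ always has one arbitrarily close to $O$: a fixed point, a $2$-cycle, or an invariant ellipse.
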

	
	Consequently, in the absence of hyperbolicity, no degree of smoothness---even real analyticity---can guarantee local linearization, even in the topological category. Together with the preceding theorems, this completes the classification of planar local $C^1$ linearizability across all real Jordan forms.

\subsection{Summary of the Optimal Results}\label{seccon}

\begingroup
\setlength{\tabcolsep}{4pt}
\setlength{\extrarowheight}{0pt}
\renewcommand{\arraystretch}{1}
\renewcommand{\tabularxcolumn}[1]{m{#1}}
\providecommand{\PlanarSummaryCell}[1]{}
\renewcommand{\PlanarSummaryCell}[1]{%
	\parbox[c]{\linewidth}{%
		\centering\vspace*{4pt}%
		#1\par\vspace*{4pt}%
	}%
}

The results of this paper provide a \textit{complete characterization} of the
regularity assumptions guaranteeing local $C^1$ linearizability for
planar diffeomorphisms with a prescribed linear part.
Table~\ref{tab:planar-linearization-summary} summarizes the optimal
criteria in the hyperbolic regime and the obstruction in the
non-hyperbolic regime, while
Table~\ref{tab:one-dimensional-linearization-summary} records the
corresponding one-dimensional results. For the real diagonalizable
contracting case, we recall that
\[
\alpha_0:=1-\frac{\log|\lambda_2|}{\log|\lambda_1|},
\quad 0<|\lambda_1|\leqslant|\lambda_2|<1.
\]
When $|\lambda_1|=|\lambda_2|$, we have $\alpha_0=0$, and the
polynomially weighted Dini condition~\eqref{kjh} reduces to the
Dini condition~\eqref{dinicd}. Only the contracting cases are listed
for attracting and repelling fixed points; the corresponding repelling
cases follow by passing to the inverse mapping.

\begin{table}[htbp]
	\centering
	\small
	\begin{tabularx}{\linewidth}{|
			>{\centering\arraybackslash}m{0.19\linewidth}|
			>{\centering\arraybackslash}m{0.33\linewidth}|
			>{\centering\arraybackslash}X|}
		\hline
		\PlanarSummaryCell{\textbf{Result}}
		& \PlanarSummaryCell{\textbf{Fixed-point type and case}}
		& \PlanarSummaryCell{\textbf{Optimal criterion or obstruction}} \\
		\hline
		\PlanarSummaryCell{Theorem~\ref{TH2}}
		& \PlanarSummaryCell{
			Case~\ref{case-P-attractor}\par
			Hyperbolic real diagonalizable attractors\par
			$0<|\lambda_1|\leqslant|\lambda_2|<1$}
		& \PlanarSummaryCell{
			Polynomially weighted Dini condition\par\smallskip
			$\displaystyle
			\int_0^1\frac{\omega(t)}{t^{1+\alpha_0}}dt<+\infty$
			\quad\eqref{kjh}} \\
		\hline
		\PlanarSummaryCell{Theorem~\ref{thm:focus}}
		& \PlanarSummaryCell{
			Case~\ref{case-P-focus}\par
			Hyperbolic attracting foci}
		& \PlanarSummaryCell{
			Dini condition\par\smallskip
			$\displaystyle
			\int_0^1\frac{\omega(t)}{t}dt<+\infty$
			\quad\eqref{dinicd}} \\
		\hline
		\PlanarSummaryCell{Theorem~\ref{thm:jordan}}
		& \PlanarSummaryCell{
			Case~\ref{case-P-degenerate-node}\par
			Hyperbolic attracting degenerate nodes}
		& \PlanarSummaryCell{
			Squared-logarithmically weighted Dini condition\par\smallskip
			$\displaystyle
			\int_0^1\frac{\omega(t)}{t}
			\left(\log\frac{e}{t}\right)^2dt<+\infty$
			\quad\eqref{eq:log2dini}} \\
		\hline
		\PlanarSummaryCell{Theorem~\ref{TH4}}
		& \PlanarSummaryCell{
			Case~\ref{case-S-saddle}\par
			Hyperbolic saddle points}
		& \PlanarSummaryCell{
			Dini condition\par\smallskip
			$\displaystyle
			\int_0^1\frac{\omega(t)}{t}dt<+\infty$
			\quad\eqref{dinicd}} \\
		\hline
		\PlanarSummaryCell{Theorem~\ref{THNH}}
		& \PlanarSummaryCell{
			Cases~\ref{case-S-mixed}, \ref{case-S-real-unit},
			\ref{case-S-elliptic}, and~\ref{case-S-unit-jordan}\par
			Non-hyperbolic fixed points}
		& \PlanarSummaryCell{
			Regularity alone does not guarantee local $C^0$ linearizability.} \\
		\hline
	\end{tabularx}
	\caption{Optimal criteria and obstructions for planar $C^1$ linearization.}
	\label{tab:planar-linearization-summary}
\end{table}

\begin{table}[htbp]
	\centering
	\small
	\begin{tabularx}{\linewidth}{|
			>{\centering\arraybackslash}m{0.19\linewidth}|
			>{\centering\arraybackslash}m{0.33\linewidth}|
			>{\centering\arraybackslash}X|}
		\hline
		\PlanarSummaryCell{\textbf{Result}}
		& \PlanarSummaryCell{\textbf{Fixed-point type}}
		& \PlanarSummaryCell{\textbf{Optimal criterion or obstruction}} \\
		\hline
		\PlanarSummaryCell{Theorem~\ref{TH5}}
		& \PlanarSummaryCell{
			Hyperbolic attracting fixed points\par
			$0<|\lambda|<1$}
		& \PlanarSummaryCell{
			Dini condition\par\smallskip
			$\displaystyle
			\int_0^1\frac{\omega(t)}{t}dt<+\infty$
			\quad\eqref{dinicd}} \\
		\hline
		\PlanarSummaryCell{Remark~\ref{RE13}}
		& \PlanarSummaryCell{
			Non-hyperbolic fixed points\par
			$|\lambda|=1$}
		& \PlanarSummaryCell{
			Regularity alone does not guarantee local $C^0$ linearizability.} \\
		\hline
	\end{tabularx}
	\caption{Optimal criterion and obstruction for one-dimensional $C^1$
		linearization.}
	\label{tab:one-dimensional-linearization-summary}
\end{table}

All the hyperbolic criteria in
Table~\ref{tab:planar-linearization-summary} are \textit{optimal} for every
prescribed choice of eigenvalues and the corresponding real Jordan
form. In each case, the displayed condition guarantees local $C^1$
linearizability for every diffeomorphism in the class $C_{1,\omega}$;
whenever it fails, a counterexample exists with that same linear part.
For every prescribed non-hyperbolic linear part,
Theorem~\ref{THNH} provides a global real-analytic diffeomorphism with
polynomial components of degree at most three that fails to admit a
local $C^0$ linearization. Together with the optimal one-dimensional
Dini criterion in Theorem~\ref{TH5} (Table~\ref{tab:one-dimensional-linearization-summary}), these results determine the exact
scope of regularity assumptions alone in guaranteeing $C^1$
linearization in dimensions one and two.

Under the corresponding integrability conditions, we also obtain \textit{optimal}
regularity estimates for the derivatives of the linearizing conjugacies in
every hyperbolic case.
For anisotropic real diagonalizable contractions, the output modulus of continuity in~\eqref{eq:anisotropic-optimal-modulus} reads
\[	\omega_{\mathrm{A}}(t):=
\begin{cases}
	\displaystyle t^{\alpha_1}\int_0^t\frac{\omega(s)}{s^{1+\alpha_1}}ds,
	&  I_{\alpha_1}(\omega)<+\infty,\\[3mm]
	\displaystyle\int_0^{t^{\frac{1}{\alpha_0}}}\frac{\omega(s)}{s^{1+\alpha_0}}ds
	+t^{\alpha_1}\int_{t^{\frac{1}{\alpha_0}}}^1\frac{\omega(s)}{s^{1+\alpha_1}}ds,
	&  I_{\alpha_1}(\omega)=+\infty,
\end{cases}\]
 with the branch
determined by $I_{\alpha_1}(\omega):=\int_0^1\frac{\omega(s)}{s^{1+\alpha_1}}ds$ and
$\alpha_1=\alpha_0/(1-\alpha_0)$.
The other output moduli of continuity, from~\eqref{omegaA},
\eqref{eq:jordan-regularity-modulus}, and~\eqref{eq:saddle-regularity-modulus}, are
\[
\omega_{\rm B}(t)=\int_0^t\frac{\omega(s)}s ds,
\quad
\omega_{\rm C}(t)=\int_0^t\frac{\omega(s)}s
\left(\log\frac{et}s\right)^2ds,
\]
and
\[
\omega_{\rm D}(t)=\omega_{\rm B}(t^\delta)
=\int_0^{t^\delta}\frac{\omega(s)}s ds,\quad \delta:=\frac{\min\{-\log|\lambda_1|,\log|\lambda_2|\}}
{\log|\lambda_2|-\log|\lambda_1|},
\quad 0<|\lambda_1|<1<|\lambda_2|.
\]
Each listed modulus of continuity is optimal for the class with the prescribed linear part; see Table~\ref{tab:linearization-sharp-regularity}.

\begin{table}[htbp]
    \centering
    \small
    \setlength{\tabcolsep}{4pt}
    \setlength{\extrarowheight}{0pt}
    \renewcommand{\arraystretch}{1.4}
    \renewcommand{\tabularxcolumn}[1]{m{#1}}
    \begin{tabularx}{\linewidth}{|
        >{\centering\arraybackslash}m{0.18\linewidth}|
        >{\centering\arraybackslash}m{0.43\linewidth}|
        >{\centering\arraybackslash}X|}
        \hline
        \textbf{Result}
        & \textbf{Fixed-point type and case}
        & \textbf{Optimal regularity of the linearizing diffeomorphism} \\
        \hline
        Theorem~\ref{TH3}--\ref{TH3-1}
        & Case~\ref{case-P-attractor}\par
          Hyperbolic real diagonalizable attractors\par
          $0<|\lambda_1|<|\lambda_2|<1$
        & $C_{1,\omega_{\rm A}}$ \\
        \hline
        Theorem~\ref{TH3}--\ref{TH3-2}
        & Case~\ref{case-P-attractor}\par
          Hyperbolic real diagonalizable attractors\par
          $0<|\lambda_1|=|\lambda_2|<1$
        & $C_{1,\omega_{\rm B}}$ \\
        \hline
        Theorem~\ref{thm:focus}
        & Case~\ref{case-P-focus}\par Hyperbolic attracting foci
        & $C_{1,\omega_{\rm B}}$ \\
        \hline
        Theorem~\ref{thm:jordan}
        & Case~\ref{case-P-degenerate-node}\par
          Hyperbolic attracting degenerate nodes
        & $C_{1,\omega_{\rm C}}$ \\
        \hline
        Theorem~\ref{TH4}
        & Case~\ref{case-S-saddle}\par Hyperbolic saddle points
        & $C_{1,\omega_{\rm D}}$ \\
        \hline
        Theorem~\ref{TH5}
        & One-dimensional hyperbolic attracting fixed points\par
          $0<|\lambda|<1$
        & $C_{1,\omega_{\rm B}}$ \\
        \hline
    \end{tabularx}
    \caption{Optimal regularity of linearizing diffeomorphisms.}
    \label{tab:linearization-sharp-regularity}
\end{table}

From an applied viewpoint, the optimality of the planar $C^1$
linearizability criteria allows one to relax regularity hypotheses in
applications with the corresponding spectral configuration, while the
accompanying counterexamples delineate the threshold beyond which
regularity alone no longer guarantees linearization. On a
methodological level, the analysis developed herein suggests an
approach to optimal conditions formulated in terms of moduli of
continuity for higher-order linearizations and for dynamical systems
in higher or even infinite dimensions. We leave these questions for future
investigation.

\endgroup

\subsection{Why General Moduli of Continuity Matter}
\label{sec:beyond-holder-comparison}

The optimal $C^1$ linearizability criteria summarized in Section~\ref{seccon} reveal fine distinctions that remain \textit{invisible} on the classical H\"older scale. Write $L(t)=\log(e/t)$ and $M(t)=\log(eL(t))$. Beyond the standard H\"older moduli $t^\alpha$, natural examples of moduli of continuity  include the log-Lipschitz modulus $tL(t)$, the log-H\"older moduli $L(t)^{-p}$, their iterated-logarithmic refinements $L(t)^{-p}M(t)^{-q}$, and the stretched-logarithmic family $e^{-L(t)^\gamma}$ for $0<\gamma<1$. These expressions are considered for sufficiently small $t>0$, where they are strictly increasing and concave across the parameter regimes specified below, and are extended to $[0,\infty)$. A smaller modulus imposes a strictly stronger regularity requirement. For every $\varepsilon,p,q>0$ and $0<\gamma<1$, it can be verified that
\[
t^\varepsilon=o\bigl(e^{-L(t)^\gamma}\bigr),\quad
e^{-L(t)^\gamma}=o\bigl(L(t)^{-p}\bigr),\quad
L(t)^{-p}=o\bigl(M(t)^{-q}\bigr)\quad\text{as }t\to0^+.
\]
Consequently, general moduli resolve an entire hierarchy of sub-H\"older regularity lying strictly between $C^1$ and $\bigcup_{\varepsilon>0}C^{1,\varepsilon}$; see Figure~\ref{fig:beyond-holder-compact}(a). In particular, a log-H\"older modulus does not imply H\"older continuity. A comprehensive comparison between these moduli of continuity and the respective Dini and weighted Dini conditions is summarized in Table~\ref{tab:beyond-holder-compact}.

\begin{table}[htbp]
	\centering
	\begingroup
	\small
	\setlength{\tabcolsep}{4pt}
	\renewcommand{\arraystretch}{1.3}
	\renewcommand{\tabularxcolumn}[1]{m{#1}}
	\begin{tabularx}{\linewidth}{|>{\centering\arraybackslash}m{0.36\linewidth}|*{3}{>{\centering\arraybackslash}X|}}
		\hline
		Modulus $\omega(t)$ near zero
		& Dini condition \par \eqref{dinicd}
		& Weighted Dini condition \par  \eqref{eq:log2dini}
		& Weighted Dini condition \par \eqref{kjh} \\
		\hline
		$t^\alpha$, $0<\alpha\leqslant1$ & Always & Always & $\alpha>\alpha_0$ \\
		\hline
		$tL(t)$ & Always & Always & Always \\
		\hline
		$t^{\alpha_0}L(t)^{-p}$, $p\in\mathbb R$ & Always & Always & $p>1$ \\
		\hline
		$L(t)^{-p}$, $p>0$ & $p>1$ & $p>3$ & Never \\
		\hline
		$L(t)^{-1}M(t)^{-q}$, $q\in\mathbb R$ & $q>1$ & Never & Never \\
		\hline
		$L(t)^{-3}M(t)^{-q}$, $q\in\mathbb R$ & Always & $q>1$ & Never \\
		\hline
		$M(t)^{-q}$, $q>0$ & Never & Never & Never \\
		\hline
		$e^{-L(t)^\gamma}$, $0<\gamma<1$ & Always & Always & Never \\
		\hline
	\end{tabularx}
	\endgroup
	\caption{Exact convergence conditions for the three linearizability criteria, with $0<\alpha_0<1$ fixed. ``Always'' and ``Never'' refer to the stated parameter ranges.}
	\label{tab:beyond-holder-compact}
\end{table}

For the input modulus $\omega=L^{-p}$ with $p>3$, the summarized notations in Section~\ref{seccon} yield
\[
\omega_{\mathrm{B}}(t)\asymp L(t)^{1-p},\quad
\omega_{\mathrm{C}}(t)\asymp L(t)^{3-p},\quad
\omega_{\mathrm{D}}(t)\asymp\omega_{\mathrm{B}}(t).
\]
The contracting Jordan configuration therefore incurs a loss of three inverse-logarithmic powers, in sharp contrast to the single-power loss in the focus or isotropic cases; see Figure~\ref{fig:beyond-holder-compact}(b). For standard H\"older moduli $\omega(t)=t^\alpha$, by contrast, both $\omega_{\mathrm{B}}$ and $\omega_{\mathrm{C}}$ remain equivalent to $t^\alpha$, entirely masking this disparity. General moduli thus distinguish both the sharp thresholds and the optimal conjugacy regularity losses that lie beyond the reach of a single H\"older exponent.

\begin{figure}[htbp]
	\centering
	\includegraphics[width=\linewidth]{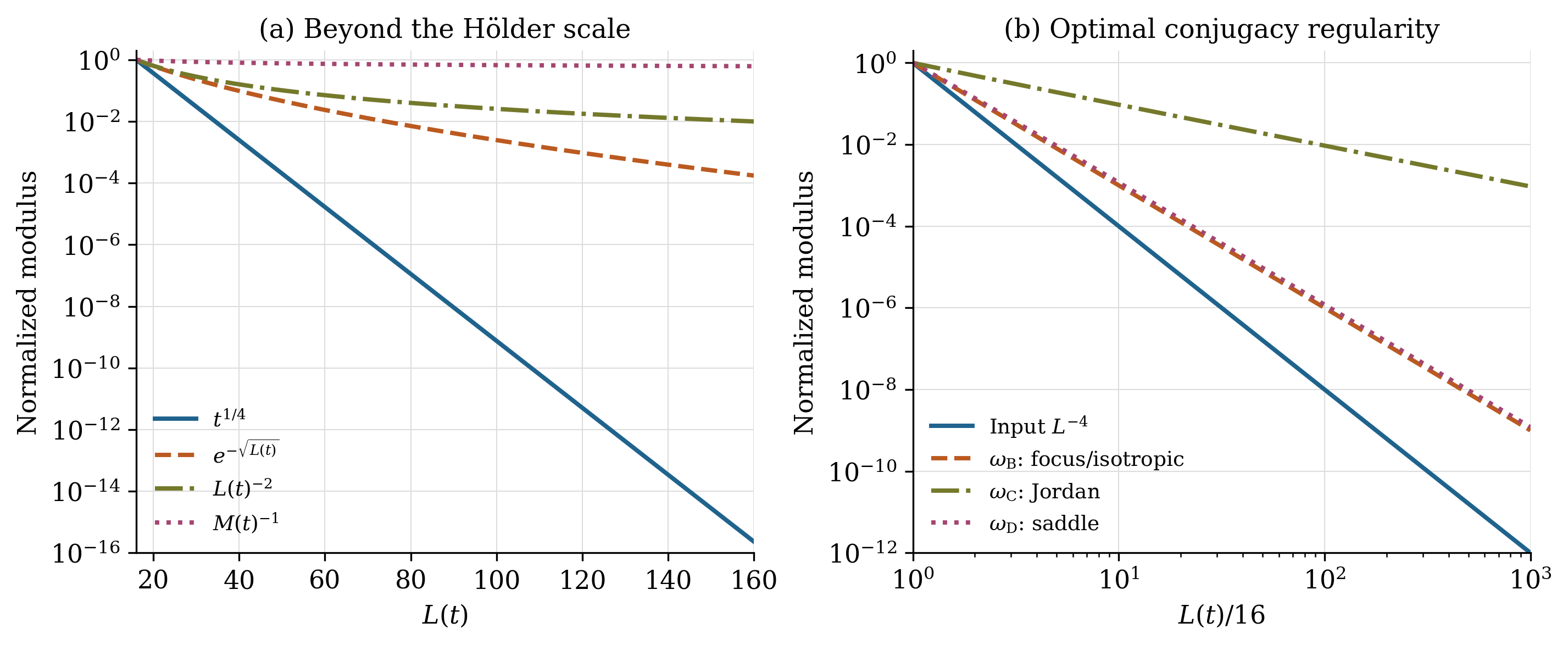}
	\caption{(a)~Representative H\"older and non-H\"older moduli. (b)~Input modulus $\omega=L^{-4}$ alongside the optimal output moduli $\omega_{\mathrm{B}}$ (focus/isotropic), $\omega_{\mathrm{C}}$ (Jordan), and $\omega_{\mathrm{D}}$ (saddle, with $\delta=1/2$). Each curve is normalized by its value at $t_*=e^{-15}$, so that $L(t_*)=16$. Moving rightward corresponds to $t\to0^+$; lower curves decay more rapidly. Panel~(a) features a logarithmic vertical scale; panel~(b) employs double logarithmic scales. Normalization allows for comparison of decay rates rather than absolute constants.}
	\label{fig:beyond-holder-compact}
\end{figure}

\section{Proofs of Main Results}
The remainder of the paper is devoted to the proofs of Theorems~\ref{TH1}, \ref{TH3}, \ref{thm:focus}, \ref{thm:jordan}, \ref{TH4}, and~\ref{THNH}. In the proofs, positive estimate constants $C_0,C_1,\ldots$ are numbered in order of first occurrence. They may depend on fixed data; independence of varying parameters is specified where needed.

	\subsection{Proof of Theorem \ref{TH1}}\label{SEC3}
	\begin{proof}

		Since the isotropic case $0<|\lambda_1| = |\lambda_2| <1$ was treated in \cite[Example 3.1]{TXL24}, throughout this section we first assume that $0 < \lambda_1 < \lambda_2 < 1$ and that the weighted Dini  condition \eqref{kjh} fails (i.e., $\int_0^1 \frac{\omega(t)}{t^{\alpha_0+1}} dt = +\infty$). We focus first on this case of same-signed eigenvalues because it contains the essential mechanism; the case of opposite signs will then require only minor modifications.
		
		Choose a symmetric bump function $\chi \in C_c^\infty(\mathbb{R})$ satisfying $0 \leqslant \chi(t) \leqslant 1$ for all $t \in \mathbb{R}$, with
		\[
		\chi(t) = \begin{cases}
			1, & |t| \leqslant 1, \\
			0, & |t| \geqslant 2,
		\end{cases}
		\]
		and define the auxiliary smooth function $K(t) := t \chi(t) \in C_c^\infty(\mathbb{R})$. Figure~\ref{fig:chi-bump} illustrates the profile of $\chi$.

		\par\medskip
		\noindent
		\begin{minipage}[t]{0.48\textwidth}
			\vspace{0pt}
			\centering
			\includegraphics[width=\linewidth]{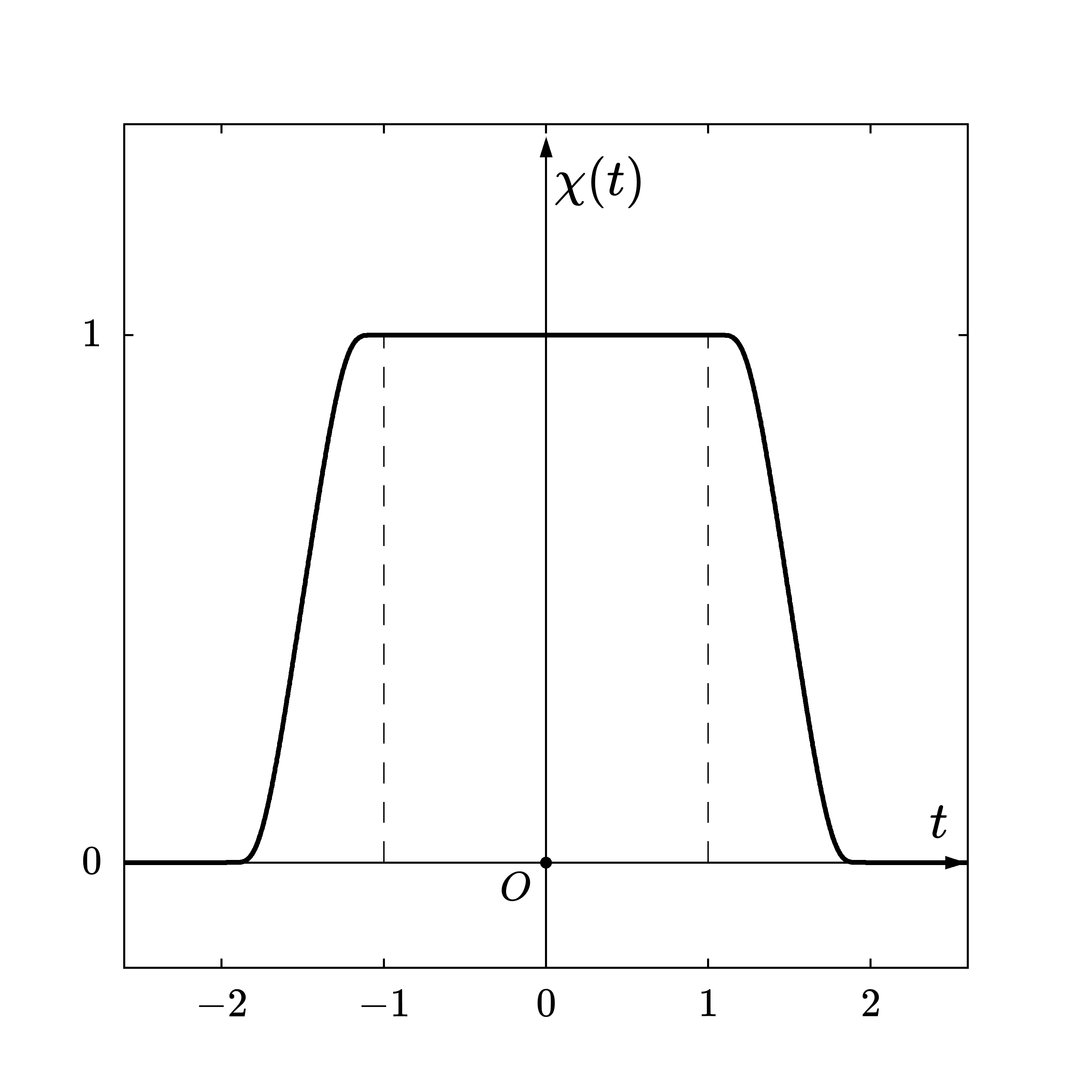}
			\captionsetup{
				font=small,
				justification=centering,
				singlelinecheck=false,
				hypcap=false
			}
			\captionof{figure}{A symmetric bump function $\chi$.}
			\label{fig:chi-bump}
		\end{minipage}\hfill%
		\begin{minipage}[t]{0.48\textwidth}
			\vspace{0pt}
			\centering
			\includegraphics[width=\linewidth]{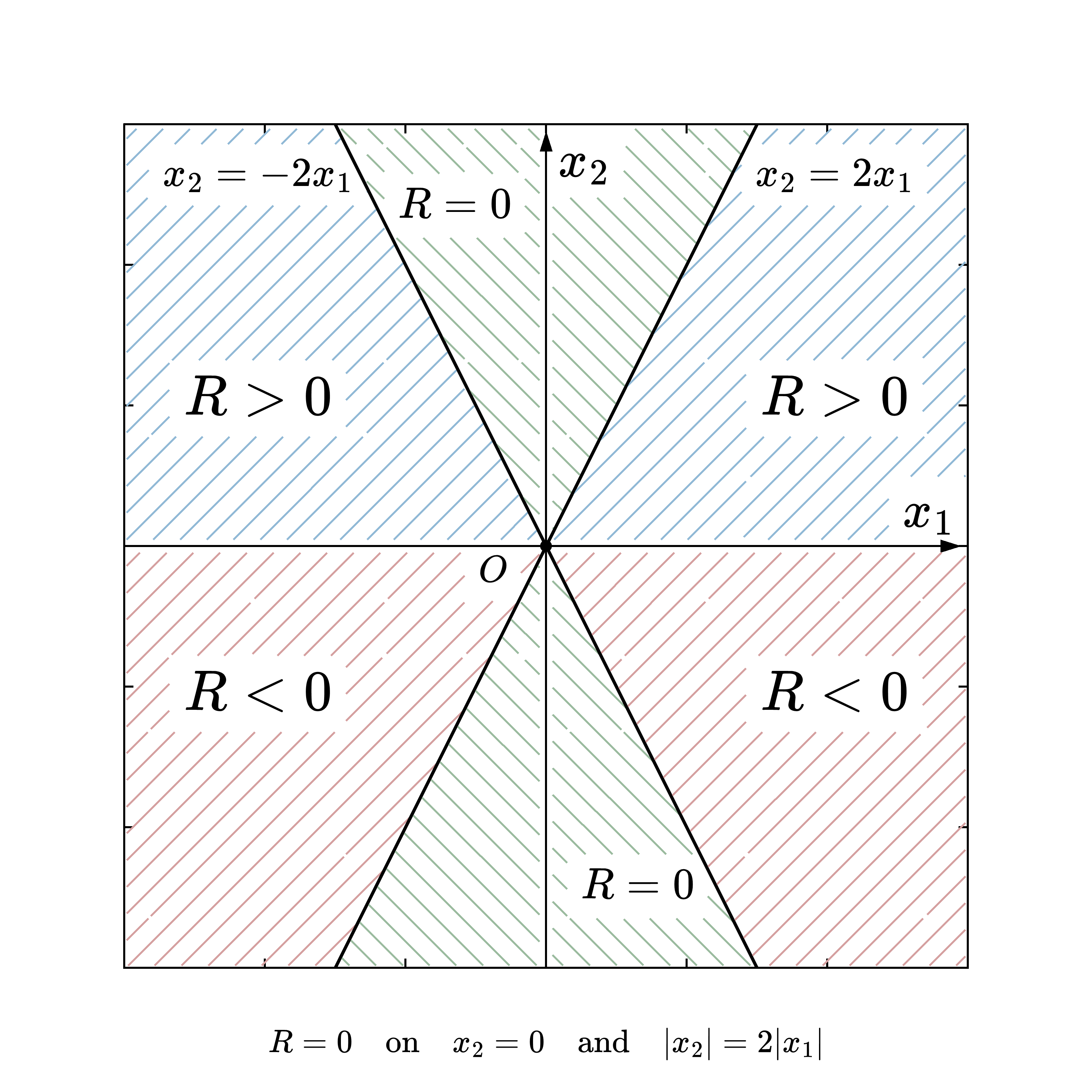}
			\captionsetup{
				font=small,
				justification=centering,
				singlelinecheck=false,
				hypcap=false
			}
			\captionof{figure}{Sign regions of the perturbation $R$.}
			\label{fig:R-sign-regions}
		\end{minipage}
		\par\medskip

		For the given concave modulus of continuity $\omega$, we introduce its primitive $\Omega$ and the normalized averaged modulus of continuity $\widehat{\omega}$ by
		\[
		\Omega(t) := \int_0^t \omega(s) ds, \quad \widehat{\omega}(t) := \frac{\Omega(t)}{t} = \frac{1}{t}\int_0^t \omega(s)ds \quad (t > 0), \quad \widehat{\omega}(0) := 0.
		\]
		It should be pointed out that such a normalized average $\widehat{\omega}$ heuristically resembles $\omega$ in terms of order of magnitude, yet enhances its regularity, which plays a pivotal role in our subsequent construction of concrete counterexamples.
		
		\begin{lemma}\label{prop21}
			The normalized averaged modulus of continuity $\widehat{\omega}$ possesses the following properties:
			\begin{enumerate}[label=(\arabic*), ref=(\arabic*)]

				\item \label{p1} For all $t > 0$, $\frac{1}{2}\omega(t) \leqslant \widehat{\omega}(t) \leqslant \omega(t)$;
				\item \label{p3} $\widehat{\omega} \in C^1(0, +\infty)$ with $0 \leqslant \widehat{\omega}'(t) = \frac{\omega(t) - \widehat{\omega}(t)}{t} \leqslant \frac{\omega(t)}{t}$ for all $t > 0$, and for any $t_1, t_2 > 0$,
				\begin{equation*}
					|\widehat{\omega}(t_1) - \widehat{\omega}(t_2)| \leqslant \omega(|t_1 - t_2|).
				\end{equation*}
			\end{enumerate}
		\end{lemma}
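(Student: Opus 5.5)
The plan is to derive every property from two facts about $\omega$: it is nondecreasing, and it is concave with $\omega(0)=0$, so that $\omega(s)\geqslant (s/t)\,\omega(t)$ for $0\leqslant s\leqslant t$.

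\emph{Item~\ref{p1}.} Since $\omega$ is nondecreasing, $\Omega(t)=\int_0^t\omega(s)\,ds\leqslant t\,\omega(t)$, which gives $\widehat\omega(t)\leqslant\omega(t)$. For the lower bound, concavity gives
\[
\Omega(t)\geqslant \int_0^t \frac{s}{t}\,\omega(t)\,ds=\frac{t}{2}\,\omega(t),
\]
hence $\widehat\omega(t)\geqslant \tfrac12\omega(t)$.

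\emph{Derivative formula in Item~\ref{p3}.} Because $\omega$ is continuous, $\Omega\in C^1$ with $\Omega'=\omega$. The quotient rule then yields
\[
\widehat\omega'(t)=\frac{\omega(t)}{t}-\frac{\Omega(t)}{t^2}=\frac{\omega(t)-\widehat\omega(t)}{t},
\]
which is continuous on $(0,\infty)$. Item~\ref{p1} immediately gives $0\leqslant \widehat\omega'(t)\leqslant \omega(t)/t$.

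\emph{Modulus estimate in Item~\ref{p3}.} This is the only step needing an idea. Substituting $s=t\tau$ gives the representation
\[
\widehat\omega(t)=\int_0^1\omega(t\tau)\,d\tau .
\]
For $t_1,t_2>0$ we then have
\[
|\widehat\omega(t_1)-\widehat\omega(t_2)|\leqslant\int_0^1|\omega(t_1\tau)-\omega(t_2\tau)|\,d\tau .
\]
A concave $\omega$ with $\omega(0)=0$ is subadditive, and since it is also increasing, $|\omega(a)-\omega(b)|\leqslant\omega(|a-b|)$. Therefore
\[
|\omega(t_1\tau)-\omega(t_2\tau)|\leqslant\omega(\tau|t_1-t_2|)\leqslant\omega(|t_1-t_2|),
\]
and integrating over $\tau\in[0,1]$ closes the argument. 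If one only needs the bound for $t>0$, the convention $\widehat\omega(0)=0$ plays no role. It is nonetheless consistent, since $\widehat\omega(t)\leqslant\omega(t)\to0$ as $t\to0^+$, so the same estimate also holds when one argument equals $0$.
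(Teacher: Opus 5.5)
Your proposal is correct and follows essentially the same route as the paper: for Item~(1) you integrate the concavity and monotonicity bounds $\frac{s}{t}\omega(t)\leqslant\omega(s)\leqslant\omega(t)$, and for the modulus estimate you use the representation $\widehat{\omega}(t)=\int_0^1\omega(t\tau)\,d\tau$ together with subadditivity. The only cosmetic difference is in the last step: the paper integrates first to get the intermediate bound $\widehat{\omega}(|t_1-t_2|)$ and then applies Item~(1), whereas you use monotonicity pointwise before integrating; you also verify explicitly the derivative formula that the paper calls evident.
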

		\begin{proof}
			By the monotonicity and the concavity of $\omega$ on $[0, t]$ and the fact that $\omega(0)=0$, we have $\frac{s}{t}\omega(t) \leqslant \omega(s) \leqslant \omega(t)$. Integrating over $s \in [0, t]$ yields $\frac{1}{2}t\omega(t) \leqslant \Omega(t) \leqslant t\omega(t)$, which immediately gives Item \ref{p1} upon dividing by $t$.
			
			For Item \ref{p3}, the first property is evident. For any $t_1, t_2 > 0$, using the change of variables $s = ut$, we write $\widehat{\omega}(t) = \int_0^1 \omega(ut)du$. Applying the subadditivity $|\omega(a) - \omega(b)| \leqslant \omega(|a-b|)$ for all $a,b\geqslant 0$\footnote{The subadditivity of $\omega$ is a direct consequence of its concavity and the condition $\omega(0) = 0$.} together with Item \ref{p1} gives
			\begin{equation*}
				|\widehat{\omega}(t_1) - \widehat{\omega}(t_2)| \leqslant \int_0^1 |\omega(ut_1) - \omega(ut_2)|du \leqslant \int_0^1 \omega(u|t_1 - t_2|)du = \widehat{\omega}(|t_1 - t_2|) \leqslant \omega(|t_1 - t_2|),
			\end{equation*}
			which completes the proof.
		\end{proof}
		
		We now define the desired counterexample $F: \mathbb{R}^2 \to \mathbb{R}^2$ by
		\begin{equation}\label{eq:3.2}
			F(x_1, x_2) := \begin{pmatrix} \lambda_1 x_1 + R(x_1, x_2) \\ \lambda_2 x_2 \end{pmatrix},
		\end{equation}
		where the nonlinear perturbation $R: \mathbb{R}^2 \to \mathbb{R}$ is given by
		\begin{equation}\label{eq:3.3}
			R(x_1, x_2) := \begin{cases}
				\Omega(|x_1|)\operatorname{sgn}(x_1) K\left(\dfrac{x_2}{x_1}\right), & x_1 \neq 0, \\
				0, & x_1 = 0.
			\end{cases}
		\end{equation}
		Figure~\ref{fig:R-sign-regions} illustrates the sign regions of the perturbation $R$.
		Here, the bump function is introduced primarily to ensure the regularity of $ F $ (see Lemma \ref{pro:regularity} below); it plays no essential role, however, in establishing the failure of $ C^1 $ linearization.
		
		\begin{lemma}\label{pro:regularity}
			The mapping $F$ defined by  \eqref{eq:3.2}--\eqref{eq:3.3} satisfies $F(O) = O$, $DF(O) = \Lambda$, and belongs to the regularity class $C_{1,\omega}(U)$ on any bounded neighborhood $U$ of $ O $. Moreover, after restricting $F$ to a sufficiently small neighborhood of $O$, it is a local $C^1$ diffeomorphism and a contraction.
		\end{lemma}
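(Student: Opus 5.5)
We must show that $R$ is $C^1$ with $DR(O)=0$ and $|DR(x)-DR(y)|\leqslant C\omega(|x-y|)$ on bounded sets. The remaining assertions then follow quickly. $F(O)=O$ is immediate. $DF(O)=\Lambda+DR(O)$, so $DF(O)=\Lambda$ once $DR(O)=0$. The local diffeomorphism and contraction properties follow from the inverse function theorem and from $\|DF\|\leqslant\max\{\lambda_1,\lambda_2\}+o(1)<1$ near $O$ in a suitable norm.

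The plan rests on the homogeneity structure of $R$. For $x_1\neq0$ write $u=x_2/x_1$, and note that $K$ is supported in $|u|\leqslant2$. So $R$ vanishes outside the cone $\{|x_2|\leqslant2|x_1|\}$, and inside it $|x|\asymp|x_1|$. Differentiating with $\Omega'=\omega$ gives
\[
\partial_{x_1}R=\omega(|x_1|)K(u)-\widehat{\omega}(|x_1|)\,uK'(u),\qquad \partial_{x_2}R=\widehat{\omega}(|x_1|)K'(u).
\]
By Lemma~\ref{prop21}\ref{p1} and the boundedness of $K,uK'$, both partials are $\mathcal O(\omega(|x|))$. Also $|R|\leqslant\Omega(|x_1|)\cdot2\leqslant2|x_1|\omega(|x_1|)$. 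Hence $R$ is differentiable at $O$ with $DR(O)=0$. The partials are continuous on $\{x_1\neq0\}$, and they tend to $0$ as $x_1\to0$, since on the support $|x|\to0$ there. So $R\in C^1$.

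For the modulus estimate, fix $x,y$. First, if $|x-y|\geqslant c\max\{|x|,|y|\}$ for a small constant $c$, then $|DR(x)-DR(y)|\leqslant C(\omega(|x|)+\omega(|y|))\leqslant C'\omega(|x-y|)$, using monotonicity and $\omega(t/c)\leqslant c^{-1}\omega(t)$ (concavity). Second, if $|x-y|<c\max\{|x|,|y|\}$, we may assume at least one of the points lies in the slightly enlarged cone $|u|\leqslant3$, since otherwise both derivatives vanish. Then both points satisfy $|x_1|\asymp|y_1|\asymp|x|$ with the same sign, and the segment between them stays in a region $|u|\leqslant4$, $x_1\neq0$. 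On this segment we estimate the differences term by term. The terms $|\omega(|x_1|)-\omega(|y_1|)|\leqslant\omega(|x-y|)$ and $|\widehat{\omega}(|x_1|)-\widehat{\omega}(|y_1|)|\leqslant\omega(|x-y|)$ follow from subadditivity and Lemma~\ref{prop21}\ref{p3}. The remaining terms are of the form $\omega(|x_1|)\,|G(u_x)-G(u_y)|$, with $G\in\{K,uK',K'\}$ smooth. For these we use $|u_x-u_y|\leqslant C|x-y|/|x_1|$, which gives the bound $C\omega(|x_1|)\,|x-y|/|x_1|$. Since $\omega(t)/t$ is nonincreasing and $|x-y|\leqslant c|x_1|$, this is at most $C\omega(|x-y|)$.

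The main obstacle is this last near-diagonal cross term. It works only because concavity yields $s\,\omega(t)/t\leqslant\omega(s)$ for $s\leqslant t$. One must also handle the sign of $x_1$ carefully: the $\operatorname{sgn}(x_1)$ factor and the absolute values mean that points with opposite signs of $x_1$ must be excluded from the near case. This exclusion holds because the cone condition forces $|x_1-y_1|\gtrsim|x|$ when the signs differ. Combining both cases gives $F\in C_{1,\omega}(U)$ with $L$ depending on $U$ only through the boundedness needed to extend $\omega$.
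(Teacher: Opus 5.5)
Your proposal is correct and follows essentially the paper's argument. It uses the same explicit formulas for $\partial_1R$ and $\partial_2R$, and the same pointwise bound $|DR(x)|\leqslant C\omega(|x|)$ for pairs that are far apart relative to their size. For nearby pairs it likewise relies on smoothness in $u=x_2/x_1$ together with the monotonicity of $\omega(t)/t$.

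The difference is only organizational. Your near/far split relative to $\max\{|x|,|y|\}$ places nearby pairs in a single enlarged half-cone, where the derivative formulas can be differenced directly. This replaces the paper's separate treatment of mixed pairs, which routes through a point $z$ on the segment $[x,y]$ (on $\Sigma$ or on the $x_2$-axis) where $DR(z)=0$.
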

		
		\begin{proof}
			We divide $\mathbb{R}^2$ into the active conical sector $\mathcal C_{\text{in}}:= \{(x_1, x_2) \in \mathbb{R}^2 : |x_2| \leqslant 2|x_1|\}$ and the outer vanishing region $\mathcal C_{\text{out}}:= \{(x_1, x_2) \in \mathbb{R}^2 : |x_2| > 2|x_1|\}$, separated by the boundary interface $\Sigma := \{(x_1, x_2) \in \mathbb{R}^2 : |x_2| = 2|x_1|\}$. 
			
			We first verify the differentiability of $R$ on $\mathbb{R}^2 \setminus \{O\}$. Since $\operatorname{supp}(K) \subset [-2, 2]$, $R \equiv 0$ identically on the open set $\mathcal{C}_{\text{out}}$, whence $DR(x) = (0, 0)$ for all $x \in \mathcal{C}_{\text{out}}$. On the open set $\mathbb{R}^2 \setminus \{x_1 = 0\}$, set $\theta = x_2/x_1$ and $G(t) := t K'(t) \in C_c^\infty(\mathbb{R})$. Since $ R $ is odd, i.e.,  
			$R(-x)=-R(x)$, its partial derivatives are even. Direct differentiation gives, for all $x_1 \neq 0$,
			\begin{equation}\label{eq:4.1}
				\partial_1 R(x) = \omega(|x_1|) K(\theta) - \widehat{\omega}(|x_1|) G(\theta), \quad \partial_2 R(x) = \widehat{\omega}(|x_1|) K'(\theta).
			\end{equation}
			Since $K, K'$, and $G$ vanish identically for $|\theta| \geqslant 2$, \eqref{eq:4.1} continuously evaluates to $(0, 0)$ on $\Sigma \setminus \{O\}$. Therefore, $R$ is everywhere differentiable on $\mathbb{R}^2 \setminus \{O\}$ with $DR$ continuous on $\mathbb{R}^2 \setminus \{O\}$.
			
			Next, we prove that \(R\) is Fr\'echet differentiable at \(O\) with
			\(DR(O)=0\). If \(x\in\mathcal C_{\mathrm{out}}\), then \(R(x)=0\).
			If \(x\in\mathcal C_{\mathrm{in}}\), then
			\[
			|R(x)|
			\leqslant \|K\|_\infty \Omega(|x_1|)
			\leqslant \|K\|_\infty |x_1|\omega(|x_1|)
			\leqslant \|K\|_\infty  |x |\omega( |x |).
			\]
			Here, $\|\cdot\|_\infty$ denotes the standard supremum norm with respect to the variable.
			Consequently, the same estimate holds for every \(x\in\mathbb R^2\). Since
			\(R(O)=0\) and \(\omega(0)=0\), we have
			\[
			0\leqslant
			\frac{|R(x)-R(O)|}{ |x |}
			\leqslant
			\|K\|_\infty\omega( |x |)
			\to 0
			\quad\text{as }x\to O.
			\]
			Thus \(R\) is Fr\'echet differentiable at \(O\), with \(DR(O)=0\).
			It follows from \eqref{eq:3.2} that $ F(O)=O $ and $ DF(O)=\Lambda $.
			
			Moreover, if \(x_1\neq0\), then \eqref{eq:4.1} and
			Lemma \ref{prop21} give
			\[
			|DR(x) |
			\leqslant
			\bigl(
			\|K\|_\infty+\|G\|_\infty+\|K'\|_\infty
			\bigr)\omega(|x_1|)
			\leqslant {C_{0}}\omega( |x |),
			\]
			where $ {C_{0}}:=\|K\|_\infty+\|G\|_\infty+\|K'\|_\infty>0$.
			If \(x_1=0\) and \(x\neq O\), then \(R\) vanishes in a neighborhood of
			\(x\), and hence \(DR(x)=0\). Therefore,
			\begin{equation}\label{fs}
				|DR(x)-DR(O) |
				= |DR(x) |
				\leqslant {C_{0}}\omega( |x |)
				\to 0
				\quad\text{as }x\to O.
			\end{equation}
			Thus \(DR\) is continuous at \(O\). Together with the continuity of \(DR\)
			on \(\mathbb R^2\setminus\{O\}\) established above, this proves that
			\(R\in C^1(\mathbb R^2)\), and hence \(F\in C^1(U)\).

			It remains to show that $DR$ has modulus $\omega$.
			
			By symmetry, we first consider points $x, y \in \mathcal{C}_{\text{in}}^+ := \{x_1 > 0, |x_2| \leqslant 2x_1\}$. Let $h :=  |x - y |$, $\theta_x := x_2/x_1$, and $\theta_y := y_2/y_1 \in [-2, 2]$. 
			
			If $h \geqslant \frac{1}{2}x_1$, then $x_1 \leqslant 2h$ and $y_1 \leqslant x_1 + h \leqslant 3h$. Recalling \eqref{fs}, the monotonicity and subadditivity of $\omega$ yield
			\begin{equation*}
				|DR(x) - DR(y) | \leqslant  |DR(x) | +  |DR(y) | \leqslant {C_{0}} (\omega(2h) + \omega(3h)) \leqslant 5{C_{0}} \omega(h).
			\end{equation*}
			
			We now assume $h < \frac{1}{2}x_1$. Observe that
			\begin{equation}\label{zzsj}
				|\theta_x - \theta_y| = \left|\frac{x_2 - y_2}{x_1} + \frac{y_2(y_1 - x_1)}{x_1 y_1}\right| \leqslant \frac{|x_2 - y_2|}{x_1} + \frac{|y_2|}{y_1}\frac{|x_1 - y_1|}{x_1} \leqslant \frac{h}{x_1} + 2\frac{h}{x_1} = \frac{3h}{x_1},
			\end{equation}
			where we used $|y_2|/y_1 \leqslant 2$ since $y \in \mathcal{C}_{\text{in}}^+$. We now estimate the partial derivatives separately:
			\begin{enumerate}[label=(\roman*),leftmargin=1.5em]
				\item For $\partial_{2} R$, decomposing the difference gives
				\begin{equation*}
					|\partial_2 R(x) - \partial_2 R(y)| \leqslant |\widehat{\omega}(x_1) - \widehat{\omega}(y_1)| \|K'\|_\infty + \widehat{\omega}(x_1) |K'(\theta_x) - K'(\theta_y)|.
				\end{equation*}
				By Lemma \ref{prop21}, $|\widehat{\omega}(x_1) - \widehat{\omega}(y_1)| \leqslant \omega(|x_1 - y_1|) \leqslant \omega(h)$. For the second term, applying the mean value theorem to $K'$ on the interval between $\theta_x$ and $\theta_y$ together with \eqref{zzsj}, there exists some intermediate value $\xi$ such that
				\begin{equation*}
					|K'(\theta_x) - K'(\theta_y)| = |K''(\xi)| |\theta_x - \theta_y| \leqslant \|K''\|_\infty \frac{3h}{x_1}.
				\end{equation*}
				Since $\omega$ is concave with $\omega(0) = 0$, the function $t \mapsto \frac{\omega(t)}{t}$ is non-increasing on $(0, +\infty)$. Recalling $\widehat{\omega}(x_1) \leqslant \omega(x_1)$ from Lemma \ref{prop21} and noting $h < x_1$, we have
				\begin{equation}\label{fddome}
					\frac{\widehat{\omega}(x_1)}{x_1} \leqslant \frac{\omega(x_1)}{x_1} \leqslant \frac{\omega(h)}{h} \implies \widehat{\omega}(x_1)\frac{3h}{x_1} \leqslant 3\omega(h).
				\end{equation}
				Combining these estimates leads to the following bound with ${C_{1}} := \|K'\|_\infty + 3\|K''\|_\infty$:
				\begin{equation*}
					|\partial_2 R(x) - \partial_2 R(y)| \leqslant {C_{1}}\omega(h).
				\end{equation*}
				
				\item For $\partial_{1} R$, we decompose the difference into four terms:
				\begin{align*}
					|\partial_1 R(x) - \partial_1 R(y)| &\leqslant |\omega(x_1) - \omega(y_1)| \|K\|_\infty + \omega(x_1) |K(\theta_x) - K(\theta_y)| \\
					&\quad + |\widehat{\omega}(x_1) - \widehat{\omega}(y_1)| \|G\|_\infty + \widehat{\omega}(x_1) |G(\theta_x) - G(\theta_y)|.
				\end{align*}
				Applying the subadditivity $|\omega(x_1) - \omega(y_1)| \leqslant \omega(h)$, Lemma \ref{prop21}, the mean value theorem to $K$ and $G$, and the ratio bound \eqref{fddome}, we obtain similarly that
				\begin{equation*}
					|\partial_1 R(x) - \partial_1 R(y)| \leqslant {C_{2}}\omega(h),
				\end{equation*}
				where ${C_{2}} := \|K\|_\infty + 3\|K'\|_\infty + \|G\|_\infty + 3\|G'\|_\infty$.
			\end{enumerate}
			Therefore, setting ${C_{3}} := \max\{5{C_{0}}, {C_{1}}+ {C_{2}}\} > 0$, we arrive at
			\begin{equation*}
				|DR(x) - DR(y) | \leqslant {C_{3}} \omega( |x - y |), \quad \forall x, y \in \mathcal{C}_{\text{in}}^+,
			\end{equation*}
			and an identical estimate holds on $\mathcal{C}_{\text{in}}^-$ by symmetry.
			
			Finally, for arbitrary $x, y \in U$:
			\begin{itemize}[leftmargin=1.5em]
				\item If $x, y \in \mathcal{C}_{\text{out}}$, then $DR(x) - DR(y) = (0,0)$.
				\item If $x \in \mathcal{C}_{\text{in}}^+$ and $y \in \mathcal{C}_{\text{out}}$, let $z$ be the first point at which the line segment from $x$ to $y$ meets $\Sigma$. Then $z$ belongs to the closure of $\mathcal{C}_{\text{in}}^+$, $DR(z)=DR(y)=(0,0)$, and $ |x-z |\leqslant |x-y |$. Hence
				\begin{equation*}
					|DR(x) - DR(y) | =  |DR(x) - DR(z) | \leqslant {C_{3}} \omega( |x - z |) \leqslant {C_{3}} \omega( |x - y |).
				\end{equation*}
				\item If $x \in \mathcal{C}_{\text{in}}^+$ and $y \in \mathcal{C}_{\text{in}}^-$, the segment $[x, y]$ crosses the $x_2$-axis at some $z=(0,z_2)^\top$, where $DR(z)=(0,0)$. Applying the preceding mixed-region estimate to $(x,z)$ and $(y,z)$ when $z\neq O$, and using \eqref{fs} directly when $z=O$, we obtain
				\begin{equation*}
					|DR(x) - DR(y) | \leqslant  |DR(x) - DR(z) | +  |DR(z) - DR(y) | \leqslant 2{C_{3}} \omega( |x - y |).
				\end{equation*}
			\end{itemize}
			Setting ${C_{4}} := 2{C_{3}} > 0$, we obtain
			\[ |DF(x) - DF(y) | =  |DR(x) - DR(y) | \leqslant {C_{4}} \omega( |x - y |), \quad \forall x, y \in U,\]
			which proves the asserted $C_{1,\omega}$ regularity. 
			
			Finally, since $DR(x)\to 0$ as $x\to O$, we may choose a sufficiently small closed ball $U_0$ centered at $O$ such that
			\[
			\sup_{x\in U_0} |DR(x) |<\min\bigl\{1-\lambda_2, |\Lambda^{-1} |^{-1}\bigr\}.
			\]
			Then $\sup_{x\in U_0} |DF(x) |<1$, while
			\[
			DF(x)=\Lambda+
			\begin{pmatrix}
				\partial_1 R(x)&\partial_2 R(x)\\
				0&0
			\end{pmatrix}
			\]
			is invertible on $U_0$ by the Neumann-series criterion. Since $F(O)=O$, a closed ball $U_0$ may be chosen so that $F(U_0)\subset U_0$. The inverse function theorem now shows that $F$ is a local $C^1$ diffeomorphism, and the derivative bound shows that it is a contraction on $U_0$. This completes the proof of Lemma \ref{pro:regularity}.
		\end{proof}

		Next, we introduce a key mapping $\Psi$ and establish its $C^1$ regularity.
		
		\begin{lemma}\label{jde14yl}
			Suppose that the mapping $F$ defined by  \eqref{eq:3.2}--\eqref{eq:3.3} admits a local $C^1$ linearizing diffeomorphism. Then, on a sufficiently small forward invariant neighborhood of $O$, the limit
			\begin{equation}\label{psidy}
				\Psi(x):=\lim_{n\to\infty}\Lambda^{-n}F^n(x)
			\end{equation}
			exists locally uniformly and defines a $C^1$ mapping.
		\end{lemma}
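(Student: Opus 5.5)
The plan is to transport the problem through the assumed linearization. Let $\Phi$ be a local $C^1$ linearizing diffeomorphism with $\Phi\circ F=\Lambda\circ\Phi$ and $D\Phi(O)=\mathrm{Id}$, and put $H:=\Phi^{-1}$, so that $H\circ\Lambda=F\circ H$. Then $F^n=H\circ\Lambda^n\circ\Phi$, and therefore
\[
\Lambda^{-n}F^n=\bigl(\Lambda^{-n}\circ H\circ\Lambda^n\bigr)\circ\Phi .
\]
It thus suffices to show that $H_n:=\Lambda^{-n}\circ H\circ\Lambda^n$ converges in $C^1$, locally uniformly on a small ball $B$, to some $C^1$ map $L$. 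Then $\Psi=L\circ\Phi$ is $C^1$, and the convergence is locally uniform on the forward invariant neighborhood $\Phi^{-1}(B)$. That neighborhood is forward invariant because $\Lambda B\subset B$, and Lemma~\ref{pro:regularity} makes $F$ a contraction there.

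\textbf{Step 1: second component.} Since the second component of $F$ is exactly $\lambda_2x_2$, the relation $H\circ\Lambda=F\circ H$ gives $H_2(\Lambda y)=\lambda_2H_2(y)$. Hence $(H_n)_2=\lambda_2^{-n}H_2(\Lambda^ny)=H_2(y)$ for every $n$, so this component is trivially convergent in $C^1$.

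\textbf{Step 2: first component, pointwise.} We have $(H_n)_1(y)=\lambda_1^{-n}H_1(\Lambda^ny)$. Pointwise it evolves by
\[
(H_{n+1})_1-(H_n)_1=\lambda_1^{-n-1}R\bigl(H(\Lambda^ny)\bigr).
\]
I will use the structure of $R$ here.
\begin{itemize}
\item On the invariant axis $\{x_2=0\}$ one has $R=0$, because $K(0)=0$.
\item Off that axis, the orbit $x^{(k)}=F^k(x)$ has $x^{(k)}_2=\lambda_2^kx_2$, while $|x^{(k)}_1|$ decays at least like $\lambda_1^{k}$ up to subexponential factors. So the orbit leaves the cone $\mathcal C_{\rm in}$ after finitely many steps, and from then on it stays in $\mathcal C_{\rm out}$, where $R\equiv0$.
\end{itemize}
Consequently the limit exists pointwise, and $\Psi$ is well defined pointwise. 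The content of the lemma is the $C^1$ and locally uniform statement.

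\textbf{Step 3: the derivative (main obstacle).} By Step 1, $DH_n$ is determined by
\[
\partial_1(H_n)_1(y)=\partial_1H_1(\Lambda^ny)\to\partial_1H_1(O)=1
\]
together with the cross term
\[
\partial_2(H_n)_1(y)=(\lambda_2/\lambda_1)^n\,u(\Lambda^ny),\qquad u:=\partial_2H_1 .
\]
The first converges uniformly, since $H\in C^1$ and $DH(O)=\mathrm{Id}$. The second is the real difficulty: the factor $(\lambda_2/\lambda_1)^n$ blows up, and $u(z)=o(1)$ alone does not control it.

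To handle it, I would differentiate $H_1(\Lambda y)=\lambda_1H_1(y)+R(H(y))$ in $y_2$. Using $\partial_2H_2=\partial_2H_2(O)$-type information from Step 1, this gives the linear recursion for $v_n:=(\lambda_2/\lambda_1)^nu(\Lambda^ny)$:
\[
v_{n+1}=\Bigl(1+\tfrac{\partial_1R(H(\Lambda^ny))}{\lambda_1}\Bigr)v_n+\tfrac{1}{\lambda_1}\Bigl(\tfrac{\lambda_2}{\lambda_1}\Bigr)^{n}\partial_2R\bigl(H(\Lambda^ny)\bigr)\,\partial_2H_2(\Lambda^ny).
\]
Along each orbit only the finitely many indices with $H(\Lambda^ny)\in\mathcal C_{\rm in}$ contribute. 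Outside the cone the recursion is trivial, so $v_n$ is eventually constant. Hence $DH_n$ converges pointwise to a limit $DL$.

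To upgrade this to locally uniform convergence, I would argue by contradiction with the hypothesis. Both $\Lambda^{-n}F^n$ and $H_n$ are related to the given $C^1$ map $\Phi$. The identity $D(\Lambda^{-n}F^n)=DH_n(\Phi)\,D\Phi$, combined with the eventual constancy of $v_n$ along each orbit, gives $v_n$ as an explicit finite expression in $D\Phi$ and $DH$ at points of the orbit. Continuity of $D\Phi$ and $DH$, together with $DR\to0$ at $O$ from~\eqref{fs}, should then give equicontinuity and uniform boundedness of $\{DH_n\}$ on $B$. Arzel\`a--Ascoli plus uniqueness of the pointwise limit then yields locally uniform $C^1$ convergence, so $L$ is $C^1$.

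The step I expect to fail first, and where most of the effort will go, is this uniform control of $v_n$ near the $x_1$-axis. There the number of steps spent inside $\mathcal C_{\rm in}$ is unbounded, and the accumulated product $\prod\bigl(1+\partial_1R/\lambda_1\bigr)$ is exactly the quantity that can diverge when \eqref{kjh} fails. The plan is to show that the existence of the $C^1$ map $\Phi$ forces these products, which equal ratios of entries of $D\Phi$ along the orbit, to remain bounded and to converge uniformly.
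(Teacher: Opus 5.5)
Your reduction $\Lambda^{-n}F^n=H_n\circ\Phi$ and Steps 1--2 are fine. Step 3, however, has a genuine gap, and it is not just a missing estimate.

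The claim that $v_n$ is eventually constant along every orbit fails on the invariant axis. If $H(y)=(x_1,0)$, then $H(\Lambda^ny)=F^n(H(y))=(\lambda_1^nx_1,0)$ never leaves $\mathcal C_{\mathrm{in}}$. Although $R$ and $\partial_1R$ vanish there, $\partial_2R(x_1,0)=\widehat\omega(|x_1|)K'(0)=\widehat\omega(|x_1|)\neq0$. Your own recursion then gives, for $0<\lambda_1<\lambda_2<1$,
\[
v_n(y)=v_0(y)+\frac{\partial_2H_2(y)}{\lambda_1}\sum_{k=0}^{n-1}\Bigl(\frac{\lambda_2}{\lambda_1}\Bigr)^k\widehat\omega(\lambda_1^k|x_1|),
\]
which, up to constants, are the partial sums of the series in \eqref{wj222}. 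Hence boundedness of $\{DH_n\}$ near the axis, let alone its uniform convergence, is equivalent to the summability expressed by \eqref{kjh}. That is exactly what fails in Section~\ref{SEC3}, and what Lemma~\ref{tdlm} uses the present lemma to refute.

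The tools you list cannot supply this bound. Continuity of $DH$ at $O$ only says $u(\Lambda^ny)=(\lambda_1/\lambda_2)^nv_n(y)\to0$. But $(\lambda_1/\lambda_2)^n\sum_{k<n}(\lambda_2/\lambda_1)^k\widehat\omega(\lambda_1^k|x_1|)\to0$ whether or not the series converges. So the Arzel\`a--Ascoli step would amount to deriving \eqref{kjh} from the existence of $\Phi$, i.e.\ to proving Theorem~\ref{TH1} itself.

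The missing idea is that the lemma only asserts that the uniform limit is $C^1$. You never need the $x_2$-derivatives of the approximants to converge. You already have the half that does converge, namely $\partial_1(H_n)_1=\partial_1H_1\circ\Lambda^n\to1$ uniformly. It suffices to integrate this in the first variable, starting from a curve on which the first component of every approximant vanishes.

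The paper works on the $x$-side. Since $DF^n$ is upper triangular, the $(1,1)$ entry of $D\Phi(F^nx)DF^n(x)=\Lambda^nD\Phi(x)$ gives $\lambda_1^{-n}\partial_1\pi_1F^n(x)=\varphi_{11}(x)/\varphi_{11}(F^nx)\to\varphi_{11}(x)$ uniformly. Because $R(0,x_2)=0$ forces $\pi_1F^n(0,x_2)=0$, integration in $x_1$ yields $\Psi(x)=(\Phi_1(x_1,x_2)-\Phi_1(0,x_2),x_2)^\top$, which is visibly $C^1$.

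In your notation the same argument reads as follows. $(H_n)_1$ vanishes on the $\Lambda$-invariant curve $\Phi(\{x_1=0\})$, which near $O$ is a $C^1$ graph $\{y_1=g(y_2)\}$. Therefore $(H_n)_1(y)=\int_{g(y_2)}^{y_1}\partial_1H_1(\Lambda^n(t,y_2))\,dt\to y_1-g(y_2)$ uniformly. This is precisely why $C^1$ regularity of the limit can be obtained, and then contradicted by the difference-quotient blow-up in Lemma~\ref{tdlm}, without any control on $\partial_2$ of the approximants.
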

		
		\begin{proof}
			Let $V$ be a sufficiently small forward invariant neighborhood of $O$ on which there is a $C^1$ diffeomorphism $\Phi$ satisfying
			\begin{equation}\label{geds}
				\Phi(F(x))=\Lambda\Phi(x),\quad \forall x\in V,
			\end{equation}
			with $\Phi(O)=O$ and $D\Phi(O)=\operatorname{Id}$.
			
			Since $\pi_2F(x)=\lambda_2x_2$, the derivative of every iterate has the triangular form
			\[
			DF^n(x)=
			\begin{pmatrix}
				a_n(x)&b_n(x)\\
				0&\lambda_2^n
			\end{pmatrix},
			\quad
			a_n(x):=\partial_1(\pi_1F^n)(x).
			\]
			Writing $D\Phi=(\varphi_{ij})_{1\leqslant i,j\leqslant2}$ and differentiating the equation obtained by iterating \eqref{geds} $n$ times ($ n \in \mathbb{N} $), we obtain
			\[
			D\Phi(F^n(x))DF^n(x)=\Lambda^nD\Phi(x).
			\]
			The first-row, first-column entry of this identity reads
			\[
			\varphi_{11}(F^n(x))a_n(x)=\lambda_1^n\varphi_{11}(x).
			\]
			Note that $F^n(x)$ converges to $O$ locally uniformly, while $\varphi_{11}(O)=1$. Consequently,
			\begin{equation}\label{dyfljx}
				\lim_{n\to\infty}	\lambda_1^{-n}a_n(x)=\lim_{n\to\infty}\frac{\varphi_{11}(x)}{\varphi_{11}(F^n(x))}
				=\varphi_{11}(x)
			\end{equation}
			locally uniformly on $V$.
			
			By \eqref{eq:3.3}, $F_1(0,x_2)=0$, and hence $\pi_1F^n(0,x_2)=0$ for every $n\geqslant1$. Therefore, utilizing \eqref{dyfljx} gives
			\[	\lim_{n\to\infty}\lambda_1^{-n}\pi_1F^n(x_1,x_2)
			=\lim_{n\to\infty}\int_0^{x_1}\lambda_1^{-n}a_n(t,x_2)dt
			=\int_0^{x_1}\varphi_{11}(t,x_2)dt
			=\Phi_1(x_1,x_2)-\Phi_1(0,x_2).\]
			Moreover, $ 	\lambda_2^{-n}\pi_2F^n(x_1,x_2)=x_2 $ for all $n$. Thus the limit in \eqref{psidy} exists locally uniformly and is explicitly given by
			\[
			\Psi(x_1,x_2)=\bigl(\Phi_1(x_1,x_2)-\Phi_1(0,x_2),x_2\bigr)^\top,
			\]
			which proves that $\Psi\in C^1(V)$. 
		\end{proof}

		We are now in a position to prove that such a contraction $F$ is not $C^1$ linearizable.

		\begin{lemma}\label{tdlm}
			The mapping $F$ defined by  \eqref{eq:3.2}--\eqref{eq:3.3} does not admit a local $C^1$ linearization near $O$.
		\end{lemma}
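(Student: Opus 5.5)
The plan is to argue by contradiction through Lemma~\ref{jde14yl}. I would detect the failure of $C^1$ linearizability in the \emph{cross derivative} $\partial_2\Psi_1$ along the invariant $x_1$-axis, where the two contraction rates couple. As before, I first treat $0<\lambda_1<\lambda_2<1$. Suppose $F$ admits a local $C^1$ linearization. Then Lemma~\ref{jde14yl} gives, on a forward invariant neighborhood $V$, a $C^1$ map $\Psi=\lim\Lambda^{-n}F^n$. In particular $\partial_2\Psi_1(x_1,0)$ is finite for small $x_1>0$. I will show that this partial derivative is $+\infty$.

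\textbf{Step 1: local form of $F$.} In the narrow cone $|x_2|\leqslant|x_1|$ we have $\chi\equiv1$, so $K(\theta)=\theta$ and
\[
R(x)=\Omega(|x_1|)\,\frac{x_2}{|x_1|}=\widehat\omega(|x_1|)\,x_2 .
\]
Consequently $F_1(x_1,x_2)=\lambda_1x_1+\widehat\omega(|x_1|)x_2$ there. On the axis $x_2=0$ we get $F_1=\lambda_1x_1$, so $\Psi_1(x_1,0)=x_1$.

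\textbf{Step 2: monotonicity in the first quadrant.} Since $K\geqslant0$ on $[0,\infty)$, we have $R\geqslant0$ whenever $x_1>0$ and $x_2\geqslant0$. The first quadrant is therefore forward invariant, and $\pi_1F^n(x)\geqslant\lambda_1^nx_1$ there. Inductively, $\pi_1F^k(x)\geqslant\lambda_1^kx_1$ and $\pi_2F^k(x)=\lambda_2^kx_2$. Since $\widehat\omega$ is increasing, every step $k$ at which the orbit still lies in the cone $|x_2|\leqslant|x_1|$ contributes at least $\widehat\omega(\lambda_1^kx_1)\lambda_2^kx_2$ to the first coordinate. All other steps contribute nonnegatively. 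Writing $\pi_1F^{k+1}=\lambda_1\pi_1F^k+R(F^kx)$ and iterating gives
\[
\lambda_1^{-n}\pi_1F^n(x_1,x_2)-x_1\;\geqslant\;\lambda_1^{-1}x_2\sum_{k=0}^{\min(n,N)-1}\Bigl(\tfrac{\lambda_2}{\lambda_1}\Bigr)^{k}\widehat\omega(\lambda_1^kx_1).
\]
Here $N=N(x_1,x_2)$ is the number of steps for which the orbit stays in the cone. A sufficient condition is $\lambda_2^kx_2\leqslant\lambda_1^kx_1$, so $N\geqslant\log(x_2/x_1)/\log(\lambda_1/\lambda_2)\to\infty$ as $x_2\to0^+$.

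\textbf{Step 3: divergence of the sum.} Letting $n\to\infty$ and dividing by $x_2$, the difference quotient $\bigl(\Psi_1(x_1,x_2)-\Psi_1(x_1,0)\bigr)/x_2$ is bounded below by the partial sum up to $N(x_1,x_2)$. Since $\lambda_2=\lambda_1^{1-\alpha_0}$, we have $(\lambda_2/\lambda_1)^k=(\lambda_1^k)^{-\alpha_0}$. Put $s_k=\lambda_1^kx_1$. By Lemma~\ref{prop21} and the monotonicity of $\omega$, each term satisfies
\[
\Bigl(\tfrac{\lambda_2}{\lambda_1}\Bigr)^{k}\widehat\omega(s_k)\;\gtrsim\;\int_{s_{k+1}}^{s_k}\frac{\omega(s)}{s^{1+\alpha_0}}\,ds .
\]
The full series therefore diverges, because \eqref{kjh} fails. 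As $x_2\to0^+$, $N\to\infty$, so the difference quotient tends to $+\infty$. This contradicts $\Psi\in C^1$.

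\textbf{Opposite signs.} For $\lambda_1\lambda_2<0$ or negative eigenvalues I would replace quadrant invariance by a two-step argument, or apply the same estimate to absolute values. Since $R$ is odd, the signs of the contributions $\widehat\omega(|y_1|)x_2$ relative to $\lambda_1^k$ can be tracked. If needed, I would flip the sign of $R$ in the construction so that all contributions to $\lambda_1^{-n}\pi_1F^n$ have the same sign.

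\textbf{Main obstacle.} The delicate point is justifying the lower bound after the orbit leaves the cone $|x_2|\leqslant|x_1|$, where $K$ is no longer linear. In the same-sign case the positivity $K\geqslant0$ handles this. In the opposite-sign case, sign control is the step requiring the most care.
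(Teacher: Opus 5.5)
Your argument for $0<\lambda_1<\lambda_2<1$ is correct and is essentially the paper's proof: positivity of $R$ in the first quadrant gives $x_k\geqslant\lambda_1^k\xi$, the orbit stays in the cone where $R=\widehat\omega(|x_1|)x_2$ up to the cutoff time, and the variation-of-constants lower bound on the difference quotient of $\Psi_1$ at $(\xi,0)$ diverges by comparison with $\int_0^\xi\omega(t)t^{-1-\alpha_0}dt$. One caution on your closing remark, which lies outside this lemma's standing assumption $0<\lambda_1<\lambda_2<1$: a global sign flip of $R$ only handles the case of two negative eigenvalues. When $\lambda_1\lambda_2<0$, the normalized increments $\lambda_1^{-k-1}R(x_k,y_k)$ alternate in sign, which is why the paper instead inserts the scale-dependent factor $\cos\bigl(\pi\log|x_1|/|\log|\lambda_1||\bigr)$.
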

		
		\begin{proof}
			Suppose, to the contrary, that $F$ admits a local $C^1$ linearization. By Lemma \ref{jde14yl}, after shrinking $V$ to a forward invariant neighborhood if necessary, the mapping $\Psi$ defined by \eqref{psidy} exists and belongs to $C^1$.
			
			Choose \(\xi>0\) so small that
			\((\xi,y)\in V\) for every \(0\leqslant y\leqslant\xi\).
			Write
			\[
			F^k(\xi,y):=(x_k(y),y_k)=(x_k,\lambda_2^ky),\quad k\in \mathbb{N},
			\]
			where we denote $x_k=x_k(y)$ for brevity. Because \(R\geqslant0\) in the first quadrant, induction gives $x_k\geqslant\lambda_1^k\xi >0$.
			For $0<y<\xi$, define the permissible non-escape time by
			\begin{equation}\label{cutoffN}
				N(y):=\max\bigl\{N\in\mathbb{N}: \lambda_1^{-N}\lambda_2^Ny\leqslant\xi\bigr\}.
			\end{equation}
			Then $N(y)\to+\infty$ as $y\to0^+$. For every $0\leqslant k\leqslant N(y)$, we have
			\[
			0<\frac{y_k}{x_k}
			\leqslant\frac{\lambda_2^ky}{\lambda_1^k\xi}\leqslant1.
			\]
			Since $K(t)=t$ for $0\leqslant t\leqslant1$, we therefore have
			\begin{equation}\label{Rxjgj}
				R(x_k,y_k)
				=\Omega(x_k)K\left(\frac{y_k}{x_k}\right)=\widehat{\omega}(x_k)y_k\geqslant\widehat{\omega}(\lambda_1^k\xi)\lambda_2^ky,
				\quad 0\leqslant k\leqslant N(y),
			\end{equation}
			where we have used the monotonicity of $\widehat{\omega}$.
			
			Note that the first component of the iterates satisfies
			\[
			x_{k+1}=\lambda_1x_k+R(x_k,y_k).
			\]
			Consequently, for every $n \in \mathbb{N}^+$,
			\begin{equation}\label{dd1flgj}
				\lambda_1^{-n}x_n
				=\xi+\frac{1}{\lambda_1}\sum_{k=0}^{n-1}\lambda_1^{-k}R(x_k,y_k).
			\end{equation}
			Since the summands in \eqref{dd1flgj} are nonnegative, for $n>N(y)$, utilizing \eqref{Rxjgj} and $\lambda_2 =\lambda_1^{1-\alpha_0}$ yields
			\[
			\frac{\lambda_1^{-n}x_n-\xi}{y}
			\geqslant\frac{1}{\lambda_1}
			\sum_{k=0}^{N(y)}
			\left(\frac{\lambda_2}{\lambda_1}\right)^k
			\widehat{\omega}(\lambda_1^k\xi)
			=\frac{1}{\lambda_1}
			\sum_{k=0}^{N(y)}
			\frac{\widehat{\omega}(\lambda_1^k\xi)}{\lambda_1^{k\alpha_0}}.
			\]
			Passing to the limit as $n\to\infty$ and recalling that
			\[
			\Psi_1(\xi,0)=\lim_{n\to\infty}\lambda_1^{-n}\pi_1F^n(\xi,0)=\xi,
			\]
			we obtain
			\begin{equation}\label{xsdsgj1}
				\frac{\Psi_1(\xi,y)-\Psi_1(\xi,0)}{y}
				\geqslant\frac{1}{\lambda_1}
				\sum_{k=0}^{N(y)}
				\frac{\widehat{\omega}(\lambda_1^k\xi)}{\lambda_1^{k\alpha_0}}.
			\end{equation}
			
			In what follows, we assert that
			\[
			\lim_{y\to0^+}
			\frac{\Psi_1(\xi,y)-\Psi_1(\xi,0)}{y}=+\infty,
			\]
			whenever $\int_0^1 \frac{\omega(t)}{t^{\alpha_0+1}} dt = +\infty$. For every $k \in \mathbb{N}$, the monotonicity of $\omega$ gives
			\begin{equation}\label{wj111}
				\int_{\lambda_1^{k+1}\xi}^{\lambda_1^k\xi}
				\frac{\omega(t)}{t^{\alpha_0+1}}dt
				\leqslant\omega(\lambda_1^k\xi)
				\int_{\lambda_1^{k+1}\xi}^{\lambda_1^k\xi}
				t^{-\alpha_0-1}dt
				={C_{5}}\frac{\omega(\lambda_1^k\xi)}{\lambda_1^{k\alpha_0}},\quad {C_{5}}:=\frac{\lambda_1^{-\alpha_0}-1}{\alpha_0\xi^{\alpha_0}}>0.
			\end{equation}
			Therefore, summing \eqref{wj111} over $k \in \mathbb{N}$ and utilizing Lemma \ref{prop21} yields
			\begin{equation}\label{wj222}
				\sum_{k=0}^{\infty}
				\frac{\widehat{\omega}(\lambda_1^k\xi)}{\lambda_1^{k\alpha_0}} 
				\geqslant \frac{1}{2}\sum_{k=0}^{\infty}
				\frac{\omega(\lambda_1^k\xi)}{\lambda_1^{k\alpha_0}} 
				\geqslant \frac{1}{2{C_{5}}}\sum_{k=0}^\infty \int_{\lambda_1^{k+1}\xi}^{\lambda_1^k\xi} \frac{\omega(t)}{t^{\alpha_0+1}}dt 
				= \frac{1}{2{C_{5}}} \int_0^\xi \frac{\omega(t)}{t^{\alpha_0+1}}dt 
				= +\infty.
			\end{equation}
			Combining \eqref{xsdsgj1}, \eqref{wj222}, and the fact that $N(y)\to+\infty$ as $y\to0^+$, we establish the assertion.
			This contradicts the differentiability of the $C^1$ mapping $\Psi$ at $(\xi,0)$. Therefore, $F$ cannot be $C^1$ linearized near $O$.
		\end{proof}

		Lemma \ref{pro:regularity} and Lemma \ref{tdlm} complete the proof of Theorem \ref{TH1} for $ 0<\lambda_1<\lambda_2<1 $.

		It remains to explain the modifications required for the other possible signs of the eigenvalues. If both eigenvalues are negative, the preceding construction applies after replacing $\lambda_i$ by $|\lambda_i|$ in the estimates and multiplying $R$ by $-1$. Suppose now that $0<|\lambda_1|<|\lambda_2|<1$ and $\lambda_1\lambda_2<0$. The crux is to introduce an oscillatory factor to cancel out the alternating signs arising from the eigenvalues. Since the underlying mechanism is identical to the one above, we only provide a sketch of the proof for brevity. We replace \eqref{eq:3.3} by
		\[
		R(x_1,x_2)
		:=
		\begin{cases}
			\operatorname{sgn}(\lambda_1)
			\Omega(|x_1|)
			\operatorname{sgn}(x_1)
			\cos\left(
			\displaystyle
			\frac{\pi\log|x_1|}
			{|\log|\lambda_1||}
			\right)
			K\left(\dfrac{x_2}{x_1}\right),
			& x_1\neq0,\\[2ex]
			0,
			& x_1=0.
		\end{cases}
		\]
		The proof that $F\in C_{1,\omega}$ proceeds analogously, and the proof of Lemma~\ref{pro:regularity} applies with only minor notational changes. Choose $\delta>0$ sufficiently small such that
		\[
		\cos\left(
		\frac{\pi\log t}{|\log|\lambda_1||}
		\right)
		\geqslant
		\frac{1}{2},
		\quad \forall 1\leqslant t\leqslant1+\delta.
		\]
		Fix a sufficiently large $m\in\mathbb N$, set $\xi=|\lambda_1|^{2m}$, and write, as before, $F^k(\xi,y)=(x_k,\lambda_2^ky)$. We retain the permissible non-escape time $N(y)$ defined in \eqref{cutoffN}, with $\lambda_1$ and $\lambda_2$ replaced by $|\lambda_1|$ and $|\lambda_2|$, respectively. The same summation estimate as above, together with \(\operatorname{supp}\chi\subset[-2,2]\), gives by a standard bootstrap argument
		\[
		1
		\leqslant
		\frac{x_k}{\lambda_1^k\xi}
		\leqslant
		1+\delta,
		\quad\forall
		k\in\mathbb N,
		\]
		whenever \(y>0\) is sufficiently small.
		Indeed, as long as these inequalities hold, the cutoff factor can remain nonzero for only a uniformly bounded number of iterates beyond \(N(y)\), while the sum of all the corresponding normalized increments tends to zero as \(y\to0^+\). Since \(\xi=|\lambda_1|^{2m}\) and $\lambda_1\lambda_2<0$, we have
		\[
		\left(
		\frac{\operatorname{sgn}(\lambda_2)}
		{\operatorname{sgn}(\lambda_1)}
		\right)^k
		\cos\left(
		\frac{\pi\log|x_k|}
		{|\log|\lambda_1||}
		\right)
		=
		\cos\left(
		\frac{\pi}{|\log|\lambda_1||}
		\log\frac{x_k}{\lambda_1^k\xi}
		\right)
		\geqslant
		\frac12.
		\]
		Thus every nonzero normalized increment is nonnegative. Moreover,
		\[\left|\frac{\lambda_2^ky}{x_k}\right| = \frac{|\lambda_1|^{-k\alpha_0}y} {\xi\dfrac{x_k}{\lambda_1^k\xi}} \to+\infty \quad\text{as }k\to+\infty,\]
		so the cutoff factor eventually vanishes. For \(0\leqslant k\leqslant N(y)\), however, the cutoff factor equals one. Repeating the computation leading to \eqref{xsdsgj1}, we therefore obtain
		\[\frac{\Psi_1(\xi,y)-\Psi_1(\xi,0)}{y} \geqslant \frac{1}{2|\lambda_1|} \sum_{k=0}^{N(y)} \frac{\widehat{\omega}(|\lambda_1|^k\xi)} {|\lambda_1|^{k\alpha_0}}. \]
		When condition \eqref{kjh} fails, the series on the right-hand side diverges. Since \(N(y)\to+\infty\) as \(y\to0^+\), it follows that
		\[\frac{\Psi_1(\xi,y)-\Psi_1(\xi,0)}{y} \to+\infty \quad\text{as }y\to0^+,\]
		contradicting the differentiability of \(\Psi\) at \((\xi,0)\). This completes the proof for eigenvalues of opposite signs. We remark that the construction presented above is by no means unique. Indeed, a simpler variant can be obtained along the lines of \eqref{eq:jordan-counterexample} and~\eqref{eq:perturbation} in Section~\ref{PTJORDAN} by merely inserting appropriate powers of $-1$ into \eqref{eq:perturbation}, suitably adapted to the present setting.

		The proof of Theorem \ref{TH1} is now complete.

	\end{proof}

\subsection{Proof of Theorem \ref{TH3}}
We first consider Item~\ref{TH3-1}, where $0<|\lambda_1|<|\lambda_2|<1$. If $I_{\alpha_1}(\omega)<+\infty$, we estimate the products defining $\Lambda^{-n}DF^n$ directly. Otherwise, we first linearize the second coordinate and straighten an invariant curve. The off-diagonal derivative of the transformed mapping vanishes on the vertical axis, allowing both contraction rates to enter the estimate. Optimality follows from triangular mappings with the prescribed linear part.
	
	Recall that
	\[
	\alpha_1=\frac{\alpha_0}{1-\alpha_0}>\alpha_0>0,
	\quad
	|\lambda_1|=|\lambda_2|^{1+\alpha_1},
	\quad
	\left|\frac{\lambda_2}{\lambda_1}\right|
	=|\lambda_2|^{-\alpha_1}=|\lambda_1|^{-\alpha_0}.
	\]

	\begin{lemma}\label{Lemma25}
		For a concave modulus of continuity $\nu$ satisfying~\eqref{kjh}, define
\begin{alignat}{2}
	\label{eq:anis-operators}
	B_\nu(t)&=\int_0^{t^{1/\alpha_0}}\frac{\nu(s)}{s^{1+\alpha_0}}ds,
	&\quad H_\nu(t)&=t^{\alpha_1}\int_{t^{1/\alpha_0}}^t\frac{\nu(s)}{s^{1+\alpha_1}}ds,\\
	\label{eq:anis-operators-rest}
	K_\nu(t)&=B_\nu(t)+H_\nu(t),
	&\quad R_\nu(t)&=t^{\alpha_1}\int_t^1\frac{\nu(s)}{s^{1+\alpha_1}}ds.
\end{alignat}
		Both branches in~\eqref{eq:anisotropic-optimal-modulus} define concave moduli of continuity near zero. For $w=\omega_{\rm B}$, defined in~\eqref{omegaA}, there is a constant $C_{6}\geqslant1$ such that
		\begin{equation}\label{lxmgj1}
			\omega(t)\leqslant w(t)\leqslant K_\omega(t),\quad
			K_w(t)\leqslant C_{6}K_\omega(t),\quad
			R_w(t)\leqslant C_{6}\bigl(w(t)+R_\omega(t)\bigr).
		\end{equation}
		For every fixed $c\in(0,1)$ and $\gamma>0$, comparison on geometric intervals gives
		\begin{equation}\label{eq:anis-geometric-integrals}
			\sum_{n=0}^{\infty}c^{-n\gamma}\nu(c^nt)
			\asymp t^\gamma\int_0^t\frac{\nu(s)}{s^{1+\gamma}}ds
		\end{equation}
		whenever either side is finite, and
		\begin{equation}\label{lxmgj2}
			\sum_{n=0}^{\infty}\left|\frac{\lambda_2}{\lambda_1}\right|^n\min\{\nu(|\lambda_2|^nt),\nu(|\lambda_1|^n)\}
			\asymp K_\nu(t).
		\end{equation}
	\end{lemma}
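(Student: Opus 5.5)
The plan is to reduce everything to two elementary facts about a concave modulus $\nu$ with $\nu(0)=0$: $\nu$ is nondecreasing, and $s\mapsto\nu(s)/s$ is nonincreasing. These will be combined with comparison of sums and integrals on geometric intervals, and with Fubini. Throughout, $0<t<1$ and $\tau:=t^{1/\alpha_0}<t$; this inequality holds because $\alpha_0<1$.

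\emph{Concavity of the two branches.} For the first branch I substitute $s=tu$:
\[
t^{\alpha_1}\int_0^t\nu(s)s^{-1-\alpha_1}ds=\int_0^1\nu(tu)u^{-1-\alpha_1}du .
\]
This is an integral of concave nondecreasing functions of $t$, with finite total mass by assumption, so it is concave, increasing, and vanishes at $0$.

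For the second branch I differentiate. Using $\alpha_1/\alpha_0=1+\alpha_1$, the two boundary terms coming from $\tau$ (one from $B_\omega$, one from $t^{\alpha_1}\int_\tau^1$) are both equal to $\alpha_0^{-1}\omega(\tau)t^{-2}$, with opposite signs, and cancel. This leaves
\[
g(t):=\alpha_1t^{\alpha_1-1}\int_\tau^1\frac{\omega(s)}{s^{1+\alpha_1}}ds>0 .
\]
It remains to check that $g$ is nonincreasing.
\begin{itemize}
\item If $\alpha_1\leqslant1$, this is immediate.
\item If $\alpha_1>1$, I bound $\int_\tau^1\omega(s)s^{-1-\alpha_1}ds\leqslant \frac{\omega(\tau)}{\tau}\int_\tau^\infty s^{-\alpha_1}ds=\frac{\omega(\tau)\tau^{-\alpha_1}}{\alpha_1-1}$. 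Then
\[
g'(t)\leqslant\alpha_1t^{\alpha_1-2}\omega(\tau)\tau^{-\alpha_1}\Bigl(1-\frac{t\tau'}{\tau}\Bigr)=\alpha_1t^{\alpha_1-2}\omega(\tau)\tau^{-\alpha_1}\Bigl(1-\frac1{\alpha_0}\Bigr)<0 .
\]
\end{itemize}
Continuity at $0$ follows from \eqref{kjh} together with $I_{\alpha_1}=\infty$, which gives $t^{\alpha_1}\int_\tau^1\to0$ by a split-and-dominate argument.

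\emph{Inequalities \eqref{lxmgj1}.} For the first inequality, monotonicity of $\omega(s)/s$ gives $w(t)\geqslant t\cdot\omega(t)/t=\omega(t)$. For the second, on $(0,\tau)$ we have $s^{-1-\alpha_0}\geqslant s^{-1}$, and on $(\tau,t)$ we have $t^{\alpha_1}s^{-1-\alpha_1}\geqslant s^{-1}$, so $w(t)\leqslant B_\omega(t)+H_\omega(t)$. For $K_w$ and $R_w$ I insert $w(s)=\int_0^s\omega(r)r^{-1}dr$ and swap the order of integration. For example,
\[
B_w(t)=\int_0^\tau\frac{\omega(r)}{r}\int_r^\tau s^{-1-\alpha_0}ds\,dr\leqslant\alpha_0^{-1}B_\omega(t).
\]
The $H_w$ part splits into $r<\tau$, which is bounded by $t^{\alpha_1}\tau^{-\alpha_1}\int_0^\tau\omega/r$ and then by $C B_\omega$ since $t^{\alpha_1}\tau^{-\alpha_1}=\tau^{-\alpha_0}\leqslant r^{-\alpha_0}$, and $r\in(\tau,t)$, which is bounded by $C H_\omega$. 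For $R_w$, the part $r<t$ gives at most $\alpha_1^{-1}w(t)$ and the part $r>t$ gives at most $\alpha_1^{-1}R_\omega(t)$.

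\emph{Sums \eqref{eq:anis-geometric-integrals} and \eqref{lxmgj2}.} For \eqref{eq:anis-geometric-integrals}, on $[c^{n+1}t,c^nt]$ I bound $\nu$ above and below by its endpoint values, as in \eqref{wj111}, and use $\nu(c^{n+1}t)\geqslant c\,\nu(c^nt)$ to shift indices. For \eqref{lxmgj2}, the minimum switches at the index $n_0$ where $|\lambda_2|^{n_0}t=|\lambda_1|^{n_0}$. Since $|\lambda_2/\lambda_1|=|\lambda_1|^{-\alpha_0}$, this means $|\lambda_1|^{n_0}\asymp\tau$, and then also $|\lambda_2|^{n_0}t\asymp\tau$.
\begin{itemize}
\item For $n\geqslant n_0$ the terms are $|\lambda_1|^{-n\alpha_0}\nu(|\lambda_1|^n)$. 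By \eqref{eq:anis-geometric-integrals} with $c=|\lambda_1|$ and $\gamma=\alpha_0$, their sum is $\asymp B_\nu(t)$.
\item For $n<n_0$ the terms are $|\lambda_2|^{-n\alpha_1}\nu(|\lambda_2|^nt)$. By the same interval comparison with $c=|\lambda_2|$, their sum is $\asymp t^{\alpha_1}\int_{\tau}^{t}\nu(s)s^{-1-\alpha_1}ds=H_\nu(t)$.
\end{itemize}

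The main obstacle is the bookkeeping at the switching index: $n_0$ is generally not an integer, and the boundary terms there must be shown comparable to both $B_\nu$ and $H_\nu$. The geometric-interval comparison handles this up to constants depending only on $\lambda_1,\lambda_2$. The concavity of the second branch when $\alpha_1>1$ is the other delicate point, and the derivative computation above is the key to it.
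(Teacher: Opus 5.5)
Your proposal is correct and follows essentially the same route as the paper. Both use the kernel comparison for $\omega\leqslant w\leqslant K_\omega$, Tonelli/integration by parts for $K_w$ and $R_w$ (the paper records exact identities, you only the needed upper bounds), the second-derivative check with $\int_\tau^1\omega(s)s^{-1-\alpha_1}ds\leqslant\omega(\tau)\tau^{-\alpha_1}/(\alpha_1-1)$ when $\alpha_1>1$, and the split of \eqref{lxmgj2} at the switching index. One minor point: $V(t)\to0$ needs only \eqref{kjh}, via $t^{\alpha_1}\int_\tau^1\omega(s)s^{-1-\alpha_1}ds=\int_\tau^1(\tau/s)^{\alpha_1-\alpha_0}\omega(s)s^{-1-\alpha_0}ds$ and dominated convergence, so the hypothesis $I_{\alpha_1}(\omega)=+\infty$ plays no role there.
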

	\begin{proof}
		Set $d=t^{1/\alpha_0}$. Concavity gives $\omega\leqslant w$, and comparison of the kernels on $(0,d)$ and $(d,t)$ gives $w\leqslant K_\omega$. By Tonelli's theorem,
		\[
		\int_0^1\frac{w(s)}{s^{1+\alpha_0}}ds
		=\frac1{\alpha_0}\int_0^1\frac{\omega(s)}s(s^{-\alpha_0}-1)ds<+\infty.
		\]
		Integration by parts, using $\alpha_1/\alpha_0=1+\alpha_1$, now yields
		\[
		\begin{aligned}
		K_w(t)&=\frac{H_\omega(t)}{\alpha_1}+\frac{B_\omega(t)}{\alpha_0}
		-\frac{w(d)}t-\frac{w(t)}{\alpha_1},
		\\
		R_w(t)&=\frac{w(t)-t^{\alpha_1}w(1)+R_\omega(t)}{\alpha_1}.
		\end{aligned}
		\]
		The boundary terms at zero vanish by~\eqref{kjh}, proving~\eqref{lxmgj1}.
		
		When $I_{\alpha_1}(\omega)<+\infty$, the formula $\omega_{\rm A}(t)=\int_0^1\omega(tu)u^{-1-\alpha_1}du$ gives concavity. For the other branch, put $V=B_\omega+H_\omega+R_\omega$. Then
		\[
		\begin{aligned}
		V'(t)&=\alpha_1 t^{\alpha_1-1}\int_d^1\frac{\omega(s)}{s^{1+\alpha_1}}ds>0,\\
		V''(t)&=\alpha_1(\alpha_1-1)t^{\alpha_1-2}\int_d^1\frac{\omega(s)}{s^{1+\alpha_1}}ds
		-\frac{\alpha_1}{\alpha_0}t^{-3}\omega(d).
		\end{aligned}
		\]
		For $\alpha_1\leqslant1$ this is nonpositive. For $\alpha_1>1$, concavity implies
		$\omega(s)\leqslant(s/d)\omega(d)$ for $s\geqslant d$, and hence
		$\int_d^1\omega(s)s^{-1-\alpha_1}ds\leqslant\omega(d)d^{-\alpha_1}/(\alpha_1-1)$; thus $V''(t)<0$ as well. Finally, $V(t)\to0$ follows by dominated convergence from
		\[
		t^{\alpha_1}\int_d^1\frac{\omega(s)}{s^{1+\alpha_1}}ds
		=\int_d^1\left(\frac ds\right)^{\alpha_1-\alpha_0}
		\frac{\omega(s)}{s^{1+\alpha_0}}ds.
		\]
		To prove~\eqref{eq:anis-geometric-integrals}, bound $\nu$ on each interval $[c^{n+1}t,c^nt]$ by its endpoint values. For~\eqref{lxmgj2}, split the sum at
		$N=\lfloor\log t/\log(|\lambda_1|/|\lambda_2|)\rfloor$. For $n\leqslant N$ the minimum is $\nu(|\lambda_2|^nt)$, and for $n>N$ it is $\nu(|\lambda_1|^n)$. Comparison on geometric intervals gives $H_\nu$ and $B_\nu$, respectively, with fixed changes of scale absorbed by concavity.
	\end{proof}
	
	We now prove the upper bound. Write $F(x)=\Lambda x+f(x)$. On a sufficiently small convex invariant neighborhood $U$, fix $C_{7}\geqslant1$ such that
	\[
	|Df(x)-Df(y)|\leqslant C_{7}\omega(|x-y|),\quad
	|F^n(x)|\leqslant q^n|x|,\quad |\lambda_2|<q<1.
	\]
	The ordinary Dini condition follows from~\eqref{kjh}. Therefore
	\[
	\prod_{j=0}^{\infty}\left(1+\frac{C_{7}}{|\lambda_2|}
	\omega(q^j\operatorname{diam}U)\right)<+\infty.
	\]
	The chain rule, also applied to every orbit segment, gives a constant $C_{8}\geqslant1$ satisfying
	\begin{equation}\label{eq:anis-exact-rates}
		|DF^n(x)|\leqslant C_{8}|\lambda_2|^n,\quad
		|F^n(x)-F^n(y)|\leqslant C_{8}|\lambda_2|^n|x-y|.
	\end{equation}
	Suppose first that $I_{\alpha_1}(\omega)<+\infty$, and put
	\[
	P_n(x)=\Lambda^{-n}DF^n(x),\quad
	E_n(x)=\Lambda^{-(n+1)}Df(F^n(x))\Lambda^n.
	\]
	Then $P_0=I$ and $P_{n+1}=(I+E_n)P_n$. For a constant $C_{9}\geqslant1$, concavity and~\eqref{eq:anis-exact-rates} give
\begin{equation}\label{eq:anis-strong-e}
	\sup_U|E_n|\leqslant C_{9}\left|\frac{\lambda_2}{\lambda_1}\right|^n\omega(|\lambda_2|^n), \quad |E_n(x)-E_n(y)|\leqslant C_{9}\left|\frac{\lambda_2}{\lambda_1}\right|^n\omega(|\lambda_2|^n|x-y|).
\end{equation}
	The first bound is summable. With
	$C_{10}=\exp(\sum_{n\geqslant0}\sup_U|E_n|)$,
	subtracting the two products gives
	\begin{equation}\label{eq:anis-product-difference}
		|P_n(x)-P_n(y)|\leqslant C_{10}^{2}
		\sum_{j=0}^{n-1}|E_j(x)-E_j(y)|.
	\end{equation}
	Uniform convergence of the products, followed by integration on segments from $O$, gives $\Psi=\lim_{n\to\infty}\Lambda^{-n}F^n$ in $C^1$. Since $D\Psi(O)=I$ and $\Psi(F(x))=\Lambda\Psi(x)$, this is a local linearization.
	Equations~\eqref{eq:anis-strong-e}, \eqref{eq:anis-product-difference} and~\eqref{eq:anis-geometric-integrals} yield, for some $C_{11}>0$,
	\[
	|D\Psi(x)-D\Psi(y)|\leqslant C_{11}\omega_{\rm A}(|x-y|).
	\]
	This proves the first branch of~\eqref{eq:anisotropic-optimal-modulus}.
	
	Now suppose that $I_{\alpha_1}(\omega)=+\infty$. Note that \eqref{eq:anis-exact-rates} still holds. Subtracting the products defining $DF^n$ gives, for some $C_{12}\geqslant1$,
	\begin{equation}\label{eq:anis-iterate-difference}
		|DF^n(x)-DF^n(y)|\leqslant C_{12}|\lambda_2|^n
		\sum_{j=0}^{n-1}\omega(|\lambda_2|^j|x-y|)
		\leqslant C_{12}^{2}|\lambda_2|^nw(|x-y|).
	\end{equation}
	The derivative increments of $\psi_{2,n}=\lambda_2^{-n}\pi_2F^n$ are bounded by a summable multiple of $\omega(|\lambda_2|^n)$. By~\eqref{eq:anis-iterate-difference}, their differences at $x,y$ are bounded by
	\[
	C_{13}\bigl(\omega(|\lambda_2|^n|x-y|)+\omega(|\lambda_2|^n)w(|x-y|)\bigr),
	\]
	for some $C_{13}\geqslant1$. Thus $\psi_2=\lim_{n\to\infty}\psi_{2,n}\in C_{1,w}$. Set
	\[
	T(x):=\begin{pmatrix}x_1\\\psi_2(x)\end{pmatrix},
	\quad H:=T\circ F\circ T^{-1}.
	\]
	Then $DT(O)=I$ and
	\[
	H(x,y)=\begin{pmatrix}\lambda_1x+h(x,y)\\\lambda_2y\end{pmatrix},
	\quad h\in C_{1,w},\quad Dh(O)=0.
	\]
	We use throughout that a local $C^1$ diffeomorphism and its inverse have the same concave derivative modulus, by the inverse derivative formula and local Lipschitz continuity.
	
	A smooth cutoff extends $h$, unchanged near $O$, so that $|Dh|_\infty$ is arbitrarily small and $h(x,y)=0$ for $|y|\geqslant\rho$. For some $C_{14}\geqslant1$,
	\begin{equation}\label{eq:anis-cutoff}
		|Dh(z)-Dh(z')|\leqslant C_{14}w(|z-z'|),\quad
		|Dh(z)|\leqslant C_{14}w(|z|).
	\end{equation}
	Choose $\ell>0$ small and require
	\[
	(|\lambda_1|+|Dh|_\infty)\ell+|Dh|_\infty\leqslant|\lambda_2|\ell.
	\]
	Set $g(y)=0$ for $|y|\geqslant\rho$, and extend successively toward zero by
	\begin{equation}\label{eq:anis-graph-recursion}
		g(\lambda_2 y)=\lambda_1g(y)+h(g(y),y).
	\end{equation}
	The cutoff ensures agreement of the values and first derivatives at the annular boundaries. Induction gives $|g(y)|\leqslant\ell|y|$ and $|g'|\leqslant\ell$ away from zero.
	
	Let $w_*(t)=w(\min\{t,\rho\})$ and define the concave modulus
	\begin{equation}\label{eq:anis-graph-modulus}
		\nu(t)=\sum_{j=0}^{\infty}|\lambda_2|^{\alpha_1 j}w_*(|\lambda_2|^{-j}t).
	\end{equation}
	Differentiating~\eqref{eq:anis-graph-recursion} gives
	\[
	g'(y)=\frac{\lambda_1+\partial_1h(g(y/\lambda_2),y/\lambda_2)}{\lambda_2}g'(y/\lambda_2)
	+\frac{\partial_2h(g(y/\lambda_2),y/\lambda_2)}{\lambda_2}.
	\]
	Since $w$ is Dini, products of the coefficients of $g'$ over $n$ backward iterates are bounded by $C_{15}|\lambda_2|^{\alpha_1 n}$, where $C_{15}\geqslant1$. Subtracting this recurrence at two points and using~\eqref{eq:anis-cutoff} yield
	\[
|g'(y)-g'(z)|\leqslant C_{16}\nu(|y-z|),\quad
		|g'(y)|\leqslant C_{16}\nu(|y|)
\]
	for some $C_{16}>0$. To justify the value at zero, first compare the recurrence for $g'(y)$, $y\neq0$, with the zero sequence at the fixed point. The terminal term vanishes by boundedness of $g'$, and the resulting bound $|g'(y)|\leqslant C_{16}\nu(|y|)$ shows that $g$ extends as a $C^1$ function with $g'(0)=0$. Subtracting the recurrence along two arbitrary backward orbits gives the first estimate. Thus $g\in C_{1,\nu}$. Comparing~\eqref{eq:anis-graph-modulus} with integrals gives
	\begin{equation}\label{eq:anis-graph-integral}
		\nu(t)\leqslant C_{17}\bigl(t^{\alpha_1}+w(t)+R_w(t)\bigr),
	\end{equation}
	for some $C_{17}\geqslant1$. Also $\nu$ satisfies~\eqref{kjh}, since integration of~\eqref{eq:anis-graph-modulus} gives a convergent geometric sum with ratio $|\lambda_2|^{\alpha_1-\alpha_0}<1$.
	
	To straighten this graph, set
	\[
	S(x,y):=\begin{pmatrix}x-g(y)\\y\end{pmatrix},
	\quad G:=S\circ H\circ S^{-1}.
	\]
	By~\eqref{eq:anis-graph-recursion},
	\[
G(x,y)=\begin{pmatrix}
\lambda_1x+h(x+g(y),y)-h(g(y),y)\\
\lambda_2y
\end{pmatrix}.
\]
	Write
	\[
	DG=\begin{pmatrix}\lambda_1+A&B\\0&\lambda_2\end{pmatrix},\quad
	A(x,y)=\partial_1h(x+g(y),y),
	\]
	and
	
\begin{equation}\label{eq:anis-flat-cancellation}
	B(x,y)=[\partial_1h(x+g(y),y)-\partial_1h(g(y),y)]g'(y)+\partial_2h(x+g(y),y)-\partial_2h(g(y),y).
\end{equation}
	Consequently, for a constant $C_{18}\geqslant1$,
	\[
	|A(x,y)|\leqslant C_{18}w(|(x,y)|),\quad
	|B(x,y)|\leqslant C_{18}w(|x|).
	\]
	As in~\eqref{eq:anis-exact-rates}, these bounds imply
	\begin{equation}\label{eq:anis-flat-rates}
		\begin{aligned}
		|DG^n(z)|&\leqslant C_{19}|\lambda_2|^n,\quad
		|\pi_1G^n(z)|\leqslant C_{19}|\lambda_1|^n,\\
		|G^n(z)-G^n(z')|&\leqslant C_{19}|\lambda_2|^n|z-z'|.
		\end{aligned}
	\end{equation}
	on a small invariant neighborhood, for some $C_{19}\geqslant1$. For the estimate on $\pi_1G^n$, use
	\[
	G_1(x,y)=x\left(\lambda_1+\int_0^1\partial_1h(g(y)+ux,y)du\right)
	\]
	and the Dini condition on $w$.
	
	For $t=|z-z'|$, subtraction in~\eqref{eq:anis-flat-cancellation} yields a constant $C_{20}\geqslant1$ such that
\begin{equation}\label{eq:anis-flat-difference}
	\begin{aligned}
		|A(G^nz)-A(G^nz')|&\leqslant C_{20}w(|\lambda_2|^nt),\\
		|B(G^nz)-B(G^nz')|&\leqslant C_{20}
		\min\{w(|\lambda_2|^nt),w(|\lambda_1|^n)\}\\
		&\quad+C_{20}w(|\lambda_1|^n)\nu(|\lambda_2|^nt).
	\end{aligned}
\end{equation}
	The second estimate uses the cancellation in~\eqref{eq:anis-flat-cancellation}: the factor multiplying the difference of $g'$ is bounded by a multiple of $w(|\lambda_1|^n)$, while the remaining terms are bounded both by $w(|\lambda_2|^nt)$ and by $w(|\lambda_1|^n)$.
	
	For $G$, the matrices $E_n$ in the product argument satisfy
	\[
	\sup|E_n|\leqslant C_{21}\left(w(|\lambda_2|^n)+\left|\frac{\lambda_2}{\lambda_1}\right|^nw(|\lambda_1|^n)\right),
	\]
	for some $C_{21}\geqslant1$. This bound is summable by~\eqref{kjh} for $w$. Repeating the ordered-product argument in~\eqref{eq:anis-product-difference}, and using~\eqref{eq:anis-flat-difference} and~\eqref{lxmgj2}, give
	\begin{equation}\label{eq:anis-flat-output}
		|D\Psi(z)-D\Psi(z')|\leqslant C_{22}\bigl(K_w(t)+\nu(t)\bigr)
	\end{equation}
	for the canonical linearization $\Psi$ of $G$ and some $C_{22}>0$. Indeed, concavity gives $|\lambda_2/\lambda_1|^nw(|\lambda_1|^n)\geqslant w(|\lambda_2|^nt)$ when $|\lambda_1|^n<|\lambda_2|^nt$ and $t\leqslant1$, so $\sum_{n=0}^{\infty}w(|\lambda_2|^nt)$ is bounded by a multiple of $K_w(t)$. For the remaining term, monotonicity of $\nu$ gives
\[
\sum_{n=0}^{\infty}\left|\frac{\lambda_2}{\lambda_1}\right|^n w(|\lambda_1|^n)\nu(|\lambda_2|^nt) \leqslant \nu(t)\sum_{n=0}^{\infty} \left|\frac{\lambda_2}{\lambda_1}\right|^nw(|\lambda_1|^n).
\]
	The last series is finite by~\eqref{kjh} for $w$ and is independent of $t$.
	
	The original conjugacy is $\Phi=\Psi\circ S\circ T$. Composition and~\eqref{lxmgj1}, \eqref{eq:anis-graph-integral} give, for some $C_{23}>0$,
	\[
	|D\Phi(x)-D\Phi(y)|\leqslant C_{23}
	\bigl(K_\omega(t)+R_\omega(t)+t^{\alpha_1}\bigr),\quad t=|x-y|.
	\]
	Since $I_{\alpha_1}(\omega)=+\infty$, we have $t^{\alpha_1}=o(R_\omega(t))$. By~\eqref{eq:anis-operators} and \eqref{eq:anis-operators-rest}, this proves the second branch of~\eqref{eq:anisotropic-optimal-modulus}. The same modulus holds for $\Phi^{-1}$.
	
	For optimality, we construct examples that give the necessary lower bounds. To include opposite signs of the eigenvalues, put $\sigma=\operatorname{sgn}(\lambda_1/\lambda_2)$ and define, for $0<c<1$ and $u\neq0$,
	\begin{equation}\label{eq:anis-phase}
		\vartheta_c(u)=
		\begin{cases}1,&\sigma=1,\\
			\cos\bigl(\pi\log|u|/\log c\bigr),&\sigma=-1.
		\end{cases}
	\end{equation}
	Then $\vartheta_c(\pm cu)=\sigma\vartheta_c(u)$, and, for a constant $C_{24}\geqslant1$,
	$|\vartheta_c^{(j)}(u)|\leqslant C_{24}|u|^{-j}$ for $j=0,1,2$ and $c\in\{|\lambda_1|,|\lambda_2|\}$.
	For the averaged modulus $\widehat\omega$ from Lemma~\ref{prop21}, the product $\vartheta_c(u)\widehat\omega(|u|)$ extends by zero and has $\omega$-modulus. This follows, as in Lemma~\ref{pro:regularity}, by using its size when the two points are far apart relative to their distance from zero, and its derivative bound by a multiple of $\omega(|u|)/|u|$ otherwise.
	
	\begin{lemma}\label{Lemma26}
		There is a triangular $C_{1,\omega}$ mapping with linear part $\Lambda$ such that every normalized $C_{1,\eta}$ linearization satisfies $\omega_{\rm A}=\mathcal O(\eta)$ if $I_{\alpha_1}(\omega)<+\infty$, and $R_\omega=\mathcal O(\eta)$ otherwise.
	\end{lemma}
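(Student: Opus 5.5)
The plan is to use a single triangular example for both branches, with the nonlinearity placed in the first coordinate as a function of $x_2$ alone:
\[
F(x_1,x_2)=\begin{pmatrix}\lambda_1x_1+p(x_2)\\ \lambda_2x_2\end{pmatrix},\qquad
p(u):=\int_0^u\vartheta_{|\lambda_2|}(s)\,\widehat\omega(|s|)\,ds .
\]
Here $\vartheta_{|\lambda_2|}$ is the phase factor from \eqref{eq:anis-phase}, possibly multiplied by a fixed sign. By the remark preceding the lemma, $p'=\vartheta\widehat\omega$ has $\omega$-modulus, so $F\in C_{1,\omega}$, $DF(O)=\Lambda$, and $F$ is a local diffeomorphism. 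The phase is chosen so that, in the series below, all terms carry the same sign. This uses $\vartheta(\pm cu)=\sigma\vartheta(u)$ with $\sigma=\operatorname{sgn}(\lambda_1/\lambda_2)$.

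Step 1 (reduction to the $x_2$-axis). Let $\Phi$ be any normalized $C_{1,\eta}$ linearization. Set $a(y):=\partial_2\Phi_1(0,y)$, so that $a(0)=0$ and $|a(y)|\leqslant C\eta(|y|)$. Differentiate the first component of $\Phi\circ F=\Lambda\Phi$ with respect to $x_2$ at points $(0,y)$. Note that $F(0,y)=(p(y),\lambda_2y)$ and $\partial_1\Phi_1=1+o(1)$, while the $x_1$-displacement $p(y)=O(|y|\omega(|y|))$ only changes $a$ by $O(\eta(|y|\omega(|y|)))$. This yields a first-order functional equation
\[
\lambda_1a(y)=\lambda_2a(\lambda_2y)+p'(y)\bigl(1+o(1)\bigr)+(\text{controlled error}).
\]
To avoid the error term altogether, I would instead work on the invariant curve through the orbit of $(0,y)$, or simply note that the error is dominated by the main term. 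Either way, write $\rho:=\lambda_2/\lambda_1$, with $|\rho|=|\lambda_2|^{-\alpha_1}$. The solutions of this equation are a particular solution plus a solution of the homogeneous equation $a(\lambda_2y)=\rho^{-1}a(y)$, and the latter have the form $|y|^{\alpha_1}P(\log|y|)$ with $P$ periodic of period $|\log|\lambda_2||$ (up to the sign flip when $\lambda_2<0$).

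Step 2 (the particular solution). Summing the recursion gives the particular solution as a series.
\begin{itemize}
\item If $I_{\alpha_1}(\omega)<+\infty$, iterate toward zero: $a_{\rm p}(y)=\lambda_1^{-1}\sum_{k\geqslant0}\rho^k p'(\lambda_2^ky)$. The terms share a sign by the choice of phase, so \eqref{eq:anis-geometric-integrals} with $c=|\lambda_2|$, $\gamma=\alpha_1$, together with Lemma~\ref{prop21}, gives $|a_{\rm p}(y)|\asymp\omega_{\rm A}(|y|)=|y|^{\alpha_1}G(|y|)$, where $G(t)=\int_0^t\omega(s)s^{-1-\alpha_1}ds\to0$.
\item If $I_{\alpha_1}(\omega)=+\infty$, the series toward zero diverges, so sum outward from scale $y$ to scale $1$. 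This gives $a(y)=|y|^{\alpha_1}\bigl(P(\log|y|)\pm\widetilde G(y)\bigr)$ with $\widetilde G(y)\asymp\int_{|y|}^1\omega(s)s^{-1-\alpha_1}ds\to+\infty$. Then $|a(y)|\gtrsim R_\omega(|y|)$ for small $y$, whatever $P$ is, and hence $R_\omega=\mathcal O(\eta)$.
\end{itemize}

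Step 3 (ruling out cancellation when $I_{\alpha_1}<\infty$). This is the main obstacle: a homogeneous term could in principle cancel $a_{\rm p}$. I would split into two cases.
\begin{itemize}
\item If $P\equiv0$, then $\eta(|y|)\gtrsim|a(y)|\asymp\omega_{\rm A}(|y|)$ directly.
\item If $P\not\equiv0$, then $|P|\geqslant c>0$ on a set of phases of positive measure. Since $G\to0$, we get $|a(y)|\geqslant\tfrac c2|y|^{\alpha_1}$ for all small $y$ with phase in that set. This set meets every interval $[|\lambda_2|t,t]$. Monotonicity of $\eta$ and the decrease of $\eta(t)/t$ propagate the bound to $\eta(t)\gtrsim t^{\alpha_1}$ for all small $t$. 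Finally, $\omega_{\rm A}(t)=t^{\alpha_1}G(t)\leqslant I_{\alpha_1}(\omega)\,t^{\alpha_1}$, so again $\omega_{\rm A}=\mathcal O(\eta)$.
\end{itemize}

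The delicate technical points are two. First, one must justify that the error in Step 1 coming from $\partial_1\Phi_1\neq1$ and from the off-axis displacement $p(y)$ is $o$ of the main term; I expect this to follow from $\eta\to0$ and from using the full sequence of identities along the orbit rather than only one step. Second, in the opposite-sign case, one must check that the log-periodic phase indeed makes all summands of $a_{\rm p}$ coherent in sign.
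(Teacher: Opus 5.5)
Your overall plan matches the paper's proof. You use the same triangular example (the paper's $F_R$, with a factor $\varepsilon$ in front of $p$), a first-order recursion for a slow-direction derivative of $\Phi_1$, and the same dichotomy: your cases $P\equiv0$ and $P\not\equiv0$ correspond to whether the limit of $(\lambda_1/\lambda_2)^{-n}v(\lambda_2^nu)$ in the paper vanishes.

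\textbf{The gap is Step~1.} Differentiating $\Phi_1\circ F=\lambda_1\Phi_1$ at $(0,y)$ gives
\[
\lambda_1a(y)=\partial_1\Phi_1(p(y),\lambda_2y)\,p'(y)+\lambda_2\,\partial_2\Phi_1(p(y),\lambda_2y).
\]
The last term is not $\lambda_2a(\lambda_2y)$, because $F$ moves $(0,y)$ off the axis.
\begin{enumerate}
\item The only available bound on the discrepancy is $C\eta(|p(y)|)\leqslant C\eta(|y|\omega(|y|))$. This need not be $o(\widehat\omega(|y|))$ for a general concave $\eta$: take $\omega(t)=t^{\alpha}$ and $\eta(t)=t^{\beta}$ with $\beta(1+\alpha)<\alpha$.
\item After the weights $|\rho|^k=|\lambda_2|^{-\alpha_1k}$, the accumulated errors need not even be summable.
\item So ``the error is dominated by the main term'' is unproved. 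Without an exact equation, with coefficient exactly $1$ in front of $p'$, the particular-plus-log-periodic decomposition in Steps~2--3 has no basis.
\end{enumerate}

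\textbf{The repair is the paper's device: work on the right curve.} The orbit of $(0,y)$ does not lie on a convenient invariant curve, so your phrase ``the invariant curve through the orbit of $(0,y)$'' misidentifies the object. Take instead $\Gamma=\Phi^{-1}(\{0\}\times\mathbb R)$, i.e.\ the graph $x_1=g(x_2)$ defined by $\Phi_1(g(y),y)=0$.
\begin{enumerate}
\item The implicit function theorem gives $g(0)=g'(0)=0$ and $g'=-\partial_2\Phi_1/\partial_1\Phi_1$ evaluated at $(g(y),y)$. Hence $|g'(y)|\leqslant C\eta(|y|)$.
\item The relation $\Phi_1\circ F=\lambda_1\Phi_1$ makes $\Gamma$ invariant, so $g(\lambda_2y)=\lambda_1g(y)+p(y)$.
\item Therefore $v=g'$ satisfies $\lambda_2v(\lambda_2y)=\lambda_1v(y)+p'(y)$ exactly. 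With $v$ in place of $-a$, your Steps~2--3 go through.
\end{enumerate}

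Your other suggestion can also be made exact. Differentiating $\Phi_1\circ F^n=\lambda_1^n\Phi_1$ along the orbit gives
\[
a(y)=\partial_1\Phi_1(F^n(0,y))\,\lambda_1^{-1}\sum_{j<n}\rho^jp'(\lambda_2^jy)+\rho^n\,\partial_2\Phi_1(F^n(0,y)),
\]
to which the paper's dichotomy applies. However, this is not a functional equation in $y$, so you would have to drop the homogeneous-solution picture.

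\textbf{A smaller point.} For $\sigma=-1$ the summands are sign-coherent, but
\[
a_{\rm p}(y)=\lambda_1^{-1}\vartheta(y)\sum_k|\rho|^k\widehat\omega(|\lambda_2|^k|y|)
\]
carries the factor $\vartheta(y)$, which vanishes once per scale. So $|a_{\rm p}(y)|\asymp\omega_{\rm A}(|y|)$ fails at those points. Restrict, as the paper does, to $y=\lambda_2^nu$ with $u=|\lambda_2|^{2m}$, where $\vartheta(\lambda_2^nu)=\sigma^n$. Then interpolate between scales using concavity of $\omega_{\rm A}$ and monotonicity of $\eta$. Working along this single orbit also makes the general log-periodic solution unnecessary.
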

	\begin{proof}
		Fix $\varepsilon>0$ and put
		\[
		P(y)=\int_0^y\vartheta_{|\lambda_2|}(s)\widehat\omega(|s|)ds,\quad
		F_R(x,y):=\begin{pmatrix}\lambda_1x+\varepsilon P(y)\\\lambda_2y\end{pmatrix}.
		\]
		Then $F_R\in C_{1,\omega}$ and $DF_R(O)=\Lambda$, so it is a local contraction and diffeomorphism after restriction.
		If $\Phi\in C_{1,\eta}$ is a normalized linearization, the inverse image of the slow coordinate axis is an invariant $C_{1,\eta}$ graph $x=g(y)$, with $g(0)=g'(0)=0$. With $v=g'$,
		\begin{equation}\label{xsdsgj2}
			\lambda_2v(\lambda_2y)=\lambda_1v(y)+\varepsilon\vartheta_{|\lambda_2|}(y)\widehat\omega(|y|).
		\end{equation}
		Choose $u=|\lambda_2|^{2m}>0$ in its domain. Then $\vartheta_{|\lambda_2|}(\lambda_2^ju)=\sigma^j$, and iteration of~\eqref{xsdsgj2} gives
		\begin{equation}\label{zygj1-1}
			\left(\frac{\lambda_1}{\lambda_2}\right)^{-n}v(\lambda_2^nu)
			=v(u)+\frac{\varepsilon}{\lambda_1}
			\sum_{j=0}^{n-1}|\lambda_2|^{-\alpha_1 j}\widehat\omega(|\lambda_2|^ju).
		\end{equation}
		There is $C_{25}>0$ with $|v(y)|\leqslant C_{25}\eta(|y|)$.
		If $I_{\alpha_1}(\omega)<+\infty$, the right-hand side has a finite limit. If it is nonzero, then $t^{\alpha_1}=\mathcal O(\eta(t))$, and $\omega_{\rm A}(t)=o(t^{\alpha_1})$. If it is zero, then
		\[
		|v(\lambda_2^nu)|=\frac{\varepsilon}{|\lambda_1|}\sum_{k=0}^{\infty}|\lambda_2|^{-\alpha_1 k}\widehat\omega(|\lambda_2|^{n+k}u)
		\asymp\omega_{\rm A}(|\lambda_2|^nu).
		\]
		In either case~\eqref{eq:anis-geometric-integrals} and interpolation between consecutive geometric scales imply $\omega_{\rm A}=\mathcal O(\eta)$.
		If $I_{\alpha_1}(\omega)=+\infty$, the sum in~\eqref{zygj1-1} diverges and eventually dominates $v(u)$. Geometric comparison gives, for some $C_{26}>0$,
		\[
		R_\omega(|\lambda_2|^nu)\leqslant C_{26}|v(\lambda_2^nu)|
		\leqslant C_{26}C_{25}\eta(|\lambda_2|^nu).
		\]
		Here the contribution from $[u,1]$ is of order $(|\lambda_2|^nu)^{\alpha_1}$ and is absorbed for large $n$. To pass from geometric scales to all small $t$, use
		\[
		R_\omega(|\lambda_2|^{n+1}u)\geqslant |\lambda_2|^{\alpha_1}R_\omega(t),
		\quad |\lambda_2|^{n+1}u<t\leqslant |\lambda_2|^nu,
		\]
		and monotonicity of $\eta$.
	\end{proof}
	
	The remaining two lower bounds use the following identity. If
	$F_2(x,y)=\lambda_2y$ and $F_1(0,y)=0$, then every normalized $C^1$ linearization satisfies
	\begin{equation}\label{eq:TH3_Psi_Phi}
		\Psi_1(x,y):=\lim_{n\to\infty}\lambda_1^{-n}\pi_1F^n(x,y)
		=\Phi_1(x,y)-\Phi_1(0,y).
	\end{equation}
	Indeed, the first-column derivative of the conjugacy equation gives
	\[
	\lambda_1^{-n}\partial_1\pi_1F^n(x,y)
	=\frac{\partial_1\Phi_1(x,y)}{\partial_1\Phi_1(F^n(x,y))}.
	\]
	Integration from $0$ to $x$ proves~\eqref{eq:TH3_Psi_Phi}, locally uniformly. In particular $\Psi_1\in C_{1,\eta}$ whenever $\Phi\in C_{1,\eta}$.
	
	To obtain the lower bound $H_\omega=\mathcal O(\eta)$, take the even cutoff $\chi$ from Section~\ref{SEC3} and define, for $x\neq0$,
	\[
	F_H(x,y):=\begin{pmatrix}
	\lambda_1\bigl[x+\varepsilon\operatorname{sgn}(x)\Omega(|y|)\chi(\frac{y}{x})\bigr]\\
	\lambda_2y
	\end{pmatrix},
	\]
	and extend it continuously to $x=0$. The estimates in Lemma~\ref{pro:regularity} give $F_H\in C_{1,\omega}$: on the transition cone, the terms involving $\Omega(|y|)/|x|$ have size bounded by a multiple of $\omega(|x|)$ and derivative bounded by a multiple of $\omega(|x|)/|x|$; the remaining term $\operatorname{sgn}(y)\omega(|y|)$ has $\omega$-modulus by concavity. Both axes are linear and $DF_H(x,0)=\Lambda$.
	The argument for~\eqref{eq:anis-flat-rates}, \eqref{eq:anis-flat-difference}, and~\eqref{eq:anis-flat-output}, with $g=0$ and $w=\omega$, gives convergence of the normalized derivatives and $\partial_2\Psi_1(\xi,0)=0$.
	Fix a small $0<\xi<1$ and start at $(\xi,\xi t)$. The first component keeps the sign of $\lambda_1^n\xi$, and its absolute value is at least $|\lambda_1|^n\xi$. For
	$N(t)=\lfloor\log t/\log(|\lambda_1|/|\lambda_2|)\rfloor$, the cutoff is one for $0\leqslant n\leqslant N(t)$; all normalized increments are nonnegative. Hence
\begin{equation}\label{eq:anis-head-lower}
	\Psi_1(\xi,\xi t)-\xi \geqslant \varepsilon\sum_{n=0}^{N(t)}|\lambda_1|^{-n}\Omega(|\lambda_2|^n\xi t) \geqslant \frac{\varepsilon\xi^2t}{2}\sum_{n=0}^{N(t)}\left|\frac{\lambda_2}{\lambda_1}\right|^n\omega(|\lambda_2|^nt).
\end{equation}
	The last inequality uses Lemma~\ref{prop21} and $\omega(\xi s)\geqslant\xi\omega(s)$.
	For any normalized $C_{1,\eta}$ linearization,~\eqref{eq:TH3_Psi_Phi} and $\partial_2\Psi_1(\xi,0)=0$ bound the left-hand side of~\eqref{eq:anis-head-lower} by $C_{27}\xi t\eta(\xi t)$ for some $C_{27}>0$. Geometric comparison, as in~\eqref{lxmgj2}, yields
	\begin{equation}\label{eq:anis-head-necessary}
		H_\omega(t)\leqslant C_{28}\eta(t)
	\end{equation}
	with a constant $C_{28}>0$.
	
	\begin{lemma}\label{Lemma27}
		There is a $C_{1,\omega}$ triangular mapping with linear part $\Lambda$ for which every normalized $C_{1,\eta}$ linearization satisfies $B_\omega=\mathcal O(\eta)$.
	\end{lemma}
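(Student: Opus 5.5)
The plan is to reuse the cone construction from the proof of Theorem~\ref{TH1}, now with $\omega$ satisfying \eqref{kjh}. The $B_\omega$ term should then appear as the \emph{tail} of the series that diverged there, cut off by the cone. Concretely, take
\[
F_B(x,y)=\begin{pmatrix}\lambda_1x+\varepsilon R(x,y)\\ \lambda_2 y\end{pmatrix},\quad R(x,y)=\Omega(|x|)\operatorname{sgn}(x)K\Bigl(\frac yx\Bigr),
\]
with the oscillatory factor $\vartheta_{|\lambda_1|}$ from \eqref{eq:anis-phase} inserted when $\lambda_1\lambda_2<0$. Lemma~\ref{pro:regularity} gives $F_B\in C_{1,\omega}$ and $DF_B(O)=\Lambda$. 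Moreover $F_1(0,y)=0$ and $F_2=\lambda_2y$, so \eqref{eq:TH3_Psi_Phi} applies. Hence for any normalized $C_{1,\eta}$ linearization $\Phi$, the map $\Psi_1=\lim\lambda_1^{-n}\pi_1F_B^n$ is $C_{1,\eta}$ in $y$. We write the argument for $0<\lambda_1<\lambda_2$; the sign modifications are as at the end of Section~\ref{SEC3}.

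\textbf{Step 1.} Fix small $\xi>0$, write $F_B^k(\xi,y)=(x_k(y),\lambda_2^ky)$, and use \eqref{dd1flgj}. We have $R(x,s)=\widehat\omega(x)s\chi(s/x)$, with $0\leqslant\chi\leqslant1$, $\chi=1$ on $[0,1]$ and $\chi=0$ on $[2,\infty)$. At $y=0$ every iterate lies in the cone where $K(\theta)=\theta$. Differentiating $\sum_k\lambda_1^{-k}R(x_k,y_k)$ at $y=0$, the $x$-derivatives vanish there because $x_k(y)$ has zero derivative through $R(\cdot,0)\equiv0$. This gives
\[
\partial_2\Psi_1(\xi,0)=\frac{\varepsilon}{\lambda_1}\sum_{k\geqslant0}\Bigl(\frac{\lambda_2}{\lambda_1}\Bigr)^k\widehat\omega(x_k(0)),\qquad x_k(0)=\lambda_1^k\xi,
\]
which is finite by \eqref{kjh}. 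For $y>0$, \eqref{dd1flgj} gives
\[
\frac{\Psi_1(\xi,y)-\Psi_1(\xi,0)}{y}=\frac{\varepsilon}{\lambda_1}\sum_{k\geqslant0}\Bigl(\frac{\lambda_2}{\lambda_1}\Bigr)^k\widehat\omega(x_k(y))\,\chi\Bigl(\frac{\lambda_2^ky}{x_k(y)}\Bigr).
\]
For $k>N(y)+C$ the cutoff vanishes, because $x_k\leqslant(1+\delta)\lambda_1^k\xi$ by the bootstrap of Section~\ref{SEC3} with $\varepsilon$ small. Thus
\[
\partial_2\Psi_1(\xi,0)-\frac{\Psi_1(\xi,y)-\Psi_1(\xi,0)}{y}\geqslant\frac{\varepsilon}{\lambda_1}\sum_{k>N(y)+C}\Bigl(\frac{\lambda_2}{\lambda_1}\Bigr)^k\widehat\omega(\lambda_1^k\xi)-E(y),
\]
where $E(y)$ collects the excess from $\widehat\omega(x_k(y))>\widehat\omega(x_k(0))$ for $k\leqslant N(y)+C$.

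\textbf{Step 2 (main obstacle: the drift error $E(y)$).} The increments are nonnegative and summable with weights $\lambda_1^{-k}$. Hence $x_k(y)=x_k(0)(1+\mathcal O(y))$ uniformly in $k$, since the total normalized increment in \eqref{dd1flgj} is $\mathcal O(y)$. Because $\widehat\omega(ct)\leqslant c\,\widehat\omega(t)$ for $c\geqslant1$ by concavity, we get
\[
E(y)\leqslant Cy\sum_k(\lambda_2/\lambda_1)^k\widehat\omega(\lambda_1^k\xi)=\mathcal O(y).
\]
Since $\eta$ is concave with $\eta(0)=0$, we have $y=\mathcal O(\eta(y))$, so this error is harmless. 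If this crude bound fails in the opposite-sign case, I would instead differentiate the recursion $b_{k+1}=(\lambda_1+\varepsilon\partial_1R)b_k+\varepsilon\lambda_2^k\partial_2R$ and compare products, as in \eqref{eq:anis-product-difference}.

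\textbf{Step 3.} The left side of the inequality in Step~1 equals the average of $\partial_2\Psi_1(\xi,0)-\partial_2\Psi_1(\xi,s)$ over $s\in[0,y]$. By \eqref{eq:TH3_Psi_Phi} and $\Phi\in C_{1,\eta}$ it is at most $C\eta(y)$. The tail is handled by geometric comparison as in \eqref{wj111}, with $\lambda_1^{N(y)}\asymp(y/\xi)^{1/\alpha_0}$ and $\widehat\omega\geqslant\omega/2$ from Lemma~\ref{prop21}:
\[
\sum_{k>N(y)+C}|\lambda_1|^{-\alpha_0k}\omega(|\lambda_1|^k\xi)\gtrsim\int_0^{c\,y^{1/\alpha_0}}\frac{\omega(s)}{s^{1+\alpha_0}}ds\gtrsim B_\omega(y).
\]
Here the fixed rescalings $\xi$ and $c$ are absorbed by concavity, as in \eqref{lxmgj2}. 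Combining the three steps gives $B_\omega(y)\leqslant C(\eta(y)+y)=\mathcal O(\eta(y))$.
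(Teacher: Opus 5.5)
Your proposal is correct and follows essentially the same route as the paper. The paper's $F_B$ is your map up to the harmless factor $\lambda_1$ and the phase $\vartheta_{|\lambda_1|}$. The rest also matches step for step:
\begin{itemize}
\item a Gronwall bootstrap giving $1\leqslant x_k(y)/(\lambda_1^k\xi)\leqslant 1+\mathcal O(y)$;
\item an $\mathcal O(y)$ drift error via $\widehat\omega(ct)\leqslant c\,\widehat\omega(t)$;
\item comparison of the tail beyond the cutoff index with $B_\omega(y)$ on geometric intervals;
\item the $\mathcal O(\eta(y))$ bound from \eqref{eq:TH3_Psi_Phi}, combined with $y=\mathcal O(\eta(y))$.
\end{itemize}

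Two small points. First, the formula for $\partial_2\Psi_1(\xi,0)$ is best obtained by dominated convergence on the difference quotient. The chain-rule terms vanish because $\partial_1R(\cdot,0)\equiv0$, not because $x_k$ has zero derivative in $y$. Second, your hedge for opposite signs is unnecessary. The cosine factor differs from $1$ by a multiple of $x_k(y)/(\lambda_1^k\xi)-1=\mathcal O(y)$, so the crude bound goes through exactly as in the paper.
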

	\begin{proof}
		For $x\neq0$ put
		\[
		F_B(x,y):=\begin{pmatrix}
		\lambda_1\bigl[x+\varepsilon y\vartheta_{|\lambda_1|}(x)\widehat\omega(|x|)\chi(\frac{y}{x})\bigr]\\
		\lambda_2y
		\end{pmatrix},
		\]
		and extend it continuously to $x=0$. This perturbation is $\varepsilon\lambda_1\vartheta_{|\lambda_1|}(x)$ times~\eqref{eq:3.3}; the derivative bounds in~\eqref{eq:anis-phase} and Lemma~\ref{pro:regularity} give $F_B\in C_{1,\omega}$.
		Choose $\xi=|\lambda_1|^{2m}>0$ sufficiently small. By~\eqref{eq:anis-geometric-integrals},
		\begin{equation}\label{gjgj1}
			L_\xi:=\sum_{n=0}^{\infty}\left|\frac{\lambda_2}{\lambda_1}\right|^n\omega(|\lambda_1|^n\xi)<+\infty.
		\end{equation}
		Write $x_n=\pi_1F_B^n(\xi,y)$ and $z_n=x_n/(\lambda_1^n\xi)$; the second coordinate of the orbit is $\lambda_2^ny$. Put
		\[
		m_n=\begin{cases}1,&\sigma=1,\\
			\cos(\pi\log z_n/\log |\lambda_1|),&\sigma=-1.
		\end{cases}
		\]
		While $z_n$ is near one, the normalized recurrence is
		\[
		z_{n+1}=z_n+\frac{\varepsilon y}{\xi}\left|\frac{\lambda_2}{\lambda_1}\right|^n m_n
		\widehat\omega(|\lambda_1|^n\xi z_n)\chi(\lambda_2^ny/x_n).
		\]
		Concavity gives $\widehat\omega(|\lambda_1|^n\xi z_n)\leqslant z_n\widehat\omega(|\lambda_1|^n\xi)$ for $z_n\geqslant1$. Choose $y>0$ so small that
		$\exp(\varepsilon yL_\xi/\xi)$ lies in an interval around one on which $m_n\geqslant1/2$. Induction and the product bound then give a constant $C_{29}>0$ such that
		\begin{equation}\label{gjgj2}
			1\leqslant z_n\leqslant\exp(\varepsilon yL_\xi/\xi)
			\leqslant1+C_{29}y\leqslant2,\quad n\geqslant0.
		\end{equation}
		Using $\widehat\omega'(s)\leqslant\omega(s)/s$ and~\eqref{gjgj2}, one obtains, for some $C_{30}>0$,
		\begin{equation}\label{gjgj3}
			\sum_{n=0}^{\infty}\left|\frac{\lambda_2}{\lambda_1}\right|^n
			\left|m_n\widehat\omega(|\lambda_1|^n\xi z_n)-\widehat\omega(|\lambda_1|^n\xi)\right|
			\leqslant C_{30}y.
		\end{equation}
		Here the cosine difference is bounded by a multiple of $|z_n-1|$, and~\eqref{gjgj1} controls its weighted sum.
		By~\eqref{eq:TH3_Psi_Phi}, the quotient
		\begin{equation}\label{gjgj4}
			Q_\xi(y):=\frac{\Psi_1(\xi,y)-\xi}{y}
			=\varepsilon\sum_{n=0}^{\infty}\left|\frac{\lambda_2}{\lambda_1}\right|^n m_n\widehat\omega(|\lambda_1|^n\xi z_n)
			\chi(\lambda_2^ny/x_n)
		\end{equation}
		is well defined. Dominated convergence,~\eqref{gjgj1}, and~\eqref{gjgj3} give
		\[
\partial_2\Psi_1(\xi,0)=\varepsilon\sum_{n=0}^{\infty}\left|\frac{\lambda_2}{\lambda_1}\right|^n\widehat\omega(|\lambda_1|^n\xi).
\]
		Let $M(y)=\min\{n\in \mathbb{N}:|\lambda_2/\lambda_1|^ny\geqslant4\xi\}$. By~\eqref{gjgj2}, the cutoff in~\eqref{gjgj4} vanishes for $n\geqslant M(y)$. Consequently,
		\begin{equation}\label{gjgj6}
			\partial_2\Psi_1(\xi,0)-Q_\xi(y)
			\geqslant\varepsilon\sum_{n=M(y)}^{\infty}\left|\frac{\lambda_2}{\lambda_1}\right|^n\widehat\omega(|\lambda_1|^n\xi)
			-\varepsilon C_{30}y.
		\end{equation}
		Geometric comparison and Lemma~\ref{prop21} give a constant $C_{31}\geqslant1$ such that
		\[
\sum_{n=M(y)}^{\infty}\left|\frac{\lambda_2}{\lambda_1}\right|^n\widehat\omega(|\lambda_1|^n\xi)
			\geqslant C_{31}^{-1}\int_0^{|\lambda_1|^{M(y)}\xi}
			\frac{\omega(s)}{s^{1+\alpha_0}}ds.
\]
		The minimality of $M(y)$ implies $|\lambda_1|^{M(y)}\xi\asymp y^{1/\alpha_0}$, with fixed factors depending on $\xi,|\lambda_1|,|\lambda_2|$. Concavity gives, for every fixed $d>0$,
		\[
		\int_0^{du}\frac{\omega(s)}{s^{1+\alpha_0}}ds
		\geqslant\min\{1,d^{1-\alpha_0}\}
		\int_0^u\frac{\omega(s)}{s^{1+\alpha_0}}ds.
		\]
		Hence there is $C_{32}>0$ with
		\begin{equation}\label{gjgj8}
			\sum_{n=M(y)}^{\infty}\left|\frac{\lambda_2}{\lambda_1}\right|^n\widehat\omega(|\lambda_1|^n\xi)
			\geqslant C_{32}^{-1}B_\omega(y).
		\end{equation}
		Combining~\eqref{gjgj6} and~\eqref{gjgj8} yields
		\begin{equation}\label{gjgj9}
			B_\omega(y)\leqslant\frac{C_{32}}{\varepsilon}
			|\partial_2\Psi_1(\xi,0)-Q_\xi(y)|
			+C_{32}C_{30}y.
		\end{equation}
		For any normalized $C_{1,\eta}$ linearization,~\eqref{eq:TH3_Psi_Phi} gives a constant $C_{33}>0$ such that
		\begin{equation}\label{gjgj10}
			|\partial_2\Psi_1(\xi,0)-Q_\xi(y)|\leqslant C_{33}\eta(y).
		\end{equation}
		Since a positive concave modulus satisfies $y=\mathcal O(\eta(y))$,~\eqref{gjgj9} and \eqref{gjgj10} prove the assertion for all signs of the eigenvalues.
	\end{proof}
	
	If every mapping in the prescribed class admits a $C_{1,\eta}$ linearization, Lemma~\ref{Lemma26} gives $\omega_{\rm A}=\mathcal O(\eta)$ when $I_{\alpha_1}(\omega)<+\infty$. When $I_{\alpha_1}(\omega)=+\infty$, applying Lemmas~\ref{Lemma26} and~\ref{Lemma27}, together with~\eqref{eq:anis-head-necessary}, to $F_R,F_H,F_B$ gives
	\[
	\omega_{\rm A}(t)=B_\omega(t)+H_\omega(t)+R_\omega(t)=\mathcal O(\eta(t)).
	\]
	Since $F_R,F_H,F_B$ have the prescribed linear part, these bounds prove uniform sharpness. Together with the upper estimate, this proves Item~\ref{TH3-1}.
	
	It remains to prove Item~\ref{TH3-2}. Let $|\lambda_1|=|\lambda_2|=\lambda\in(0,1)$. Concavity gives
	\begin{equation}\label{zygj2-2}
		\omega_{\rm B}(t)=\int_0^t\frac{\omega(s)}s ds
		\geqslant\int_0^t\frac{\omega(t)}t ds=\omega(t).
	\end{equation}
	Since $|\Lambda^{\pm n}|=\lambda^{\pm n}$, the Dini condition suffices for the estimates~\eqref{eq:anis-exact-rates} and~\eqref{eq:anis-product-difference}. For some $C_{34}\geqslant1$, they give
	\begin{equation}\label{eq:isotropic-product-upper}
		|D\Psi(x)-D\Psi(y)|\leqslant C_{34}
		\sum_{n=0}^{\infty}\omega(\lambda^n|x-y|)
		\leqslant C_{34}^{2}\omega_{\rm B}(|x-y|).
	\end{equation}
	Thus $\Psi=\lim_{n \to \infty}\Lambda^{-n}F^n$ and its inverse belong to $C_{1,\omega_{\rm B}}$. Optimality follows from the next lemma.
	
	\begin{lemma}\label{Lemma28}
		For every prescribed isotropic diagonal linear part $\Lambda$, there exists a $C_{1,\omega}$ contraction for which every normalized $C_{1,\eta}$ linearization satisfies $\omega_{\rm B}=\mathcal O(\eta)$.
	\end{lemma}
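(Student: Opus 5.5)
We take a triangular map whose linear part is $\Lambda=\mathrm{diag}(\lambda_1,\lambda_2)$ with $|\lambda_1|=|\lambda_2|=\lambda$, and whose first component is perturbed by a function of $x_1$ alone along the invariant axis, modelled on the one-dimensional Sternberg mechanism. Concretely, with $\vartheta_\lambda$ from \eqref{eq:anis-phase} (used to absorb the sign of $\lambda_1$), put
\[
F(x_1,x_2):=\begin{pmatrix}\lambda_1\bigl[x_1+\varepsilon P(x_1)\bigr]\\ \lambda_2x_2\end{pmatrix},\qquad P(u):=\int_0^u\vartheta_\lambda(s)\widehat\omega(|s|)\,ds .
\]
By the remark following \eqref{eq:anis-phase}, $\vartheta_\lambda\widehat\omega(|\cdot|)$ has $\omega$-modulus, so $F\in C_{1,\omega}$ and $DF(O)=\Lambda$. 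After restriction, $F$ is a local contracting diffeomorphism. Here $\sigma$ in \eqref{eq:anis-phase} is replaced by $\operatorname{sgn}\lambda_1$, so that $\vartheta_\lambda(\lambda_1 u)=\operatorname{sgn}(\lambda_1)\vartheta_\lambda(u)$ on the relevant orbit.

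Let $\Phi\in C_{1,\eta}$ be a normalized linearization. By \eqref{eq:TH3_Psi_Phi}, valid since $F_2=\lambda_2x_2$ and $F_1(0,y)=0$, the function $\Psi_1$ lies in $C_{1,\eta}$. On the axis $y=0$, $\partial_1\Psi_1(x,0)=\lim_n\lambda_1^{-n}\prod_{j<n}\lambda_1\bigl(1+\varepsilon P'(x_j)\bigr)=\prod_{j\ge0}\bigl(1+\varepsilon\vartheta_\lambda(x_j)\widehat\omega(|x_j|)\bigr)$, where $x_j=\pi_1F^j(x,0)$. Fix $u=\lambda^{2m}$ small. As in the Lemma~\ref{Lemma27} bootstrap, the ratio $x_j/(\lambda_1^ju)$ stays in $[1,1+C\varepsilon]$, so the phases satisfy $\vartheta_\lambda(x_j)\geqslant 1/2$ and every factor exceeds $1$. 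The cocycle identity $\partial_1\Psi_1(x_n,0)\cdot\prod_{j<n}(\cdots)=\partial_1\Psi_1(u,0)$, together with $\partial_1\Psi_1(O)=1$, then gives
\[
\partial_1\Psi_1(x_n,0)-\partial_1\Psi_1(O)=\prod_{j\ge n}\bigl(1+\varepsilon\vartheta_\lambda(x_j)\widehat\omega(|x_j|)\bigr)-1\;\geqslant\;\frac{\varepsilon}{2}\sum_{j\ge n}\widehat\omega(|x_j|)\;\geqslant\;\frac{\varepsilon}{4}\sum_{k\ge0}\omega(\lambda^{k}\lambda^nu),
\]
where the last step uses Lemma~\ref{prop21} and $|x_j|\geqslant\lambda^j u$.

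By \eqref{eq:anis-geometric-integrals} with $\gamma\to0$ (comparison on geometric intervals $[\lambda^{k+1}t,\lambda^kt]$ against $\int\omega(s)/s\,ds$), the right-hand side is $\gtrsim\omega_{\rm B}(\lambda^nu)$. The left-hand side is at most $C\eta(|x_n|)\leqslant C\eta(2\lambda^nu)\leqslant 2C\eta(\lambda^nu)$ by concavity. So $\omega_{\rm B}(\lambda^nu)\leqslant C'\eta(\lambda^nu)$ for all $n$. To pass to general small $t$, take $\lambda^{n+1}u<t\leqslant\lambda^nu$. Monotonicity and concavity of $\omega_{\rm B}$ give $\omega_{\rm B}(t)\leqslant\omega_{\rm B}(\lambda^nu)\leqslant\lambda^{-1}\omega_{\rm B}(\lambda^{n+1}u)\leqslant C''\eta(t)$, hence $\omega_{\rm B}=\mathcal O(\eta)$.

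The main obstacle is the sign bookkeeping when $\lambda_1<0$, or when $\lambda_1\lambda_2<0$ (only $\lambda_1$ matters, since the perturbation lives on the $x_1$-axis). The bootstrap must keep every factor's phase $\ge1/2$ uniformly in $n$. This follows because the normalized increments sum to at most $\varepsilon\sum_j\widehat\omega(\lambda^ju)\lesssim\varepsilon\,\omega_{\rm B}(u)$, which is small for small $u$ under the Dini condition; we verify it exactly as in \eqref{gjgj2}. If preferred, the problem reduces entirely to the one-dimensional map $x\mapsto\lambda_1(x+\varepsilon P(x))$, since $\Phi_1(\cdot,0)$ linearizes it, which also yields the optimality statement of Theorem~\ref{TH5}.
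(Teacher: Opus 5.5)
Your argument fails when $\lambda_1<0$, and the cause is the phase factor you inserted. Since $\lambda_1^{-1}\partial_1F_1(x)=1+\varepsilon\vartheta_\lambda(x_1)\widehat\omega(|x_1|)$, the sign of $\lambda_1$ is already divided out of each factor of the cocycle $\partial_1\Psi_1(x,0)=\prod_{j\geqslant0}\bigl(1+\varepsilon\vartheta_\lambda(x_j)\widehat\omega(|x_j|)\bigr)$. So there is no sign to absorb. Worse, the identity you impose, $\vartheta_\lambda(\lambda_1u)=\operatorname{sgn}(\lambda_1)\vartheta_\lambda(u)$, gives $\vartheta_\lambda(x_j)=(\operatorname{sgn}\lambda_1)^j\cos\bigl(\pi\log z_j/\log\lambda\bigr)$ with $z_j=x_j/(\lambda_1^ju)$. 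The bootstrap of \eqref{gjgj2} keeps the cosine near $1$ (this is the quantity $m_j$ of Lemma~\ref{Lemma27}), not $\vartheta_\lambda(x_j)$ itself.

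For $\lambda_1<0$ the phases are therefore $\approx(-1)^j$, and the factors alternate above and below $1$. The sum $\sum_{j\geqslant n}\vartheta_\lambda(x_j)\widehat\omega(|x_j|)$ becomes an alternating series of size $\omega(\lambda^nu)$ up to quadratic corrections. It no longer dominates $\omega_{\rm B}(\lambda^nu)$, for instance when $\omega(t)=(\log(e/t))^{-p}$. Your claimed lower bound, and with it the proof for $\Lambda=\operatorname{diag}(-\lambda,\pm\lambda)$, collapses. In Lemmas~\ref{Lemma26} and~\ref{Lemma27} the phase was needed because there the perturbation enters additively and is weighted by powers of $\lambda_2/\lambda_1$, whose sign must be compensated. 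In this multiplicative mechanism it is harmful.

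The repair is to drop the phase. Take $P(u)=\int_0^u\widehat\omega(|s|)\,ds$, or, as the paper does, $F(x,y)=(\lambda_1f_0(x),\lambda_2y)$ with $f_0(s)=s+\varepsilon\int_0^s\omega(|u|)\,du$. The derivative $1+\varepsilon\omega(|s|)$ of $f_0$ already has $\omega$-modulus by subadditivity, so no averaging is needed. Then $P$ is odd and nondecreasing, so $|x+\varepsilon P(x)|\geqslant|x|$ gives $|x_j|\geqslant\lambda^ju$ for either sign of $\lambda_1$. Every factor is at least $1$, no bootstrap is required, and your tail-product estimate and geometric comparison go through.

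With this change your proof is essentially the paper's, and the remaining differences are minor. You control $\partial_1\Phi_1$ through \eqref{eq:TH3_Psi_Phi}. The paper instead notes that every normalized $C^1$ linearization coincides with $\Psi=\lim_{n\to\infty}\Lambda^{-n}F^n$ and bounds $\partial_1\Psi_1(t,0)-1$ at an arbitrary small $t$. That avoids both the upper bound $|x_n|\leqslant2\lambda^nu$ and the interpolation between geometric scales that your version needs.
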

	\begin{proof}
		Fix $\varepsilon>0$ and set
		\[
		f_0(s)=s+\varepsilon\int_0^s\omega(|u|)du,\quad
		F(x,y):=\begin{pmatrix}\lambda_1f_0(x)\\\lambda_2y\end{pmatrix}.
		\]
		This mapping is $C_{1,\omega}$, has derivative $\Lambda$ at $O$, and is a local contraction and diffeomorphism after restriction. For the orbit $(x_n,0)=F^n(t,0)$, $t>0$, one has $|x_n|\geqslant\lambda^nt$. The canonical conjugacy from~\eqref{eq:isotropic-product-upper} therefore satisfies
		\begin{equation}\label{zygj2-3}
			\partial_1\Psi_1(t,0)
			=\prod_{n=0}^{\infty}(1+\varepsilon\omega(|x_n|))
			\geqslant1+\varepsilon\sum_{n=0}^{\infty}\omega(\lambda^nt).
		\end{equation}
		The product converges by the Dini condition. Geometric comparison gives, for some $C_{35}>0$,
		\begin{equation}\label{zygj2-4}
			\omega_{\rm B}(t)\leqslant C_{35}
			\sum_{n=0}^{\infty}\omega(\lambda^nt).
		\end{equation}
		Every normalized $C^1$ linearization coincides with $\Psi$: from $\Phi(z)-z=o(|z|)$ and $|F^n(x)|\leqslant C_{36}\lambda^n|x|$ for a constant $C_{36}\geqslant1$,
		\[
		\Phi(x)-\Lambda^{-n}F^n(x)
		=\Lambda^{-n}(\Phi(F^n(x))-F^n(x))\to 0.
		\]
		If $\Phi\in C_{1,\eta}$,~\eqref{zygj2-3} and \eqref{zygj2-4} now imply, for some $C_{37}>0$,
		\[
\omega_{\rm B}(t)\leqslant\frac{C_{35}}{\varepsilon}
			|\partial_1\Phi_1(t,0)-1|\leqslant C_{37}\eta(t).
\]
		This proves the lemma and completes the proof of Theorem~\ref{TH3}.
	\end{proof}

	\subsection{Proof of Theorem~\ref{thm:focus}}\label{PTD}
	
	\begin{proof}
		In this subsection, $|\cdot|$ denotes the Euclidean norm and the induced operator norm. In view of Case~\ref{case-P-focus},
		\begin{equation}\label{eq:focus-powers}
			|\Lambda^n|=\lambda^n,
			\quad
			|\Lambda^{-n}|=\lambda^{-n},
			\quad
			n\in\mathbb{N}.
		\end{equation}
		
		We first prove Item~\ref{THfocus-1}, where the Dini condition~\eqref{dinicd} holds (i.e., $ 	\int_0^1 \frac{\omega(t)}{t} dt <+ \infty $). Write $F(x)=\Lambda x+f(x)$, so that $f(O)=O$ and $Df(O)=O$. It follows from the definition of $C_{1,\omega}$ that, for some constant ${C_{38}}>0$,
		\begin{equation}\label{eq:focus-flat-estimates}
			|Df(x)|\leqslant {C_{38}}\omega(|x|),
			\quad
			|f(x)|=\left|\int_0^1 Df(tx)x dt\right|\leqslant {C_{38}}|x|\omega(|x|).
		\end{equation}
		Fix $q\in(\lambda,1)$, and choose $0<r\leqslant1$ so small that the closed Euclidean ball $U=\{x\in\mathbb{R}^2:x_1^2+x_2^2\leqslant r^2\}$ is contained in the domain of $F$ and $\lambda+{C_{38}}\omega(r)\leqslant q$. By~\eqref{eq:focus-flat-estimates}, $F(U)\subset U$ and
		\begin{equation}\label{eq:focus-orbits}
			|F(x)|\leqslant q|x|,
			\quad
			|F^n(x)|\leqslant q^n|x|\leqslant q^nr,
			\quad
			\forall x\in U.
		\end{equation}
		
		By the monotonicity of $\omega$ and the Dini condition~\eqref{dinicd},
		\begin{equation}\label{eq:focus-series}
			\sum_{j=0}^{\infty}\omega(q^jr)\leqslant\omega(r)+\frac{1}{|\log q|}\sum_{j=1}^{\infty}\int_{q^jr}^{q^{j-1}r}\frac{\omega(t)}{t}dt=\omega(r)+\frac{1}{|\log q|}\int_0^r\frac{\omega(t)}{t}dt<+\infty.
		\end{equation}
		The chain rule together with~\eqref{eq:focus-flat-estimates} and~\eqref{eq:focus-orbits} therefore yields, for all $n\in\mathbb{N}$,
		\begin{align}
			|DF^n(x)|&\leqslant\prod_{j=0}^{n-1}\bigl(\lambda+|Df(F^j(x))|\bigr)\leqslant\lambda^n\prod_{j=0}^{n-1}\left(1+\frac{{C_{38}}}{\lambda}\omega(q^jr)\right)\notag \\
			\label{eq:focus-derivatives}	&\leqslant\lambda^n\exp\left(\frac{{C_{38}}}{\lambda}\sum_{j=0}^{\infty}\omega(q^jr)\right)=:{C_{39}}\lambda^n,
			\quad
			\forall x\in U,
		\end{align}
		where $0<{C_{39}}<+\infty$ by~\eqref{eq:focus-series}.
		
		Set $\Psi_n=\Lambda^{-n}F^n$. Direct computation gives
		\[
		D\Psi_{n+1}(x)-D\Psi_n(x)=\Lambda^{-(n+1)}Df(F^n(x))DF^n(x).
		\]
		It follows from~\eqref{eq:focus-powers}, \eqref{eq:focus-orbits}, \eqref{eq:focus-flat-estimates}, and~\eqref{eq:focus-derivatives} that
		\[
		|D\Psi_{n+1}(x)-D\Psi_n(x)|\leqslant\frac{{C_{38}}{C_{39}}}{\lambda}\omega(q^nr),
		\quad
		\forall x\in U.
		\]
		Since the bound is independent of $x$ and summable by~\eqref{eq:focus-series}, $\{D\Psi_n\}$ converges uniformly on $U$ to a continuous matrix-valued function $H$. Moreover, since $\Psi_n(O)=O$ and $U$ is convex, the segment integral yields
		\[
		|\Psi_{n+1}(x)-\Psi_n(x)|=\left|\int_0^1\bigl(D\Psi_{n+1}(tx)-D\Psi_n(tx)\bigr)x dt\right|\leqslant\frac{{C_{38}}{C_{39}}}{\lambda}r\omega(q^nr),
		\quad
		\forall x\in U.
		\]
		Thus $\{\Psi_n\}$ converges uniformly on $U$ to a continuous mapping $\Psi$. For any $x$ in the interior of $U$ and all sufficiently small $h$, passing to the limit in the segment integral gives
		\[
		\Psi(x+h)-\Psi(x)=\int_0^1 H(x+th)h dt.
		\]
		Consequently, $\Psi$ is of class $C^1$ and $D\Psi=H$. Moreover,
		\[
		\Psi(O)=O,
		\quad
		D\Psi(O)=\lim_{n\to\infty}\Lambda^{-n}DF^n(O)=\mathrm{Id}.
		\]
		The inverse function theorem shows that $\Psi$ is a local $C^1$ diffeomorphism. Finally,
		\[
		\Psi_n(F(x))=\Lambda\Psi_{n+1}(x),
		\]
		and passage to the limit gives $\Psi( F(x))=\Lambda\Psi(x)$. This proves the $C^1$ assertion in Item~\ref{THfocus-1}.

		We now estimate the modulus of $D\Psi$. In \eqref{eq:focus-flat-estimates}, choose ${C_{38}}$ at the outset so that it also bounds the full $\omega$-modulus of $Df$. By convexity of $U$ and \eqref{eq:focus-derivatives},
		\[
		|F^n(x)-F^n(y)|\leqslant C_{39}\lambda^n|x-y|.
		\]
		Set $A_n(x)=D\Psi_n(x)$ and
		\[
		B_n(x)=\Lambda^{-(n+1)}Df(F^n(x))\Lambda^n.
		\]
		Then $A_0=\operatorname{Id}$, $A_{n+1}=(\operatorname{Id}+B_n)A_n$, and
\[
\sup_{x\in U}\vert{}B_n(x)\vert{}\leqslant\frac{C_{38}}\lambda\omega(q^nr), \quad \vert{}B_n(x)-B_n(y)\vert{}\leqslant\frac{C_{38}}\lambda \omega(C_{39}\lambda^n\vert{}x-y\vert{}).
\]
		The first bound is summable, so every finite consecutive product of the factors $\operatorname{Id}+B_j(x)$ has norm at most
		\[
		C_{40}:=\exp\left(\frac{C_{38}}\lambda\sum_{j=0}^{\infty}\omega(q^jr)\right)<+\infty.
		\]
		Telescoping the ordered products gives
		\[
		A_n(x)-A_n(y)
		=\sum_{j=0}^{n-1}
		(\operatorname{Id}+B_{n-1}(x))\cdots(\operatorname{Id}+B_{j+1}(x))
		(B_j(x)-B_j(y))A_j(y),
		\]
		where an empty product is understood as $\operatorname{Id}$. Since $|A_j(y)|\leqslant C_{40}$, concavity yields
\[
\vert{}A_n(x)-A_n(y)\vert{}\leqslant C_{40}^2\sum_{j=0}^{n-1}\vert{}B_j(x)-B_j(y)\vert{}\leqslant C_{41}\sum_{j=0}^{n-1}\omega(\lambda^j\vert{}x-y\vert{}),
\]
		where $C_{41}:=C_{40}^2C_{38}\max\{1,C_{39}\}/\lambda$ is independent of $n,x,y$.
		For sufficiently small $t>0$, monotonicity gives
		\[
		\sum_{j=0}^{\infty}\omega(\lambda^jt)
		\leqslant\omega(t)+\frac1{|\log\lambda|}\int_0^t\frac{\omega(s)}s ds
		\leqslant\left(1+\frac1{|\log\lambda|}\right)\omega_{\rm B}(t),
		\]
		where the last step uses \eqref{zygj2-2}. 
		Passing to the uniform limit gives
\[
\vert{}D\Psi(x)-D\Psi(y)\vert{}\leqslant C_{42}\omega_{\rm B}(\vert{}x-y\vert{}), \quad C_{42}:=C_{41}\left(1+\frac1{\vert{}\log\lambda\vert{}}\right).
\]
		Thus $\Psi\in C_{1,\omega_{\rm B}}$. We also record the inverse argument for later use: if a local $C^1$ diffeomorphism $H$ has derivative modulus $\nu$, with $\nu$ concave, then
		\[
		DH^{-1}=(DH\circ H^{-1})^{-1}
		\]
		and $P^{-1}-Q^{-1}=P^{-1}(Q-P)Q^{-1}$ give the same modulus for $H^{-1}$ on a smaller neighborhood, by boundedness of the inverse derivatives and local Lipschitz continuity.

		We next prove Item~\ref{THfocus-2}, provided the Dini condition~\eqref{dinicd} fails (i.e., $ 	\int_0^1 \frac{\omega(t)}{t} dt =+ \infty $). Suppose that the Dini condition~\eqref{dinicd} fails. Recall the normalized averaged modulus $\widehat{\omega}$ introduced in Lemma~\ref{prop21}. Define
\begin{equation}\label{eq:focus-counterexample}
			F(x):=\bigl(1+\widehat{\omega}(|x|)\bigr)\Lambda\begin{pmatrix}x_1\\x_2\end{pmatrix}.
		\end{equation}
		The same argument as in the proof of Lemma~\ref{pro:regularity} shows that $F\in C_{1,\omega}$, $F(O)=O$, and $DF(O)=\Lambda$. After restricting its domain if necessary, the inverse function theorem shows that $F$ is a local $C^1$ diffeomorphism.
		
		Choose $q\in(\lambda,1)$ and $0<r\leqslant1$ sufficiently small that
		\[
		\lambda\bigl(1+\omega(r)\bigr)\leqslant q,
		\quad
		\omega(r)\leqslant1.
		\]
		For $0<|x|<r$, set $r_n=|F^n(x)|$. Since the rotation matrix in $\Lambda$ is an isometry,~\eqref{eq:focus-counterexample} gives
		\begin{equation}\label{eq:radial-orbits}
			r_{n+1}=\lambda r_n\bigl(1+\widehat{\omega}(r_n)\bigr),
			\quad
			\lambda^nr_0\leqslant r_n\leqslant q^nr_0.
		\end{equation}
		
		Since the Dini condition~\eqref{dinicd} fails, we have $\int_0^{r_0}\frac{\omega(t)}{t}dt=+\infty$. By the monotonicity of $\omega$,
		\[
		\sum_{j=0}^{\infty}\omega(\lambda^jr_0)\geqslant\frac{1}{|\log\lambda|}\sum_{j=0}^{\infty}\int_{\lambda^{j+1}r_0}^{\lambda^jr_0}\frac{\omega(t)}{t}dt=\frac{1}{|\log\lambda|}\int_0^{r_0}\frac{\omega(t)}{t}dt=+\infty.
		\]
		By~\eqref{eq:radial-orbits} and Lemma~\ref{prop21}, $0\leqslant\widehat{\omega}(r_j)\leqslant\omega(r_j)\leqslant\omega(r)\leqslant1$. Since $0\leqslant u\leqslant1$ implies $\log(1+u)\geqslant u/2$, we obtain
		\begin{equation}\label{eq:radial-divergence}
			\log\frac{r_n}{\lambda^nr_0}=\sum_{j=0}^{n-1}\log\bigl(1+\widehat{\omega}(r_j)\bigr)\geqslant\frac{1}{2}\sum_{j=0}^{n-1}\widehat{\omega}(r_j)\geqslant\frac{1}{4}\sum_{j=0}^{n-1}\omega(\lambda^jr_0)\to+\infty\quad\text{as }n\to\infty.
		\end{equation}
		
		Suppose, to the contrary, that $F$ admits a local $C^1$ linearizing diffeomorphism $\Phi$ with $\Phi(O)=O$ and $D\Phi(O)=\mathrm{Id}$, such that $\Phi(F(x))=\Lambda\Phi(x)$. Then we have
		\[
		|\Phi(x)|\geqslant\frac{1}{2}|x|
		\]
		for all sufficiently small $x$. By~\eqref{eq:radial-orbits}, we may choose $x\neq O$ so small that its entire forward orbit is contained in the domain of the conjugacy and in the neighborhood on which the preceding estimate holds. Then
		\[
		\frac{1}{2}r_n\leqslant|\Phi(F^n(x))|=|\Lambda^n\Phi(x)|=\lambda^n|\Phi(x)|.
		\]
		Consequently,
		\[
		\frac{r_n}{\lambda^nr_0}\leqslant\frac{2|\Phi(x)|}{|x|}<+\infty,
		\]
		contradicting~\eqref{eq:radial-divergence}. This proves Item~\ref{THfocus-2}.

		It remains to prove the optimality of the $\omega_{\rm B}$-estimate in Item~\ref{THfocus-1}. We consider the radial mapping defined in \eqref{eq:focus-counterexample}, now assuming that $\omega$ satisfies the Dini condition \eqref{dinicd}. Its normalized iterates and their limit satisfy
		\[
		\Lambda^{-n}F^n(x)=\left(\prod_{j=0}^{n-1}(1+\widehat\omega(r_j))\right)x,
		\quad
		\Psi(x)=p(|x|)x,
		\quad
		p(r)=\prod_{j=0}^{\infty}(1+\widehat\omega(r_j)),
		\]
		where $r_0=r$ and the recurrence is \eqref{eq:radial-orbits}. The product is finite by $r_j\leqslant q^jr$ and the Dini condition \eqref{dinicd}. Since $r_j\geqslant\lambda^jr$, nonnegativity of its factors and Lemma~\ref{prop21} give
		\[
		p(r)-1\geqslant\sum_{j=0}^{\infty}\widehat\omega(r_j)
		\geqslant\frac12\sum_{j=0}^{\infty}\omega(\lambda^jr)
		\geqslant\frac{\omega_{\rm B}(r)}{2|\log\lambda|}.
		\]
		For a unit vector $v$ perpendicular to $x\neq O$, the radial formula gives $D\Psi(x)v=p(|x|)v$. Hence
		\[
		|D\Psi(x)-\operatorname{Id}|\geqslant p(|x|)-1
		\geqslant\frac{\omega_{\rm B}(|x|)}{2|\log\lambda|}.
		\]
		For any normalized $C^1$ linearization $\Phi$, the conjugacy equation gives
		\[
		\Phi(x)-\Psi_n(x)
		=\Lambda^{-n}\bigl(\Phi(F^n(x))-F^n(x)\bigr)\to 0 \quad \text{as } n \to \infty,
		\]
		since $\Phi(z)-z=o(|z|)$ and~\eqref{eq:focus-derivatives} implies $|F^n(x)|\leqslant C_{39}\lambda^n|x|$. Hence $\Phi=\Psi$, and the preceding lower bound gives $\omega_{\rm B}(t)=\mathcal O(\eta(t))$ whenever $\Phi\in C_{1,\eta}$.

		The proof of Theorem~\ref{thm:focus} is now complete.
		
	\end{proof}

	\subsection{Proof of Theorem~\ref{thm:jordan}}\label{PTJORDAN}
	
	\begin{proof}

		In Case~\ref{case-P-degenerate-node}, the linear part is represented by
		\begin{equation}\label{eq:jordan-form}
			\Lambda=\begin{pmatrix}\lambda&1\\0&\lambda\end{pmatrix},
			\quad 0<|\lambda|<1.
		\end{equation}
		We first prove Item~\ref{THjordan-1} under the weighted Dini condition~\eqref{eq:log2dini} (i.e., $\int_0^1\frac{\omega(t)}t\left(\log\frac et\right)^2dt<+\infty$). Fix $q\in(|\lambda|,1)$ and $0<\kappa<q-|\lambda|$, and equip $\mathbb{R}^2$ with the equivalent norm
		\[
		|x|_*=\max\{|x_1|,\kappa^{-1}|x_2|\},
		\quad x=(x_1,x_2)^\top.
		\]
		We denote by $|\cdot|_*$ the corresponding operator norm on $\mathbb{R}^{2\times2}$. Since
		\[
		\Lambda^{\pm n}=\lambda^{\pm n}\begin{pmatrix}1&\pm n/\lambda\\0&1\end{pmatrix},
		\quad n\in\mathbb{N},
		\]
		we have $|\Lambda|_*=|\lambda|+\kappa<q$, as well as
		\begin{equation}\label{eq:jordan-powers}
			|\Lambda^{\pm n}|_*=|\lambda|^{\pm n}\left(1+\frac{n\kappa}{|\lambda|}\right)
			\leqslant {C_{43}}(n+1)|\lambda|^{\pm n},
			\quad n\in\mathbb{N},
		\end{equation}
		where the constant ${C_{43}}:=\max\{1,\kappa/|\lambda|\}>0$.
		
		Write $F(x)=\Lambda x+f(x)$ so that $f(O)=O$ and $Df(O)=O$. From the norm comparison $|x|\leqslant|x|_*\leqslant\kappa^{-1}|x|$ and $|A|_*\leqslant\kappa^{-1}|A|$ on $\mathbb{R}^2$ and $\mathbb{R}^{2\times2}$, the $C_{1,\omega}$ regularity gives a constant ${C_{44}}>0$ satisfying
		\begin{equation}\label{eq:jordan-flat-estimates}
			|Df(x)|_*\leqslant {C_{44}}\omega(|x|_*),
			\quad |f(x)|_*=\left|\int_0^1Df(tx)x dt\right|_*\leqslant {C_{44}}|x|_*\omega(|x|_*).
		\end{equation}
		Choose $0<r\leqslant1$ small enough so that the closed ball $U=\{x\in\mathbb{R}^2:|x|_*\leqslant r\}$ is contained in a neighborhood on which~\eqref{eq:jordan-flat-estimates} holds and satisfies $|\lambda|+\kappa+{C_{44}}\omega(r)\leqslant q$. Then $F(U)\subset U$, and
		\begin{equation}\label{eq:jordan-orbits}
			|F(x)|_*\leqslant q|x|_*,
			\quad |F^n(x)|_*\leqslant q^n|x|_*\leqslant q^nr,
			\quad \forall x\in U,\ n\in\mathbb{N}.
		\end{equation}
		
		For $j\in\mathbb{N}^+$ and $t\in[q^jr,q^{j-1}r]$,
		\[
		\log\frac et\geqslant1+(j-1)|\log q|
		\geqslant j\min\{1,|\log q|\}
		\geqslant\frac{j+1}2\min\{1,|\log q|\}.
		\]
		By monotonicity of $\omega$,
		\[
		(j+1)^2\omega(q^jr)\leqslant
		\frac4{|\log q|\min\{1,|\log q|\}^2}
		\int_{q^jr}^{q^{j-1}r}\frac{\omega(t)}t\left(\log\frac et\right)^2dt.
		\]
		Summing over $j\in\mathbb{N}^+$ and using~\eqref{eq:log2dini}, we obtain
		\begin{align}
			\sum_{j=0}^{\infty}(j+1)\omega(q^jr)
			&\leqslant\sum_{j=0}^{\infty}(j+1)^2\omega(q^jr) \notag \\
			&\leqslant\omega(r)+\frac{4}{|\log q|\min\{1,|\log q|\}^2}
			\int_0^r\frac{\omega(t)}{t}\left(\log\frac{e}{t}\right)^2dt<+\infty. \label{eq:jordan-series}
		\end{align}
		
		Observe that
		\[
		DF^{j+1}(x)-\Lambda DF^j(x)=Df(F^j(x))DF^j(x).
		\]
		Multiplying on the left by $\Lambda^{n-1-j}$ and summing over $0\leqslant j\leqslant n-1$ yields
		\begin{equation}\label{eq:variation}
			DF^n(x)=\Lambda^n+\sum_{j=0}^{n-1}\Lambda^{n-1-j}Df(F^j(x))DF^j(x),
			\quad n\in\mathbb{N}^+.
		\end{equation}
		Set $a_n(x)=|DF^n(x)|_*/((n+1)|\lambda|^n)$ and $C_{45} =C_{43}C_{44}/\vert{}\lambda\vert{}>0$. Estimates~\eqref{eq:jordan-powers}, \eqref{eq:jordan-flat-estimates}, \eqref{eq:jordan-orbits}, and~\eqref{eq:variation} lead to
		\[
		\begin{aligned}
			a_n(x)&\leqslant C_{43}+C_{45}\sum_{j=0}^{n-1}\frac{n-j}{n+1}(j+1)\omega(q^jr)a_j(x)\\
			&\leqslant C_{43}+C_{45}\sum_{j=0}^{n-1}(j+1)\omega(q^jr)a_j(x),
			\quad \forall x\in U.
		\end{aligned}
		\]
		The discrete Gronwall inequality therefore gives
		\[
		a_n(x)\leqslant C_{43}\prod_{j=0}^{n-1}
		\bigl(1+C_{45}(j+1)\omega(q^jr)\bigr)
		\leqslant C_{43}\exp\left(C_{45}\sum_{j=0}^{\infty}(j+1)\omega(q^jr)\right)
		=:C_{46}<+\infty,
		\]
		where $C_{46}>0$ is independent of $x$ and $n$, and finite by~\eqref{eq:jordan-series}. Consequently,
		\begin{equation}\label{eq:jordan-derivatives}
			|DF^n(x)|_*\leqslant {C_{46}}(n+1)|\lambda|^n,
			\quad n\in\mathbb{N},
			\quad \forall x\in U.
		\end{equation}
		
		Now consider $\Psi_n=\Lambda^{-n}F^n$. We have
		\[
		D\Psi_{n+1}(x)-D\Psi_n(x)=\Lambda^{-(n+1)}Df(F^n(x))DF^n(x).
		\]
		Combining~\eqref{eq:jordan-powers}, \eqref{eq:jordan-flat-estimates}, \eqref{eq:jordan-orbits}, and~\eqref{eq:jordan-derivatives}, we obtain
		\[
		\begin{aligned}
			|D\Psi_{n+1}(x)-D\Psi_n(x)|_*
			&\leqslant {C_{45}}{C_{46}}(n+1)(n+2)\omega(q^nr)\\
			&\leqslant2{C_{45}}{C_{46}}(n+1)^2\omega(q^nr),
			\quad \forall x\in U.
		\end{aligned}
		\]
		Since this bound is independent of $x$ and summable by~\eqref{eq:jordan-series}, the sequence $\{D\Psi_n\}$ converges uniformly on $U$ to a continuous matrix-valued mapping $H$. Since $\Psi_n(O)=O$ and $U$ is convex, the mean value estimate along segments yields
		\[
		\begin{aligned}
			|\Psi_{n+1}(x)-\Psi_n(x)|_*
			&=\left|\int_0^1\bigl(D\Psi_{n+1}(tx)-D\Psi_n(tx)\bigr)x dt\right|_*\\
			&\leqslant2{C_{45}}{C_{46}}r(n+1)^2\omega(q^nr),
			\quad \forall x\in U.
		\end{aligned}
		\]
		Consequently, $\{\Psi_n\}$ also converges uniformly on $U$ to a continuous mapping $\Psi$. 
		The remainder of the $C^1$ argument proceeds in complete analogy with that of Section~\ref{PTD}, proving the $C^1$ assertion in Item~\ref{THjordan-1}.

			The regularity argument follows Section~\ref{PTD}. Under~\eqref{eq:log2dini},
			\[
			\omega_{\mathrm C}(t)=\int_0^1\frac{\omega(tu)}u
			\left(\log\frac eu\right)^2du
			\]
			defines a concave modulus of continuity. Comparison on geometric intervals and concavity give, for each fixed $\sigma\in(0,1)$,
			\begin{equation}\label{eq:jordan-modulus-series}
				\omega_{\mathrm C}(t)\asymp
				\sum_{j=0}^{\infty}(j+1)^2\omega(\sigma^jt).
			\end{equation}
			Define $A_n,B_n$ as in Section~\ref{PTD}. The $q$-Lipschitz property of $F$ on $U$, the modulus of $Df$, and~\eqref{eq:jordan-powers} give
\[
\sup_U|B_n|_*\leqslant{C_{47}}(n+1)^2\omega(q^nr),\quad |B_n(x)-B_n(y)|_*\leqslant{C_{47}}(n+1)^2\omega(q^n|x-y|_*).
\]
			The first bound is summable by~\eqref{eq:jordan-series}. The same ordered-product argument, passage to the uniform limit, and~\eqref{eq:jordan-modulus-series} with $\sigma=q$ yield
			\[
			|D\Psi(x)-D\Psi(y)|_*
			\leqslant{C_{48}}\sum_{j=0}^{\infty}(j+1)^2\omega(q^j|x-y|_*)
			\leqslant{C_{49}}\omega_{\mathrm C}(|x-y|_*),
			\]
			with constants independent of $n,x,y$. Equivalence of norms and the inverse argument in Section~\ref{PTD} prove that $\Psi,\Psi^{-1}\in C_{1,\omega_{\mathrm C}}$.

			We next prove Item~\ref{THjordan-2}, assuming that~\eqref{eq:log2dini} fails (i.e., $\int_0^1\frac{\omega(t)}t\left(\log\frac et\right)^2dt=+\infty$).
		  Fix $0<\xi\leqslant1$, and set
		\begin{equation}\label{eq:centers}
			z_n=(z_{n,1}, z_{n,2})^\top=\Lambda^n(0,\xi)^\top=(n\lambda^{n-1}\xi,\lambda^n\xi)^\top,
			\quad r_n=\frac14|\lambda|^n\xi,
			\quad n\geqslant1.
		\end{equation}
		Since
		\[
		|z_n|=n|\lambda|^{n-1}\xi,
		\quad\frac{|z_{n+1}|}{|z_n|}=|\lambda|\left(1+\frac1n\right),
		\]
		we may choose $q\in(|\lambda|,1)$ and $N\geqslant1$ such that
		\[
		|\lambda|\left(1+\frac1n\right)\leqslant q,
		\quad\frac{(1-q)n}{|\lambda|}>\frac12,
		\quad n\geqslant N.
		\]
		For $m>n\geqslant N$, iteration yields $|z_m|\leqslant q^{m-n}|z_n|\leqslant q|z_n|$, and hence
		\begin{equation}\label{eq:disjoint}
			|z_n-z_m|\geqslant|z_n|-|z_m|\geqslant(1-q)n|\lambda|^{n-1}\xi
			>\frac12|\lambda|^n\xi>r_n+r_m.
		\end{equation}
		Thus, the closed balls $\overline{B}(z_n,r_n)$, $n\geqslant N$, are pairwise disjoint. Moreover, $r_n/|z_n|=|\lambda|/(4n)<1$, so none of these balls contains the origin $O$.
		
		Let $\chi\in C_c^\infty(\mathbb{R})$ be the symmetric cutoff function used in Section~\ref{SEC3}, satisfying $0\leqslant\chi\leqslant1$, $\chi(t)=1$ for $|t|\leqslant1$, and $\chi(t)=0$ for $|t|\geqslant2$. Recall the normalized averaged modulus $\widehat{\omega}$ introduced in Lemma~\ref{prop21}. Define
		\begin{equation}\label{eq:jordan-counterexample}
			F(x_1,x_2)=\begin{pmatrix}\lambda x_1+x_2\\\lambda x_2+R(x_1,x_2)\end{pmatrix},
		\end{equation}
		where
		\begin{equation}\label{eq:perturbation}
			R(x_1,x_2)=\lambda^2\sum_{n=N}^{\infty}\widehat{\omega}(r_n)(x_1-z_{n,1})
			\chi\left(\frac{2(x_1-z_{n,1})}{r_n}\right)
			\chi\left(\frac{2(x_2-z_{n,2})}{r_n}\right).
		\end{equation}
		The $n$-th summand is supported in $\overline{B}(z_n,r_n)$. In view of~\eqref{eq:disjoint}, at most one summand is nonzero at any given point, and the series is locally finite on $\mathbb{R}^2\setminus\{O\}$ because $|z_n|+r_n\to0$. An argument analogous to the proof of Lemma~\ref{pro:regularity} shows that $F\in C_{1,\omega}$, $F(O)=O$, and $DF(O)=\Lambda$. After restricting its domain to a suitable neighborhood of $O$, the inverse function theorem ensures that $F$ is a local $C^1$ diffeomorphism.
		
		At $z_n$, the factor $x_1-z_{n,1}$ vanishes and both cutoffs are identically one in a neighborhood of $z_n$. In conjunction with~\eqref{eq:centers}, \eqref{eq:jordan-counterexample}, and~\eqref{eq:perturbation}, this yields
		\begin{equation}\label{eq:orbit-data}
			F(z_n)=z_{n+1},
			\quad DF(z_n)=\begin{pmatrix}\lambda&1\\\lambda^2\widehat{\omega}(r_n)&\lambda\end{pmatrix},
			\quad n\geqslant N.
		\end{equation}
		The core mechanism underlying this counterexample is to exploit the logarithmic factor naturally generated by the Jordan block in the linear regime and inject the perturbation into its unoccupied entries, thereby establishing an effective coupling that ultimately gives rise to a squared-logarithmic factor.
		
		Suppose, to the contrary, that $F$ admits a local $C^1$ linearizing diffeomorphism $\Phi$ satisfying $\Phi(O)=O$, $D\Phi(O)=\mathrm{Id}$, and $\Phi(F(x))=\Lambda\Phi(x)$. Set
		\[
		P=\operatorname{diag}(1,\lambda),
		\quad e_n=\widehat{\omega}(r_n),
		\quad n\geqslant N,
		\]
		and, for all sufficiently large $n$, let $H_n=P^{-1}D\Phi(z_n)P$. Then $H_n\to\mathrm{Id}$ and $e_n\geqslant0$. By~\eqref{eq:jordan-form} and~\eqref{eq:orbit-data},
		\[
		P^{-1}\Lambda P=\lambda\begin{pmatrix}1&1\\0&1\end{pmatrix},
		\quad P^{-1}DF(z_n)P=\lambda\begin{pmatrix}1&1\\e_n&1\end{pmatrix}.
		\]
		Differentiating the conjugacy equation at $z_n$ gives $D\Phi(z_{n+1})DF(z_n)=\Lambda D\Phi(z_n)$ for all sufficiently large $n$. Conjugating by $P$ and cancelling $\lambda\neq0$, we obtain
		\begin{equation}\label{eq:matrix}
			H_{n+1}\begin{pmatrix}1&1\\e_n&1\end{pmatrix}
			=\begin{pmatrix}1&1\\0&1\end{pmatrix}H_n.
		\end{equation}
		Write $H_n=\begin{pmatrix}a_n&b_n\\c_n&d_n\end{pmatrix}$. Comparing entries in~\eqref{eq:matrix} yields
		\begin{equation}\label{eq:entries}
			\begin{aligned}
				c_n-c_{n+1} &= e_nd_{n+1},      &\quad d_n-d_{n+1} &= c_{n+1},\\
				a_n-a_{n+1} &= e_nb_{n+1}-c_n,  &\quad b_n-b_{n+1} &= a_{n+1}-d_n.
			\end{aligned}
		\end{equation}
		Choose $M\geqslant N$ sufficiently large so that these identities hold and, recalling that $H_n\to\mathrm{Id}$,
		\[
		d_{n+1}\geqslant\frac12,
		\quad |b_{n+1}|\leqslant\frac14,
		\quad n\geqslant M.
		\]
		For $m\geqslant n\geqslant M$, iterating the first identity in~\eqref{eq:entries} gives
		\[
		c_n-c_{m+1}=\sum_{k=n}^{m}e_kd_{k+1}.
		\]
		Taking $m\to\infty$ and using $c_{m+1}\to0$, we obtain
		\begin{equation}\label{eq:first-tail}
			c_n=\sum_{k=n}^{\infty}e_kd_{k+1}\geqslant\frac12\sum_{k=n}^{\infty}e_k\geqslant0,
			\quad 0\leqslant e_n\leqslant2c_n,
			\quad n\geqslant M.
		\end{equation}
		Set $\tau_n=d_n-a_n$, so that $\tau_n\to0$. By~\eqref{eq:entries} and~\eqref{eq:first-tail},
		\[
		\tau_n-\tau_{n+1}=c_n+c_{n+1}-e_nb_{n+1}
		\geqslant\frac12c_n+c_{n+1}\geqslant\frac12c_n,
		\quad n\geqslant M.
		\]
		Summing from $n$ to $m$, letting $m\to\infty$, and rearranging the nonnegative sums, we find
		\begin{equation}\label{eq:second-tail}
			\tau_n\geqslant\frac12\sum_{j=n}^{\infty}c_j
			\geqslant\frac14\sum_{j=n}^{\infty}\sum_{k=j}^{\infty}e_k
			=\frac14\sum_{k=n}^{\infty}(k-n+1)e_k\geqslant0,
			\quad n\geqslant M.
		\end{equation}
		The second and fourth identities in~\eqref{eq:entries} also give
		\[
		b_n-b_{n+1}=-\tau_{n+1}-c_{n+1}\leqslant-\tau_{n+1},
		\quad n\geqslant M.
		\]
		Since $b_n\to0$, summation yields $\sum_{n=M}^{\infty}\tau_{n+1}\leqslant-b_M<+\infty$. Together with~\eqref{eq:second-tail} and an interchange of the summation order, this leads to
		\[
		\begin{aligned}
			\frac18\sum_{k=M+1}^{\infty}(k-M)(k-M+1)e_k
			&= \frac14\sum_{k=M+1}^{\infty}e_k\sum_{n=M}^{k-1}(k-n) \\
			&= \frac14\sum_{n=M}^{\infty}\sum_{k=n+1}^{\infty}(k-n)e_k
			\leqslant\sum_{n=M}^{\infty}\tau_{n+1}\leqslant-b_M<+\infty.
		\end{aligned}
		\]
		This implies the following key estimate:
		\begin{equation}\label{eq:jordan-necessary-series}
			\sum_{n=N}^{\infty}(n+1)^2e_n
			<+\infty.
		\end{equation}
		
		On the other hand,~\eqref{eq:centers} gives
		\[
		\log\frac{e}{r_{n+1}}=\log\frac{4e}{\xi}+(n+1)\log\frac1{|\lambda|}
		\leqslant(n+1)\left(\log\frac{4e}{\xi}+\log\frac1{|\lambda|}\right).
		\]
		By the monotonicity of $\omega$,
		\[
		\int_{r_{n+1}}^{r_n}\frac{\omega(t)}t\left(\log\frac et\right)^2dt
		\leqslant\log\frac1{|\lambda|}\left(\log\frac{e}{r_{n+1}}\right)^2\omega(r_n)
		\leqslant {C_{50}}(n+1)^2\omega(r_n),
		\]
		where ${C_{50}}:=\log(1/|\lambda|)\bigl(\log(4e/\xi)+\log(1/|\lambda|)\bigr)^2>0$. Applying Lemma~\ref{prop21} and summing the interval estimates, we obtain
		\[
		\begin{aligned}
			\sum_{n=N}^{\infty}(n+1)^2e_n
			&\geqslant\frac12\sum_{n=N}^{\infty}(n+1)^2\omega(r_n)
			\geqslant\frac1{2{C_{50}}}\sum_{n=N}^{\infty}\int_{r_{n+1}}^{r_n}\frac{\omega(t)}t\left(\log\frac et\right)^2dt\\
			&=\frac1{2{C_{50}}}\int_0^{r_N}\frac{\omega(t)}t\left(\log\frac et\right)^2dt=+\infty,
		\end{aligned}
		\]
		contradicting~\eqref{eq:jordan-necessary-series}. This proves Item~\ref{THjordan-2}.

			For optimality under~\eqref{eq:log2dini}, retain the centers~\eqref{eq:centers} and perturbation~\eqref{eq:perturbation}, but replace the radii by
\[
r_n=c|z_n|,\quad 0<c<\frac{1-q}{1+q}.
\]
		This choice is made to ensure that $r_n$ and $|z_n|$ remain on the same scale, thereby allowing a uniform scale in the subsequent estimates for the modulus of continuity.
			For $m>n\geqslant N$, one has
			\[
			|z_n-z_m|\geqslant(1-q)|z_n|>r_n+r_m.
			\]
			Thus the supports are disjoint and avoid $O$. The derivative of each bump is bounded by ${C_{51}}\omega(r_n)$ and has Lipschitz constant at most ${C_{51}}\omega(r_n)/r_n$, uniformly in $n$. Concavity and the segment argument in Lemma~\ref{pro:regularity} give $F\in C_{1,\omega}$, while~\eqref{eq:orbit-data} remains valid.
			
			For any normalized linearization $\Phi\in C_{1,\eta}$, use the same $e_n,H_n$ and matrix entries as above. Equations~\eqref{eq:entries}, \eqref{eq:first-tail}, and~\eqref{eq:second-tail} apply unchanged, and summing $b_n-b_{n+1}=-\tau_{n+1}-c_{n+1}$ gives
			\begin{equation}\label{eq:jordan-sharp-tail}
				-b_n\geqslant\sum_{i=n}^{\infty}\tau_{i+1}
				\geqslant\frac18\sum_{j=1}^{\infty}j(j+1)e_{n+j}.
			\end{equation}
			Since $|z_{n+j}|\geqslant|\lambda|^j|z_n|$, Lemma~\ref{prop21} and concavity imply $e_{n+j}\geqslant(c/2)\omega(|\lambda|^j|z_n|)$. Hence~\eqref{eq:jordan-modulus-series} and~\eqref{eq:jordan-sharp-tail} yield
			\[
			\omega_{\mathrm C}(|z_n|)\leqslant{C_{52}}|b_n|
			\leqslant{C_{53}}\eta(|z_n|).
			\]
			The constants are independent of $n$. As $|\lambda|\leqslant|z_{n+1}|/|z_n|\leqslant q$, concavity of $\omega_{\mathrm C}$ and monotonicity of $\eta$ extend this bound to every sufficiently small $t>0$. Thus $\omega_{\mathrm C}(t)=\mathcal O(\eta(t))$, for either sign of $\lambda$, completing the proof of Theorem~\ref{thm:jordan}.
	\end{proof}

\subsection{Proof of Theorem~\ref{TH4}}

\begin{proof}
	We first prove Item~\ref{TH4-I} under the Dini condition~\eqref{dinicd}. We straighten the invariant foliations and then apply the following one-dimensional lemma on the invariant axes.
	
	\begin{lemma}\label{DINILEMMA}
		Let $g$ be a local $C^1$ diffeomorphism near $0$ satisfying $g(0)=0$ and $g'(0)=\lambda$ with $0<|\lambda|<1$. If $g'$ admits a Dini modulus of continuity $\nu$, then there exists a local $C^1$ diffeomorphism $h$ such that $h(0)=0$, $h'(0)=1$, and $h(g(x))=\lambda h(x)$. Moreover, $h,h^{-1}\in C_{1,\nu_1}$, where $\nu_1(t):=\int_0^t\frac{\nu(s)}{s} ds$.
	\end{lemma}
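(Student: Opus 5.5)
We will follow the scheme already used for the focus case in Section~\ref{PTD}: define $h$ as the limit of the normalized iterates $h_n:=\lambda^{-n}g^n$ and control their derivatives through a convergent infinite product. First fix $q\in(|\lambda|,1)$ and a small interval $U=[-r,r]$ on which $|g'(x)-g'(y)|\leqslant C\nu(|x-y|)$ and $|\lambda|+C\nu(r)\leqslant q$. Then $g(U)\subset U$ and $|g^n(x)|\leqslant q^n|x|$. The Dini condition gives $\sum_j\nu(q^jr)<+\infty$ by comparison on geometric intervals, exactly as in \eqref{eq:focus-series}.

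By the chain rule,
\[
h_n'(x)=\prod_{j=0}^{n-1}\frac{g'(g^j(x))}{\lambda}=\prod_{j=0}^{n-1}\bigl(1+\beta_j(x)\bigr),\qquad \beta_j(x):=\frac{g'(g^j(x))-\lambda}{\lambda}.
\]
Here $|\beta_j|\leqslant C|\lambda|^{-1}\nu(q^jr)$, so the products converge uniformly on $U$ to a continuous, nonvanishing limit $H$, and they are uniformly bounded above and below. Integrating from $0$ (using $h_n(0)=0$) gives uniform convergence $h_n\to h$ with $h'=H$ and $h'(0)=1$. Passing to the limit in $h_n(g(x))=\lambda h_{n+1}(x)$ yields the conjugacy $h\circ g=\lambda h$, and the inverse function theorem shows that $h$ is a local $C^1$ diffeomorphism.

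For the modulus, we need the sharper rate $|g^n(x)-g^n(y)|\leqslant C'|\lambda|^n|x-y|$. This follows from the uniform bound $|(g^n)'|\leqslant C'|\lambda|^n$, which is the one-dimensional analogue of \eqref{eq:focus-derivatives}. Then
\[
|\beta_j(x)-\beta_j(y)|\leqslant C''\nu(|\lambda|^j|x-y|).
\]
The telescoping identity for ordered products, as in Section~\ref{PTD} (here the factors commute, so it is even simpler), gives
\[
|h_n'(x)-h_n'(y)|\leqslant C'''\sum_{j\geqslant0}\nu(|\lambda|^j|x-y|).
\]
Comparing the series with the integral, using monotonicity together with $\nu(t)\leqslant\nu_1(t)$ (which follows from concavity, as in \eqref{zygj2-2}), bounds the right-hand side by $C\nu_1(|x-y|)$. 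Letting $n\to\infty$ gives $h\in C_{1,\nu_1}$. Finally, $h^{-1}\in C_{1,\nu_1}$ follows from $(h^{-1})'=1/(h'\circ h^{-1})$, since $h'$ is bounded away from zero and $h^{-1}$ is Lipschitz; this is the inverse argument recorded in Section~\ref{PTD}.

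The only delicate point is the Lipschitz rate $|\lambda|^n$ rather than $q^n$ for $g^n$. It is needed so that the sum is $\sum_j\nu(|\lambda|^jt)$ and is comparable to $\nu_1(t)$ with a constant depending only on $\lambda$. We obtain it by writing $|(g^n)'|=|\lambda|^n\prod_j|1+\beta_j|$ and using the Dini summability along the orbit. Everything else is routine.
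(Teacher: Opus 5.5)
Your proposal is correct and follows essentially the same route as the paper: normalized iterates $h_n=\lambda^{-n}g^n$, uniform convergence of the derivative products via Dini summability, the ordered-product telescoping estimate from the focus case, and the inverse-derivative argument for $h^{-1}$. The one difference is that the paper uses the cruder bound $|g^k(x)-g^k(y)|\leqslant q^k|x-y|$, which follows at once from $\sup|g'|\leqslant q$; this already suffices because $\sum_k\nu(q^kt)\leqslant(1+1/|\log q|)\nu_1(t)$ with $q$ fixed, so the sharper rate $|\lambda|^n$ that you flag as the delicate point is harmless but not actually needed.
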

	\begin{proof}
		Choose $q\in(|\lambda|,1)$ and $I=[-r,r]$ so that $g(I)\subset I$, $\sup_{x\in I}|g'(x)|\leqslant q$, and $1/2\leqslant g'/\lambda\leqslant3/2$. Set $h_n=\lambda^{-n}g^n$. Then
		\[
		h_n'(x)
		=\prod_{k=0}^{n-1}\frac{g'(g^k(x))}{\lambda},
		\quad
		\sup_{x \in I}\left|\frac{g'(g^k(x))}{\lambda}-1\right|
		\leqslant{C_{54}}\nu(q^kr).
		\]
		The Dini condition \eqref{dinicd} on $ \nu $ makes the latter bounds summable. Thus the logarithms of these positive products converge uniformly, and $h_n(0)=0$ gives $h_n\to h$ in $C^1(I)$ with $h'>0$ and $h'(0)=1$. Passing to the limit in $h_n (g(x))=\lambda h_{n+1}(x)$ gives the conjugacy equation.
		The argument in Section~\ref{PTD} together with concavity gives
		\[
		|h'(x)-h'(y)|\leqslant{C_{55}}\sum_{k=0}^{\infty}\nu(q^k|x-y|)
		\leqslant{C_{56}}\nu_1(|x-y|).
		\]
		The inverse assertion follows as in that section.
	\end{proof}

		Throughout the two-dimensional argument,
		\[
		\Lambda=\operatorname{diag}(\lambda_1,\lambda_2),
		\quad 0<|\lambda_1|<1<|\lambda_2|,
		\quad
		\delta=\frac{\min\{\log(1/|\lambda_1|),\log|\lambda_2|\}}
		{\log(|\lambda_2|/|\lambda_1|)}.
		\]
	We denote by $|\cdot|$ the Euclidean norm and its induced matrix operator norm, with respect to which all arclengths and leaf Jacobians are measured. By equivalence of norms and concavity of $\omega$, this geometric choice leaves the class $C_{1,\omega}$ invariant.
	
	We write the original local diffeomorphism as $F(x)=\Lambda x+f(x)$, where $f(O)=O$ and $Df(O)=O$. The $C_{1,\omega}$ regularity allows us, after shrinking the domain of $F$ if necessary, to fix a constant ${C_{57}}>0$ such that
	\[
	|Df(x)-Df(y)|\leqslant {C_{57}}\omega(|x-y|), \quad |f(x)|\leqslant {C_{57}}|x|\omega(|x|).
	\]
	We first extend the perturbation by a cutoff. Fix a smooth cutoff function $\widecheck\chi\in C_c^\infty(\mathbb R^2)$ identically equal to $1$ near $O$. For sufficiently small $r>0$, we define $f_r(x)=\widecheck\chi(x/r)f(x)$ on the original domain and extend it by zero elsewhere. By Leibniz's rule together with the concavity and subadditivity of $\omega$, there exists a constant ${C_{58}}>0$, independent of $r$, such that
	\begin{equation}\label{eq:saddle-cutoff-modulus}
		|Df_r(x)-Df_r(y)| \leqslant {C_{58}}\omega(|x-y|), \quad \forall x,y\in\mathbb R^2.
	\end{equation}
	Furthermore, there exists ${C_{59}}>0$ independent of $r$ satisfying
	\[
	|Df_r|_\infty \leqslant {C_{59}} \sup_{|x|\leqslant {C_{59}}r}|Df(x)| \to 0 \quad \text{as }r\to 0.
	\]
Choose $r>0$ to satisfy all smallness conditions below. Define
\[
G(x):=\begin{pmatrix}
\lambda_1x_1+(f_r(x))_1\\
\lambda_2x_2+(f_r(x))_2
\end{pmatrix}.
\]
If $|\Lambda^{-1}||Df_r|_\infty<1$, then $G$ is a global $C^1$ diffeomorphism of $\mathbb R^2$ belonging to $C_{1,\omega}$: for each $y\in\mathbb R^2$, the mapping $x\mapsto\Lambda^{-1}(y-f_r(x))$ is a contraction, and $DG(x)$ is everywhere invertible. The mapping $G$ coincides with $F$ near $O$.

	Decreasing $r$ further if necessary, the global Hadamard graph transform provides invariant stable and unstable foliations $\mathcal W^s$ and $\mathcal W^u$. The leaves are $C^1$ global graphs over the stable and unstable coordinate axes, respectively, with slopes uniformly bounded in absolute value by a fixed constant $\kappa<1$. The associated tangent line bundles $E^s$ and $E^u$ are continuous. Moreover, there exist constants ${C_{60}}\geqslant1$ and $\rho\in(0,1)$ such that
	\begin{align}
		d_s(G^np,G^nq) &\leqslant {C_{60}}\rho^n d_s(p,q), \quad q\in\mathcal W^s(p), \notag\\
		d_u(G^{-n}p,G^{-n}q) &\leqslant {C_{60}}\rho^n d_u(p,q), \quad q\in\mathcal W^u(p),\quad n\in \mathbb{N},
		\label{eq:saddle-leaf-contraction}
	\end{align}
	where $d_s$ and $d_u$ denote the intrinsic arclength distances on the corresponding leaves.
	
	To control the regularity of these foliations, we fix $r_0>0$ and truncate the modulus by setting $\omega_*(t)=\omega(\min\{t,r_0\})$. Modifying the constant in~\eqref{eq:saddle-cutoff-modulus} yields a constant ${C_{61}}>0$ such that
	\begin{equation}\label{eq:saddle-DG-modulus}
		|DG(p)-DG(q)| \leqslant {C_{61}}\omega_*(|p-q|), \quad \forall p,q\in\mathbb R^2.
	\end{equation}
	Choose
	\[
	L>\max\{|\lambda_2|,|\lambda_1|^{-1}\},
	\quad \frac{|\lambda_1|}{|\lambda_2|}<\vartheta<1,
	\quad \vartheta L<1.
	\]
	Such a choice is possible because
	\[
	\frac{|\lambda_1|}{|\lambda_2|}
	\max\{|\lambda_2|,|\lambda_1|^{-1}\}
	=\max\{|\lambda_1|,|\lambda_2|^{-1}\}<1.
	\]
	For sufficiently small $r$, both $G$ and $G^{-1}$ have Lipschitz constant less than $L$. We represent the tangent fields in slope coordinates as $E^u(p)=\operatorname{span}\{(u(p),1)^\top\}$ and $E^s(p)=\operatorname{span}\{(1,s(p))^\top\}$. For any matrix $M=(m_{ij})$ sufficiently close to $\Lambda$, the action of $M$ on the unstable and backward stable slopes is given by
	\[
	T_M^u(z)=\frac{m_{11}z+m_{12}}{m_{21}z+m_{22}}, \quad T_M^s(z)=\frac{m_{11}z-m_{21}}{m_{22}-m_{12}z}.
	\]
	At the unperturbed matrix $M=\Lambda$, both rational mappings have derivative equal to $\lambda_1/\lambda_2$. After decreasing $r$, both mappings have Lipschitz constant at most $\vartheta$ on the relevant slope intervals, and depend Lipschitz continuously on $M$. Setting $\Gamma(t)=\sup_{|p-q|\leqslant t}\max\{|u(p)-u(q)|,|s(p)-s(q)|\}$, this projective contraction implies the functional inequality
	\begin{equation}\label{eq:saddle-Gamma-recursion}
		\Gamma(t) \leqslant\vartheta\Gamma(Lt) +{C_{62}}\omega_*(Lt)
	\end{equation}
	for some constant ${C_{62}}>0$. Iterating~\eqref{eq:saddle-Gamma-recursion} and using the boundedness of $\Gamma$, we obtain with ${C_{63}}:={C_{62}}/\vartheta>0$ that
	\begin{equation}\label{eq:saddle-tangent-modulus}
		\Gamma(t) \leqslant {C_{63}}\sum_{j=0}^{\infty} \vartheta^j\omega_*(L^jt) =:\widecheck\omega(t).
	\end{equation}
	The same continuity estimate holds for the continuously oriented unit tangent vector fields along the foliations, 	upon choosing ${C_{62}}$ sufficiently large.

		Since $\vartheta L<1$, concavity gives $\omega_*(L^jt)\leqslant L^j\omega_*(t)$; hence
		\begin{equation}\label{eq:saddle-tangent-original-modulus}
			\widecheck\omega(t)\leqslant{C_{64}}\omega_*(t),
			\quad {C_{64}}:=\frac{{C_{63}}}{1-\vartheta L}.
		\end{equation}
		In particular, $\widecheck\omega$ is Dini and the unit tangent fields have modulus $\omega_*$.
	Let $v_s$ and $v_u$ be the continuous unit tangent vector fields to $\mathcal W^s$ and $\mathcal W^u$, normalized so that their first and second components, respectively, remain positive. We define the leaf expansion factors and their logarithmic Jacobians by
	\[
	J^\sigma(p)=|DG(p)v_\sigma(p)|, \quad a^\sigma(p)=\log J^\sigma(p), \quad \sigma\in\{s,u\}.
	\]
	Since $DG$ and $DG^{-1}$ are uniformly bounded, combining~\eqref{eq:saddle-DG-modulus} and~\eqref{eq:saddle-tangent-modulus} provides a constant ${C_{65}}>0$ such that
	\begin{equation}\label{eq:saddle-Jacobian-modulus}
		|a^\sigma(p)-a^\sigma(q)| \leqslant {C_{65}}\overline\omega(|p-q|), \quad \overline\omega:=\omega_*+\widecheck\omega,
	\end{equation}
	which proves in particular that $\overline\omega$ is a Dini modulus of continuity.
	
	We next prove that the holonomies are $C^1$. Consider a stable holonomy $h:U_1\to U_2$ between two unstable plaques inside a small local product box. All constants below remain uniform under perturbation of the plaques within a slightly smaller product structure. In particular, there exists ${C_{66}}>0$ such that
	\begin{equation}\label{eq:saddle-holonomy-separation}
		d_s(p,h(p))\leqslant {C_{66}}, \quad |G^kp-G^kh(p)| \leqslant {C_{66}}\rho^k, \quad k\in \mathbb{N}.
	\end{equation}
	Let $P_n$ be the projection between the complete unstable leaves
	containing $G^nU_1$ and $G^nU_2$
	along horizontal lines, preserving the
	second coordinate.
	This mapping is well defined because the leaves are global graphs
	over the vertical axis. We define the sequence of approximations $h_n(p)=G^{-n}(P_n(G^n(p)))$ on a fixed compact subplaque of $U_1$. For all sufficiently large $n$, the image of $h_n$ falls into a slightly enlarged target plaque. The uniform slope bounds on the leaves combined with~\eqref{eq:saddle-holonomy-separation} yield a constant ${C_{67}}>0$ such that
	\begin{equation}\label{eq:saddle-projection-error}
		d_u(P_n(G^np),G^nh(p))\leqslant {C_{67}}\rho^n.
	\end{equation}
	Applying the second contraction bound in~\eqref{eq:saddle-leaf-contraction} backward along unstable leaves, there exists ${C_{68}}>0$ such that
	\begin{equation}\label{eq:saddle-approx-orbits}
		d_u(G^kh_n(p),G^kh(p)) \leqslant {C_{68}}\rho^{2n-k}, \quad 0\leqslant k\leqslant n,
	\end{equation}
	which immediately gives the uniform convergence $h_n\to h$.
	
	Orienting the unstable leaves by increasing vertical coordinate, each $h_n$ preserves this orientation because the orientation signs of $G^n$ and $G^{-n}$ cancel. Denoting by $Dh_n$ the positive arclength derivative, the chain rule yields
	\begin{equation}\label{eq:saddle-hn-derivative}
		\log Dh_n(p) = \sum_{k=0}^{n-1} \bigl(a^u(G^kp)-a^u(G^kh_n(p))\bigr)+b_n(p),
	\end{equation}
	where $b_n(p)=\log D_{\rm arc}P_n(G^np)$. Setting $z=G^np$, the definition of $P_n$ gives
	\[D_{\rm arc}P_n(z) = \frac{(v_u(z))_2}{(v_u(P_nz))_2}.\]
	Since the vertical components of $v_u$ are bounded away from zero, estimates~\eqref{eq:saddle-projection-error} and~\eqref{eq:saddle-tangent-modulus} yield a constant ${C_{69}}>0$ such that
	\begin{equation}\label{eq:saddle-projection-derivative}
		|b_n(p)| \leqslant {C_{69}}\widecheck\omega({C_{69}}\rho^n).
	\end{equation}
	
	Define
	\[
	A(p)= \sum_{k=0}^{\infty} \bigl(a^u(G^kp)-a^u(G^kh(p))\bigr).
	\]
	By~\eqref{eq:saddle-Jacobian-modulus} and~\eqref{eq:saddle-holonomy-separation}, the $k$-th term of this series is bounded by ${C_{70}}\overline\omega({C_{70}}\rho^k)$ for some ${C_{70}}>0$. The monotonicity of $\overline\omega$ ensures that, for each fixed $a_0>0$,
	\begin{equation}\label{cxgj}
		\sum_{k=n}^{\infty}\overline\omega(a_0\rho^k) \leqslant \frac1{|\log\rho|} \int_0^{a_0\rho^{n-1}} \frac{\overline\omega(t)}t dt \to 0  \quad \text{as }n\to \infty,
	\end{equation}
	so that the series defining $A(p)$ converges uniformly on the plaque.
	
	To compare $\log Dh_n(p)$ with $A(p)$, we employ~\eqref{eq:saddle-approx-orbits} and~\eqref{eq:saddle-Jacobian-modulus} to find a constant ${C_{71}}>0$ satisfying
	\begin{equation}\label{eq:saddle-replacement-error}
		\sum_{k=0}^{n-1} |a^u(G^kh_n(p))-a^u(G^kh(p))| \leqslant {C_{71}}\sum_{j=n+1}^{2n} \overline\omega({C_{71}}\rho^j).
	\end{equation}
	Combining~\eqref{eq:saddle-hn-derivative}, \eqref{eq:saddle-projection-derivative}, \eqref{cxgj}, and~\eqref{eq:saddle-replacement-error}, we deduce that for a sufficiently large constant ${C_{72}}>0$,
	\begin{equation}\label{eq:saddle-holonomy-C1-convergence}
		\sup_p|\log Dh_n(p)-A(p)| \leqslant {C_{72}}\sum_{j=n}^{\infty} \overline\omega({C_{72}}\rho^j) \to 0 \quad \text{as }n\to \infty.
	\end{equation}
	Hence $Dh_n$ converges uniformly to the positive continuous limit $e^A$. Together with the uniform convergence of $h_n$, this establishes that $h$ is a $C^1$ diffeomorphism with derivative
	\[
	Dh(p)= \exp\left( \sum_{k=0}^{\infty} \bigl(a^u(G^kp)-a^u(G^kh(p))\bigr) \right).
	\]
	Because each partial sum depends continuously on both $p$ and the base plaques, and because the tail estimate~\eqref{eq:saddle-holonomy-C1-convergence} is uniform, the derivative $Dh$ varies continuously with respect to both the point and the plaque parameters. An identical argument applied to $G^{-1}$ shows that the unstable holonomies are likewise of class $C^1$, as the logarithmic Jacobian along the expanding leaves of $G^{-1}$ is given by $-a^s(G^{-1}(p))$, which remains Dini continuous.
	
	We now construct the straightening mapping. Let $\gamma_s$ and $\gamma_u$ be arclength parametrizations of the local invariant manifolds through $O$ satisfying $\gamma_s(0)=\gamma_u(0)=O$, $\gamma_s'(0)=e_s:=(1,0)^\top$, and $\gamma_u'(0)=e_u:=(0,1)^\top$. We define the grid mapping
	\[
	\Theta(\xi,\eta)= \mathcal W^u_{\rm loc}(\gamma_s(\xi)) \cap \mathcal W^s_{\rm loc}(\gamma_u(\eta)).
	\]
	For each fixed $\eta$, the curve $\xi\mapsto\Theta(\xi,\eta)$ is obtained by applying an unstable holonomy to $\gamma_s(\xi)$, whereas for each fixed $\xi$, the curve $\eta\mapsto\Theta(\xi,\eta)$ is a stable holonomy applied to $\gamma_u(\eta)$. Therefore, both partial derivatives of $\Theta$ exist and satisfy
\[
\partial_1\Theta(\xi,\eta)=c_s(\xi,\eta)v_s(\Theta(\xi,\eta)), \quad \partial_2\Theta(\xi,\eta)=c_u(\xi,\eta)v_u(\Theta(\xi,\eta))
\]
	for strictly positive continuous functions $c_s$ and $c_u$. This shows that $\Theta$ is jointly of class $C^1$. Furthermore, $\Theta(\xi,0)=\gamma_s(\xi)$ and $\Theta(0,\eta)=\gamma_u(\eta)$, which implies $\Theta(O)=O$ and $D\Theta(O)=\operatorname{Id}$. By the inverse function theorem, $\Theta$ is a local $C^1$ diffeomorphism.
	
Restricting $G$ to the invariant manifolds defines two one-dimensional mappings $f_s,f_u$ by
\[
G(\gamma_\sigma(t))=\gamma_\sigma(f_\sigma(t)),\quad \sigma\in\{s,u\}.
\]
These mappings satisfy
	\[
	f_s(0)=f_u(0)=0,\quad f_s'(0)=\lambda_1,
	\quad f_u'(0)=\lambda_2,
	\]
	and $|f_\sigma'(t)|=J^\sigma(\gamma_\sigma(t))$ for $\sigma\in\{s,u\}$. Since the parametrizations are by arclength and $a^\sigma$ is Dini continuous, each derivative $f_\sigma'$ admits a Dini modulus of continuity and maintains a constant sign near $0$. By foliation invariance and uniqueness of local intersections, the two-dimensional action decouples as $G(\Theta(\xi,\eta)) =\Theta(f_s(\xi),f_u(\eta))$.
	
	The inverse mapping $f_u^{-1}$ also has a Dini continuous derivative since $(f_u^{-1})'(y)=1/f_u'(f_u^{-1}(y))$ with $f_u'$ bounded away from zero. Applying Lemma~\ref{DINILEMMA} to $f_s$ and $f_u^{-1}$ yields local $C^1$ diffeomorphisms $h_s$ and $h_u$ satisfying
	\[
	h_s(f_s(\xi))=\lambda_1h_s(\xi),
	\quad
	h_u(f_u(\eta))=\lambda_2h_u(\eta),
	\quad
	h_s'(0)=h_u'(0)=1.
	\]
Set
\[
\Phi(x):=\begin{pmatrix}h_s(\xi)\\h_u(\eta)\end{pmatrix},
\quad \begin{pmatrix}\xi\\\eta\end{pmatrix}=\Theta^{-1}(x).
\]
Then we obtain
 a local $C^1$ diffeomorphism $\Phi$ satisfying $\Phi(O)=O$, $D\Phi(O)=\operatorname{Id}$, and $\Phi(G(x))=\Lambda\Phi(x)$.
	Since $G$ coincides with $F$ near $O$, this proves the $C^1$ assertion in Item~\ref{TH4-I}.

		We prove the additional regularity using the holonomy formula. Recall that
	$\omega_{\mathrm D}(t)=\omega_{\rm B}(t^\delta)$ is a concave modulus of continuity.
	We first establish bounds with the exact spectral rates
	$|\lambda_1|$ and $|\lambda_2|$.

		Fix $\rho_0\in(\max\{|\lambda_1|,|\lambda_2|^{-1}\},1)$ and require, when choosing the cutoff radius, that
		$\varepsilon_0:=|Df_r|_\infty$ satisfy $\varepsilon_0 C_{73}\leqslant1/2$, where
		\[
		C_{73}:=\frac1{\rho_0-|\lambda_1|}
		+\frac{\rho_0}{1-|\lambda_1|\rho_0}
		+\frac1{|\lambda_2|-\rho_0}
		+\frac{\rho_0}{|\lambda_2|\rho_0-1}.
		\]
		For an orbit segment $p_k=G^kp_0$, $0\leqslant k\leqslant N$, with
		$|p_0|,|p_N|\leqslant R$, variation of constants gives
	\[
	(p_k)_1=\lambda_1^k(p_0)_1+\sum_{j=0}^{k-1}\lambda_1^{k-1-j}(f_r(p_j))_1,\quad (p_k)_2=\lambda_2^{k-N}(p_N)_2-\sum_{j=k}^{N-1}\lambda_2^{k-1-j}(f_r(p_j))_2.
	\]
		For $w_k=\rho_0^k+\rho_0^{N-k}$, summing the four geometric series gives
		\[
		\max_{0\leqslant k\leqslant N}\frac{|p_k|}{w_k}
		\leqslant R+\varepsilon_0 C_{73}
		\max_{0\leqslant k\leqslant N}\frac{|p_k|}{w_k}.
		\]
		Consequently, $|p_k|\leqslant2R(\rho_0^k+\rho_0^{N-k})$.
		Choose $R>0$ so that $\operatorname{supp}Df_r\subset\{|p|\leqslant R\}$.
		Apply this bound between the first and last visits to that support.
		Subadditivity,~\eqref{eq:saddle-cutoff-modulus}, and the Dini condition \eqref{dinicd} yield
		\begin{equation}\label{eq:saddle-distortion-sum}
			\sum_{k=0}^{n-1}|Df_r(G^kp)|
			\leqslant2C_{58}\sum_{j=0}^{\infty}\omega(2R\rho_0^j)
			=:C_{74}<+\infty,
		\end{equation}
		uniformly in $p,n$; zero or one visit satisfies the same bound.
		
		The invariant frame $T(p)=\begin{pmatrix}1&u(p)\\s(p)&1\end{pmatrix}$ satisfies
		\[
		DG(p)T(p)=T(Gp)\operatorname{diag}(\mu_1(p),\mu_2(p)),\quad |\mu_i(p)-\lambda_i|\leqslant(1+\kappa)|Df_r(p)|,\quad i=1,2.
		\]
		Both $T$ and $T^{-1}$ are uniformly bounded. For sufficiently small $r$,
		\[
		\frac{\mu_i(p)}{\lambda_i}>0,
		\quad
		\left|\log\frac{\mu_i(p)}{\lambda_i}\right|
		\leqslant\frac{2(1+\kappa)}{|\lambda_1|}|Df_r(p)|,
		\quad i=1,2.
		\]
		Thus~\eqref{eq:saddle-distortion-sum} bounds the products of these ratios
		above and away from zero. Multiplication in this frame and integration
		along leaves give a constant $C_{75}\geqslant1$ such that
		\begin{equation}\label{eq:saddle-exact-rates}
			\begin{aligned}
				|DG^n(p)|&\leqslant C_{75}|\lambda_2|^n,
				& |DG^{-n}(p)|&\leqslant C_{75}|\lambda_1|^{-n},\\
				d_s(G^np,G^nq)&\leqslant C_{75}|\lambda_1|^nd_s(p,q),
				&q&\in\mathcal W^s(p),\\
				d_u(G^{-n}p,G^{-n}q)&\leqslant C_{75}|\lambda_2|^{-n}d_u(p,q),
				&q&\in\mathcal W^u(p).
			\end{aligned}
		\end{equation}
		
		For $x=\Theta(\xi,\eta)$, put $p_u(x)=\gamma_u(\eta)$.
		This mapping is Lipschitz on a smaller compact product box, and
		\[
		\log c_u(\Theta^{-1}(x))
		=\sum_{k=0}^{\infty}
		\bigl(a^u(G^kp_u(x))-a^u(G^kx)\bigr).
		\]
		Compare corresponding orbits using the growth bound $|\lambda_2|^k$,
		or the two points within each summand using the stable rate $|\lambda_1|^k$.
		Now, \eqref{eq:saddle-tangent-original-modulus}, \eqref{eq:saddle-Jacobian-modulus}
		and~\eqref{eq:saddle-exact-rates}, with concavity absorbing fixed factors, give
		\[
		\begin{aligned}
			\left|\log\frac{c_u(\Theta^{-1}(x))}{c_u(\Theta^{-1}(y))}\right|
			&\leqslant C_{76}\sum_{k=0}^{\infty}
			\min\{\omega(|\lambda_2|^kt),\omega(|\lambda_1|^k)\},\\
			\left|\log\frac{c_s(\Theta^{-1}(x))}{c_s(\Theta^{-1}(y))}\right|
			&\leqslant C_{76}\sum_{k=0}^{\infty}
			\min\{\omega(|\lambda_1|^{-k}t),\omega(|\lambda_2|^{-k})\},
		\end{aligned}
		\]
		where $t=|x-y|$; the second estimate follows by applying the same argument
		to $G^{-1}$.
		
		Split the first sum at
		\[
		N=\left\lfloor\frac{\log(1/t)}
		{\log(|\lambda_2|/|\lambda_1|)}\right\rfloor.
		\]
		Comparison on geometric intervals bounds it by
		\[
		\omega(|\lambda_2|^Nt)
		+\frac1{\log|\lambda_2|}\int_t^{|\lambda_2|^Nt}\frac{\omega(s)}s ds
		+\frac1{\log(1/|\lambda_1|)}\int_0^{|\lambda_1|^N}\frac{\omega(s)}s ds.
		\]
		Since
		\[
		|\lambda_2|^Nt\leqslant|\lambda_1|^N
		\asymp t^{\log(1/|\lambda_1|)/\log(|\lambda_2|/|\lambda_1|)},
		\]
		concavity and $\omega\leqslant\omega_{\rm B}$ bound this expression by
		$C_{77}\omega_{\mathrm D}(t)$.
		Exchanging $|\lambda_1|$ and $|\lambda_2|^{-1}$ gives the same bound for
		the second sum, choosing $C_{77}$ to cover both estimates.
		The formulas for $\partial_1\Theta,\partial_2\Theta$ now give
		$\Theta,\Theta^{-1}\in C_{1,\omega_{\mathrm D}}$, using the inverse argument
		of Section~\ref{PTD}. The axis mappings have derivative modulus $\omega$,
		so Lemma~\ref{DINILEMMA} gives $h_s,h_u\in C_{1,\omega_{\rm B}}$.
		Since $\omega_{\rm B}\leqslant\omega_{\mathrm D}$ near zero, composition yields
		\[
		|D\Phi(x)-D\Phi(y)|\leqslant C_{78}\omega_{\mathrm D}(|x-y|).
		\]
		The inverse has the same modulus of continuity by Section~\ref{PTD}, completing
		the additional assertion in Item~\ref{TH4-I}.

	We now prove Item~\ref{TH4-II}, assuming that~\eqref{dinicd} fails.
	Recall the perturbation $R$ from~\eqref{eq:3.3} in Section~\ref{SEC3}.
	For $\varepsilon>0$, set
\[
F_\varepsilon(x_1, x_2) := \begin{pmatrix} \lambda_1 x_1 \\ \lambda_2 x_2 + \varepsilon \lambda_2 R(x_1, x_2) \end{pmatrix}.
\]
	Lemma~\ref{pro:regularity} gives $F_\varepsilon\in C_{1,\omega}$,
	$F_\varepsilon(O)=O$, and $DF_\varepsilon(O)=\Lambda$; hence it is a local
	diffeomorphism. Since $K(0)=0$ and $K'(0)=1$,
\begin{equation}\label{eq236}
		F_\varepsilon(x,0)=\begin{pmatrix}\lambda_1x\\0\end{pmatrix},
		\quad
		\partial_2(F_\varepsilon)_2(x,0)
		=\lambda_2\bigl(1+\varepsilon\widehat\omega(|x|)\bigr).
	\end{equation}
	Suppose that a local $C^1$ linearization $\Phi$ exists, normalized by
	$\Phi(O)=O$ and $D\Phi(O)=\operatorname{Id}$.
	Differentiating its conjugacy equation on this axis and using~\eqref{eq236}
	give
	\[
	\bigl(1+\varepsilon\widehat\omega(|x|)\bigr)
	\partial_2\Phi_2(\lambda_1x,0)=\partial_2\Phi_2(x,0).
	\]
	Iteration yields
	\begin{equation}\label{cjcj}
		\partial_2\Phi_2(\lambda_1^nx,0)
		=\partial_2\Phi_2(x,0)
		\prod_{k=0}^{n-1}
		\bigl(1+\varepsilon\widehat\omega(|\lambda_1|^k|x|)\bigr)^{-1}.
	\end{equation}
	For a fixed sufficiently small $x\neq0$, geometric-interval comparison
	and Lemma~\ref{prop21} give
	\[
	\sum_{k=0}^{\infty}\widehat\omega(|\lambda_1|^k|x|)
	\geqslant\frac1{2\log(1/|\lambda_1|)}
	\int_0^{|x|}\frac{\omega(t)}t dt=+\infty.
	\]
	Since $\log(1+u)\geqslant u/2$ for small $u\geqslant0$, the product
	in~\eqref{cjcj} tends to zero. Its left-hand side instead tends to
	$\partial_2\Phi_2(O)=1$, a contradiction. This proves Item~\ref{TH4-II}.

		For sharpness under~\eqref{dinicd}, first suppose $|\lambda_1\lambda_2|\leqslant1$,
		so that
		\[
		\delta=\frac{\log|\lambda_2|}{\log(|\lambda_2|/|\lambda_1|)}.
		\]
		Using the same perturbation~\eqref{eq:3.3} with its arguments interchanged, set
\[
F_\varepsilon(x_1, x_2) := \begin{pmatrix} \lambda_1 x_1 + \varepsilon \lambda_1 R(x_2, x_1) \\ \lambda_2 x_2 \end{pmatrix}, \quad \varepsilon > 0.
\]
		For $x_2\neq0$, its first component is
		$\lambda_1x_1[1+\varepsilon\widehat\omega(|x_2|)\chi(x_1/x_2)]$.
		Lemma~\ref{pro:regularity} gives $F_\varepsilon\in C_{1,\omega}$ and
		$DF_\varepsilon(O)=\Lambda$, so the preceding sufficiency argument applies.
		
		Let $\Phi\in C_{1,\eta}$ be any normalized local linearization.
		Choose $\xi,\zeta>0$ so that the rectangle
		$|x_1|\leqslant\xi$, $|x_2|\leqslant\zeta$ lies in its conjugacy domain,
		$|\lambda_1|(1+\varepsilon\omega(\zeta))<1$, and
		$\varepsilon\omega(\zeta)\leqslant1$. Write
		\[
		p_n=(\xi,\lambda_2^{-n}\zeta),
		\quad F_\varepsilon^k(p_n)=(x_{n,k},y_{n,k}),
		\quad 0\leqslant k\leqslant n.
		\]
		These orbit segments remain in the rectangle, with
		$y_{n,k}=\lambda_2^{k-n}\zeta$ and
		$|x_{n,k}|\geqslant|\lambda_1|^k\xi$. Define
	\[
	P_n := \prod_{k=0}^{n-1} \left[1+\varepsilon\widehat\omega(|\lambda_2|^{k-n}\zeta)\chi\left(\frac{x_{n,k}}{y_{n,k}}\right)\right], \quad
	Q := \prod_{j=1}^{\infty} \left(1+\varepsilon\widehat\omega(|\lambda_2|^{-j}\zeta)\right).
	\]
		The Dini condition gives $1\leqslant P_n\leqslant Q<+\infty$, and
		\[
		F_\varepsilon^n(p_n)=(u_n,\zeta),
		\quad u_n=\lambda_1^n\xi P_n,
		\quad |u_n|\asymp|\lambda_1|^n.
		\]
		Iteration on the invariant axes and normalization give
		$\Phi_1(\xi,0)=\xi$ and $\Phi_1(0,\zeta)=0$.
		For $h(y)=\partial_1\Phi_1(0,y)$, the differentiated conjugacy equation is
		\[
		h(\lambda_2y)\bigl(1+\varepsilon\widehat\omega(|y|)\bigr)=h(y),
		\quad h(0)=1.
		\]
		Backward iteration therefore gives $h(\zeta)=Q^{-1}$, whence
		\[
		\Phi_1(u_n,\zeta)=Q^{-1}u_n+\mathcal O(|u_n|\eta(|u_n|)),
		\quad \Phi_1(p_n)=\xi+\mathcal O(|\lambda_2|^{-n}).
		\]
		Substitution into $\Phi_1(F_\varepsilon^np_n)=\lambda_1^n\Phi_1(p_n)$ yields
		a constant $C_{79}\geqslant1$, independent of $n$, such that
		\begin{equation}\label{eq:saddle-sharp-entry-exit}
			0\leqslant1-P_n/Q
			\leqslant C_{79}\bigl(|\lambda_2|^{-n}
			+\eta(C_{79}|\lambda_1|^n)\bigr).
		\end{equation}
		
		For sufficiently large $n$, let
		\[
		m_n=\left\lfloor
		\frac{n\log|\lambda_2|+\log(\xi/(2\zeta))}
		{\log(|\lambda_2|/|\lambda_1|)}
		\right\rfloor.
		\]
		For $0\leqslant k\leqslant m_n$, the ratio
		$|x_{n,k}/y_{n,k}|$ is at least $2$, so the corresponding cutoff vanishes.
		Comparing the positive products, and using Lemma~\ref{prop21}, gives
		\[
		\begin{aligned}
			\log(Q/P_n)
			&\geqslant\sum_{j=n-m_n}^{\infty}
			\log\left(1+\varepsilon\widehat\omega(|\lambda_2|^{-j}\zeta)\right)\\
			&\geqslant\frac\varepsilon4
			\sum_{j=n-m_n}^{\infty}\omega(|\lambda_2|^{-j}\zeta)\\
			&\geqslant\frac\varepsilon{4\log|\lambda_2|}
			\omega_{\rm B}(|\lambda_2|^{m_n-n}\zeta).
		\end{aligned}
		\]
		Since $1-P_n/Q\geqslant Q^{-1}\log(Q/P_n)$ and
		$|\lambda_2|^{m_n-n}\zeta\asymp|\lambda_1|^{n\delta}$,
		concavity yields a constant $C_{80}>0$, independent of $n$, satisfying
		\begin{equation}\label{eq:saddle-sharp-product-tail}
			1-P_n/Q\geqslant C_{80}^{-1}\omega_{\mathrm D}(|\lambda_1|^n).
		\end{equation}
		Concavity and positivity give $t=\mathcal O(\omega_{\rm B}(t))$, while
		\[
		\frac{|\lambda_2|^{-n}}{|\lambda_1|^{n\delta}}
		=\exp\left(-\frac{n(\log|\lambda_2|)^2}
		{\log(|\lambda_2|/|\lambda_1|)}\right)\to0.
		\]
		Thus the term $|\lambda_2|^{-n}$ can be absorbed in
		\eqref{eq:saddle-sharp-entry-exit} and \eqref{eq:saddle-sharp-product-tail}, giving
		\[
		\omega_{\mathrm D}(|\lambda_1|^n)
		\leqslant C_{81}\eta(C_{79}|\lambda_1|^n).
		\]
		Interpolation between consecutive scales $C_{79}|\lambda_1|^n$, as in the
		Jordan case, proves $\omega_{\mathrm D}(t)=\mathcal O(\eta(t))$.
		The symmetry of $\chi$ makes the argument valid for either sign of each eigenvalue.
		
		If $|\lambda_1\lambda_2|>1$, apply the construction to the pair
		$(\lambda_2^{-1},\lambda_1^{-1})$, obtaining $H$, and set
\[
F:=S\circ H^{-1}\circ S,\quad
S(x_1,x_2):=\begin{pmatrix}x_2\\x_1\end{pmatrix}.
\]
		Inversion preserves $C_{1,\omega}$, and a normalized $C_{1,\eta}$ linearization
		$\Phi$ of $F$ induces one of $H$ via $S\circ\Phi\circ S$.
		The corresponding exponent is
		\[
		\frac{\log(1/|\lambda_1|)}{\log(|\lambda_2|/|\lambda_1|)}=\delta,
		\]
		which proves the remaining case.

	This completes the proof of Theorem~\ref{TH4}.
\end{proof}

	\subsection{Proof of Theorem \ref{THNH}}
	
	\begin{proof}
		Throughout this subsection, $|\cdot|$ denotes the Euclidean norm on $\mathbb{R}^2$. Define
\[
		H(x):=(1+|x|^2)\begin{pmatrix}x_1\\x_2\end{pmatrix},\quad F(x):=\Lambda H(x).
		\]
		The mapping $H$ preserves each ray emanating from the origin $O$, acting on the radial coordinate $t\geqslant 0$ by $t\mapsto t+t^3$, which is a real-analytic bijection of $[0,\infty)$ onto itself. Consequently, $H$ is a global bijection. A direct calculation yields the Jacobian matrix
		\[
		DH(x)=(1+|x|^2)I+2xx^\top,
		\]
		whence $\det DH(x)=(1+|x|^2)(1+3|x|^2)>0$ for all $x\in\mathbb{R}^2$. By the real-analytic inverse function theorem, $H$ is a real-analytic diffeomorphism of $\mathbb{R}^2$. It follows that $F$ is a global real-analytic diffeomorphism with polynomial components of degree at most three, satisfying $F(O)=O$ and $DF(O)=\Lambda$.
		
		Let $\sigma(\Lambda)$ denote the spectrum of $\Lambda$. Since $\sigma(\Lambda)$ is finite, we may choose $r>0$ such that
		\begin{equation}\label{rxq}
			(1+r^2)|\lambda|<1,\quad\forall \lambda\in\sigma(\Lambda)\text{ with }|\lambda|<1.
		\end{equation}
		In particular, no eigenvalue of $\Lambda$ has modulus in $[(1+r^2)^{-1},1)$.
		
		We claim that there exists no nonempty compact set $K\subset\{x\in\mathbb{R}^2:0<|x|<r\}$ such that $F(K)=K$. Suppose, to the contrary, that such a set $K$ exists, and set $\delta:=\min_{x\in K}|x|>0$. For any $x\in K$, all forward iterates remain in $K$. An immediate induction gives
		\[
		F^n(x)=\left(\prod_{j=0}^{n-1}(1+|F^j(x)|^2)\right)\Lambda^n x,\quad n\in\mathbb{N}^+.
		\]
		Since $\delta\leqslant |F^j(x)|\leqslant r$ for each $j\geqslant 0$, we deduce that
		\[
		\frac{\delta}{(1+r^2)^n}\leqslant |\Lambda^n x|\leqslant \frac{r}{(1+\delta^2)^n},\quad n\in\mathbb{N}^+.
		\]
		By the Jordan canonical form of $\Lambda$, the limit
		\[
		\rho_x:=\lim_{n\to\infty}|\Lambda^n x|^{1/n}
		\]
		exists and equals the modulus of an eigenvalue of $\Lambda$. Taking $n$-th roots in the preceding inequalities and passing to the limit as $n\to\infty$ yield
		\[
		\frac{1}{1+r^2}\leqslant\rho_x\leqslant\frac{1}{1+\delta^2}<1,
		\]
		contradicting the choice of $r$ in \eqref{rxq}. This establishes the claim.
		
		On the other hand, since $\sigma(\Lambda)\cap S^1\neq\varnothing$, every neighborhood of $O$ contains a nonempty compact $\Lambda$-invariant set disjoint from $\{O\}$. Indeed, an eigenvector corresponding to the eigenvalue $1$ provides a nonzero fixed point, whereas one corresponding to $-1$ yields a two-point periodic orbit; if $\Lambda$ possesses nonreal eigenvalues of modulus $1$, it is conjugate over $\mathbb{R}$ to a rotation, thereby preserving a family of concentric ellipses disjoint from $\{O\}$. By linearity, scaling these sets by arbitrarily small positive factors yields such invariant sets in any prescribed neighborhood of $O$.
		
		Now suppose that $F$ is locally topologically linearizable at $O$, namely, there exists a homeomorphism $\Phi:U\to V$ between open neighborhoods of $O$ with $\Phi(O)=O$ such that $\Phi(F(x))=\Lambda\Phi(x)$ near $O$. Choose an open neighborhood $W$ of $O$ such that
		\[
		W\subset U\cap\{x\in\mathbb{R}^2:|x|<r\},\quad F(W)\subset U,
		\]
		and the conjugacy relation holds identically on $W$. Choose a nonempty compact $\Lambda$-invariant set $K_0\subset\Phi(W)\setminus\{O\}$ and set $K:=\Phi^{-1}(K_0)$. Then $K$ is a nonempty compact subset of $\{x\in\mathbb{R}^2:0<|x|<r\}$ satisfying $K\subset W$. The conjugacy equation then gives
		\[
		F(K)=\Phi^{-1}(\Lambda K_0)=\Phi^{-1}(K_0)=K,
		\]
		which contradicts the claim.
		
		The proof of Theorem~\ref{THNH} is now complete.
	\end{proof}

	\section*{Acknowledgements} 
	Z. Tong  was supported by the National Natural Science Foundation of China (Grant No. 12601347) and the China Postdoctoral Science Foundation (Grant No. 2025M783102). Y. Li was supported in part by the National Natural Science Foundation of China (Grant Nos. 12471183 and 12531009).

\end{document}